\newcommand{\ppm}[1]{\textcolor{red}{#1}}
\newcommand{\ppmx}[1]{}
\newcommand{\ppmm}[1]{#1}
\newcommand{\ecr}[1]{\textcolor{orange}{#1}}
\newcommand{\ft}[1]{\textcolor{purple}{#1}}
\newcommand{\ftx}[1]{}
\newcommand{\PI}[1]{\Pi^{N}}
\newcommand{\Match}{{\mathsf{Match}}}
\newcommand{\Matcha}{{\mathsf{Matcha}}}
\newcommand{\Matchg}{{\mathsf{Matchg}}}
\newcommand{\PermMat}{{\mathsf{PermMat}}}
\newcommand{\MonMat}{{\mathsf{MonMat}}}
\newcommand{\Invol}{{\mathsf{Invol}}}
\newcommand{\MonFun}{{\mathsf{MonFun}}}
\newcommand{\MnFun}{{\mathsf{MnFun}}}
\newcommand{\Fun}{{\mathsf{Fun}}}
\newcommand{\GTMat}{{\mathsf{GTMat}}}
\newcommand{\UMat}{{\mathsf{UMat}}}
\newcommand{\OMat}{{\mathsf{OMat}}}
\newcommand{\id}{\mathrm{id}}
\newcommand{\Id}{\mathrm{Id}}
\newcommand{\End}{\mathrm{End}}
\newcommand{\Aut}{\mathrm{Aut}}
\newcommand{\HomYB}{\Hom_{YB}}
\newcommand{\EndYB}{\End_{YB}}
\newcommand{\AutYB}{\Aut_{YB}}
\newcommand{\Bcat}{\mathsf{B}}
\newcommand{\Mat}{\mathsf{Mat}} 
\newcommand{\Sym}{\Sigma} 
\newcommand{\F}{\mathsf{F}}  
\newcommand{\X}{\mathsf{Y}} %
\newcommand{\FF}{\mathsf{F}}  
\newcommand{\N}{\mathbb{N}}   
\newcommand{\unit}{\textbf{1}}
\newcommand{\One}{\unit}
\newcommand{\ul}[1]{\underline{#1}}
\newcommand{\ignore}[1]{}
\newtheorem{theorem}{Theorem}[section]
\newtheorem{proposition}[theorem]{Proposition}
\newtheorem{lemma}[theorem]{Lemma}
\newtheorem{corollary}[theorem]{Corollary}
\newtheorem{conjecture}[theorem]{Conjecture}
\theoremstyle{definition}
\newtheorem{defin}[theorem]{Definition}
\newtheorem{example}[theorem]{Example}
\newtheorem{remark}[theorem]{Remark}
\newtheorem{question}[theorem]{Question}
\newtheorem{problems}[theorem]{Strategy}
\newtheorem{para}[theorem]{}
\newcommand{\beq}{\begin{equation}}
\newcommand{\eq}{\end{equation}} 
\newcommand{\Ob}{\mathrm{Ob}}
\newcommand{\C}{{\mathbb C}}
\newcommand{\Cc}{\mathcal{C}}
\newcommand{\Dd}{\mathcal{D}}
\newcommand{\cO}{\mathcal{O}}
\newcommand{\mat}{\begin{bmatrix}}
\newcommand{\tam}{\end{bmatrix}} 
\newcommand{\matt}[1]{\left[ \begin{array}{#1}}
\newcommand{\ttam}{\end{array} \right]}
\newcommand{\Hom}{\mathrm{Hom}}
\newcommand{\bra}[1]{\langle #1 |} 
\newcommand{\ket}[1]{| #1 \rangle}
\newcommand{\aaa}{{\mathsf{a}}}
\newcommand{\two}{\square\!\square}
\newcommand{\oneone}{\!\!\begin{array}{c} \square \vspace{-.081in}\\ 
\vspace{-.1in}\square \vspace{.08in} \end{array}\!\!}
\newcommand{\oneonex}{\!\!\begin{array}{c} \square \vspace{-.081in}\\ 
\vspace{-.1in}\blacksquare \vspace{.08in} \end{array}\!\!}
\newcommand{\smat}{\left(\begin{smallmatrix}}
\newcommand{\stam}{\end{smallmatrix}\right)}
\newcommand{\otimesb}{\otimes}
\newcommand{\footnotex}[1]{}  
\newcounter{minidef}[section]
\newcommand{\mdef}{\refstepcounter{theorem} 
\medskip \noindent ({\bf \thetheorem}) }
\newcounter{minicapt}
\newcommand{\ob}{Obj}  
\newcommand{\catC}{{\mathcal C}}
\newcommand{\catT}{{\mathcal T}}
\newcommand{\sss}{\mathsf{s}}  
\newcommand{\ttt}{\mathsf{t}}
\newcommand{\Matop}{{\mathsf{M\overline{a}t}}}     
\newcommand{\optimes}{\overline{\otimes}}
\newcommand{\Mmm}{\mathcal{M}}  
\newcommand{\Farr}{\overline{\F}}   
\newcommand{\Fff}{\mathcal{T}}   
\newcommand{\PP}{\mathtt{P}}    
\newcommand{\Jarmo}{\mathfrak{J}}  
\newcommand{\Bb}{{\mathcal{B}}}    
\newcommand{\CDd}{{\mathcal{C}}}   
\newcommand{\otimess}[1]{\stackrel{#1}{\otimes}}
\newcommand{\YangBaxterobject}{Yang-Baxter object}
\newcommand{\YBo}{\YangBaxterobject} 
\newcommand{\YB}{YB}   
\newcommand{\MoonFun}{{\mathsf{MoonFun}}}  
\newcommand{\mop}{\otimes\circ}    
\newcommand{\Bcatmop}{\Bcat^{\mop}}  
\newcommand{\YBone}{YB^1}
\newcommand{\Fio}{\F_{\iota}}  
\newcommand{\Feta}{\F_{\zeta}}  
\newcommand{\Fox}{\F_{\mathsf{ox}}}  
\title{
A Categorical Perspective on Braid Representations
}
 \author{Paul P. Martin $^1$}  
 \address{$^1$ University of Leeds}
\address{$^2$Texas A\&M University}
\address{$^3$ University of Bristol}
\author{Eric C. Rowell $^{1,2}$}
\author{Fiona Torzewska $^3$}
\date{\today}
\thanks{The work of PPM was partially supported by the EPSRC Programme Grant 
EP/W007509/1. 
The research of E.C.R. was partially supported by a Royal Society Wolfson Visiting Fellowship and US NSF grant DMS-2205962.  }
\begin{document}

\begin{abstract} 

    We study categories 
   whose objects are the {\em braid representations}, that is, 
strict monoidal functors 
$F : \Bcat \rightarrow \Mat$    
from the braid category $\Bcat$ to the category of matrices $\Mat$. 
{Braid representations are equivalent to} 
solutions to the (constant) Yang-Baxter equation. 
    So their classification problems are also equivalent.  
In either case classification is up to a suitable notion of isomorphism, 
so a major 
part of the
contribution here is to  
introduce, compare, and contrast suitable notions of isomorphism. 
    We consider both the category $\MoonFun(\Bcat,\Mat)$ whose morphisms are all natural transformations and the category $\MonFun(\Bcat,\Mat)$ whose morphisms are the \emph{monoidal} natural transformations.
    A significant contribution here is
    an extensive range of key examples and counterexamples.
    
This 
categorical contextualisation naturally gives a three-fold focus to the problem:
the source $\Bcat$; the target $\Mat$; 
and the natural transformations 
and other symmetries 
between functors between them. 
Indeed our approach / categorical contextualisation 
    is mainly motivated by the recent classification of charge conserving Yang-Baxter operators, in which 
the target 
    $\Mat$ is replaced by the subcategory $\Match^N$. 
One objective is to understand from the categorical 
perspective how the 
classification  
was 
facilitated  by this 
change (with the aim of generalising).  
{Progress towards this objective is made here by observing that  $\MonFun(\Bcat,\Mat)$ is itself a monoidal category, with monoidal product on braid representations given by the \textit{lashing product} (Theorem \ref{th:YBMonoidal}). In addition, we introduce the notion of sub and quotient objects, proving that an object that is both sub and quotient corresponds to an endomorphism in $\MonFun(\Bcat,\Mat)$ (Theorem \ref{thm:endoissub/quo}). 
We also observe that  objects with target $\Match^N$ always have sub and quotient objects. This monoidal category 
$\MonFun(\Bcat,\Mat)$ leads us to consider monoidal subcategories whose objects share a given property, giving rise to a new way to see how group-type and involutive solutions, for example, fit into our framework.}

On the question of appropriate notions of equivalence - the restriction of target introduces new notions such as that of inner and outer equivalences.
%
{An objective} is to understand how universal such a restricted target is (does it give a transversal of all equivalence classes for a suitable notion of equivalence?).
{Here we 
give various properties exposing the implications of different choices of equivalence and describe some relationships among them (Theorem \ref{thm: outer auto match}, Theorem \ref{th:ds is infinity}, Conj. \ref{conj: Matcha outer}, Conj. \ref{conj: X is natural}).} 
And then the aim is to complete the classification with a suitably universal target. 

    \ignore{\ft{Keep this para?} With regard to the first part of the triple-focus, the category $\Bcat$, we review several features. Work has already been done to generalise to similar categories such as the loop braid category. But a significant part of our input here is to 
    recast $\MonFun(\Bcat,-)$ as the isomorphic category of Yang-Baxter objects - which manoeuvre leans heavily on the source being $\Bcat$ on the nose. \ignore{{Our main result here is \ref{where?}. }}}

Another fundamental question is: why 
braid representations are taken to be  
{\em strict} monoidal functors, as opposed to arbitrary monoidal functors? 
A key result of this paper is Th.\ref{th:strictifyingmonfuns}, which shows that, 
in a 
`sufficiently free'  
setting including our case, every monoidal functor is equivalent to a strict one.

\ignore{
   \bigskip  \vspace{1in}  \ppm{[Take 0:]}

    We study categories 
   whose objects are the {\em braid representations}, that is, 
strict monoidal functors 
$F : \Bcat \rightarrow \Mat$    
from the braid category $\Bcat$ to the category of matrices $\Mat$. 
{Braid representations are equivalent to} 
solutions to the (constant) Yang-Baxter equation. 
    So their classification problems are also equivalent.  
In either case classification is up to a suitable notion of isomorphism, 
so a major 
\ppm{part of the}
contribution here is to  
introduce, compare and contrast suitable notions of isomorphism. 
    We consider both the category $\MoonFun(\Bcat,\Mat)$ whose morphisms are natural transformations and the category $\MonFun(\Bcat,\Mat)$ whose morphisms are the \emph{monoidal} natural transformations.
This 
categorical contextualisation naturally gives a three-fold focus to the problem:
the source $\Bcat$; the target $\Mat$; 
and the natural transformations 
and other symmetries 
between functors between them. 
Indeed the approach  
    is mainly motivated by the recent classification of charge conserving Yang-Baxter operators, in which 
the target 
    $\Mat$ is replaced by a certain subcategory. 
One objective is to understand from the categorical 
perspective how the 
classification  
was 
facilitated  by this 
change (with the aim of generalising).  

{Another key objective 
is to understand appropriate notions of equivalence - the restriction of target introduces new notions such as that of inner and outer equivalences.}
And another is to understand how universal such a restricted target is (does it give a transversal of all equivalence classes for a suitable notion of equivalence?).
And then the aim is to complete the classification with a suitably universal target. 

    With regard to the first part of the triple-focus, the category $\Bcat$, we review several features. Work has already been done to generalise to similar categories such as the loop braid category. But a significant part of our input here is to 
    recast $\MonFun(\Bcat,-)$ as the isomorphic category of Yang-Baxter objects - which manoeuvre leans heavily on the source being $\Bcat$ on the nose. \ignore{{Our main result here is \ref{where?}. }}

Another fundamental question is: why 
braid representations are taken to be  
{\em strict} monoidal functors, as opposed to arbitrary monoidal functors? 
A key result of this paper is Prop.\ref{pr:strictifyingmonfuns}, which shows that, 
in a 
`sufficiently free'  
setting including our case, every monoidal functor is equivalent to a strict one. 
    
   \ppm{One of the other significant contributions here is an extensive range of key examples and counterexamples.}}
\end{abstract}
\ignore{{ 
\begin{abstract}    We study the category $\MonFun(\Bcat,\Mat)$ 
   whose objects are the {\em braid representations}, that is, 
    strict monoidal functors from the braid category $\Bcat$ to the category of matrices $\Mat$.
    {Braid representations are equivalent to } \sout{for the purpose of
    classifying families of} solutions to the (constant) Yang-Baxter equation. 
    So their classification problems are also equivalent. 
    The morphisms in $\MonFun(\Bcat,\Mat)$ are the natural transformations between braid representations. 
    This 
    categorical contextualisation naturally gives a three-fold focus to the problem:
    the source $\Bcat$; the target $\Mat$; and the natural transformations between functors between them. 
    Indeed the approach  
    is mainly motivated by the recent classification of charge conserving Yang-Baxter operators, in which $\Mat$ is replaced by a subcategory. 
 One objective is to understand from the categorical 
 perspective how the solution was 
    facilitated  by this 
    change.  
    Another is to understand how universal such a restricted target is (does it give a transversal of all equivalence classes for a suitable notion of equivalence?).
    And then the aim is to complete the classification with a suitably universal target. 
\end{abstract}
}}
\maketitle
\newpage 
\tableofcontents

\newpage 

\section*{Glossary}

Here we use the following notations (explained further later in the paper):
\\
If $\catC$ is a category then $\catC^\circ$ is the opposite category. 
\\
If $\catC$ is a strict monoidal category then $\catC^{\otimes\circ}$ is the {\em monoidal} opposite category. 
\\
If $\catC$ is a category and $k$ a commutative ring, then $k\catC$ is the $k$-linearisation of $\catC$. 

\medskip \medskip

\newcommand{\glosss}[2]{$#1$ & #2  \\ }  

\begin{tabular}{l|l}
\glosss{\Sym}{strict monoidal category of symmetric groups (see \ref{de:Sym})} 
\glosss{\Bcat}{  strict monoidal category of braids {(see \ref{de:Bcat})}}  
\glosss{\Bcatmop}{category $\Bcat$ but with opposite monoidal product}
$\Mat$      &  strict monoidal category of matrices over given ring \\ & \hspace{.3cm} with Ab convention Kronecker product \\
\glosss{\Matop}{category $\Mat$ but with aB Kronecker product}
\glosss{\MonMat}{category of monomial matrices m{(see \ref{def:monomial/perm})}}
\glosss{\PermMat}{category of permutation matrices {(see \ref{def:monomial/perm})}}
\glosss{\GTMat}{set of group-type matrices (see \ref{def:group-type})}
\glosss{\Invol}{set of involutive matrices (see Def. \ref{de:invol})}
$\Mat^N$     & full subcategory of $\Mat$ monoidally generated by object $N\in \N$ \\
\glosss{\Matcha^N}{
subcategory of $\Mat^N$ of additive charge conserving matrices (see \ref{de:Matcha})}
\glosss{\Match^N}{ 
subcategory of $\Matcha^N$ {of charge conserving matrices (see \ref{de:Match})}} 
\glosss{\Matchg^N}{subcategory of $\Mat^N$ of cc-with-glue matrices (see \ref{de:ccwg})}
& \\
\glosss{\MnFun(A,B)}{category of monoidal functors  $F:A\rightarrow B$ and natural transformations }
\glosss{\MoonFun(A,B)}{  category of strict monoidal functors 
$F:A\rightarrow B$ and natural transformations }

\glosss{\MoonFun^N(\Bcat,\catC)}{subcat of $\MoonFun(\Bcat,\catC)$ of functors with $F(1)=N$} 
$\MonFun(A,B)$   &  subcategory of $\MoonFun(A,B)$ of monoidal natural transformations \\ 
\glosss{YB(\catC)}{synonym for $\MonFun(\Bcat,\catC)$}
\glosss{YB^a(\X)}{full subcategory of all $(a,R)\in YB(\X)$ for $\X\subset\Mat^N$}
& \\
\glosss{\boxtimes}{lashing product (see \ref{de:lashing})}
$\Fio :\Bcat\rightarrow \Bcat$  &  strict monoidal functor (SMF) flipping braid over/under conventions \\ 
\glosss{\Fox:\Bcat\rightarrow \Bcatmop}{SMF flipping braids laterally (see \ref{pa:Bflipp})}
$\Feta :\Bcat\rightarrow \Bcat^\circ$  &  SMF flipping braids 
vertically (see \ref{pa:Bopp}) \\  
\glosss{\Mmm_n}{$n$-strand ribbon half-twist braid (see \ref{pa:Bflipp})}
\glosss{\Fff_n}{for fixed $N$, the level-$n$ `flip' matrix (see \ref{de:flip})}
\glosss{\PP}{for fixed $N$, $\PP=\Fff_2$}
\glosss{\overline{F}:\Mat\rightarrow\Matop}{SMF swapping Kronecker convention (see \ref{de:Farr} and \cite{MR1X})}
\glosss{}{} 
\end{tabular}

\medskip

\section{Introduction}\label{S:intro}

{There has been interest}
in braids and their relevance to 
understanding 
the physical universe for millennia (see e.g. \cite{Olivelle98,Epple98} for examples from the 7th century B.C.E. philosopher G\=arg\={\i} V\=achaknav\={\i} and the 19th century C.E. mathematician Carl Friedrich Gauss). Mathematically, braids on $n$ strands form a group $B_n$.
{It is known 
\cite{Artin}
that this group is isomorphic to a group} defined abstractly via generators $\sigma_1,\ldots,\sigma_{n-1}$ satisfying relations
\begin{enumerate}
    \item[B1] $\sigma_i\sigma_{i+1}\sigma_i=\sigma_{i+1}\sigma_i\sigma_{i+1}$ for $1\leq i\leq n-2$
    \item[B2] $\sigma_{i}\sigma_j=\sigma_{j}\sigma_i$ for $|i-j|\neq1$.
\end{enumerate}  
(the isomorphism takes one of the elementary exchanges of the first two strands to $\sigma_1$, and so on). 
An important manifestation of braids in 
{computational}  
physics is through the constant Yang-Baxter equation: \beq   
\label{eq:YBE0}
(R \otimes I) (I\otimes R )(R \otimes I) 
= (I\otimes R ) (R \otimes I) (I\otimes R )
\eq 
where $R$ is an invertible operator on $V^{\otimes 2}$ for some finite dimensional complex vector space $V$, and $I=\Id_V$ 
{(and we have passed to the usual strictification $V^{\otimes 3}$)}.  
In practice $V=\C^N$ for some $N$, so we can fix the standard ordered basis, and then $R$ is a matrix. 
Originally, such Yang-Baxter operators $R$ were used in statistical mechanics to describe the interaction of particles in 1+1d physical systems
{with computationally favourable scattering properties}
(see e.g. \cite{LiebMattis,Baxter82}).  
Moreover, given such an $R$ one obtains, for each $n\in\N$, a representation $\rho_n^R$ of the braid group $B_n$ on $n$ strands, 
{on $V^{\otimes n}$,}
by defining 
\beq 
\label{eq:rhorep}
\rho_n^R(\sigma_i)=I_V^{\otimes i-1}\otimes R\otimes I_V^{\otimes n-i-1}  .
\eq 
{A direct connection to low-dimensional topology was made in \cite{Turaev1988}, showing how to obtain  link invariants from a given Yang-Baxter operator.}

Over time, 
{aspects of} category theory have been embraced by physicists 
- see e.g. \cite{Martin91,Nayaketal,gainutdinov2016fusionbraidingfiniteaffine,ZiniWang2018}.\footnote{Although somewhat unenthusiastically at first, as Moore and Seiberg \cite{MooreSeiberg89} describe category theory as ``an esoteric subject noted for its difficulty and irrelevance." }  One key reason that category theory is an appropriate framework for physics was observed by Kapranov and Voevodsky \cite{KapranovVoevodsky}: \emph{In any category, it is unnatural and
undesirable to speak about equality
of two objects}--just as different particles are never equal, but can 
{usefully}
be regarded as equivalent 
{if having equal}
responses to  {certain} measurements. 
More specifically, for example, Morita equivalence in category theory can be seen as a sister to the phenomenon of the thermodynamic limit (equivalencing different large systems) in computational statistical mechanics. 

Computations in quantum physics are typically linear algebraic, so that the category $\Mat$ of matrices is of particular utility. This is a category whose objects are natural numbers $n\geq 1$ and the morphisms from $m$ to $n$ 
can be taken to be  
$n\times m$ matrices. 
The category $\Mat$ has a natural strict monoidal structure: on objects this is  multiplication and on morphisms it is the Kronecker product. 
{This corresponds to independent event probabilities being composed multiplicatively.}
Thus categories provide a structure in which to `do' physics.  

Incorporating braids into this framework is facilitated by the braid category $\Bcat$  \cite{MacLane}.  This is the category whose objects are natural numbers $k\geq 0$ with morphisms from $k$ to $k$ consisting of the braid group $B_k$ on $k$ strands, and no morphisms between distinct $k_1\neq k_2$.  This category is strictly monoidal, with tensor product on objects given by $+$ and on morphisms by {a chosen} juxtaposition of braids.

{Observe in particular  that the category $\Bcat$ is monoidally generated by the object 1 and an elementary braid  $\sigma \in \Bcat(2,2)$ 
(and its inverse).
Let $\catC$ be a strict monoidal category. It follows that a strict  monoidal functor $F:\Bcat \rightarrow \catC$ is completely determined by the value of $F(1) $ in $Ob(\catC)$, $F(1)=c$ say, and invertible $F(\sigma)$ in $\catC(c\otimes c,c \otimes c)$.
Conversely a necessary condition for  a formal assignment of an $F(1)$ and $F(\sigma)$ to yield a {strict} monoidal functor is that $F(\sigma)$ obeys the corresponding form of (\ref{eq:YBE0}). 
Indeed it follows from the Artin presentations of the braid groups {given in B1, B2 above} that this condition is also sufficient.
}

\ignore{{
Given a Yang-Baxter operator $R\in\Aut(V^{\otimes 2})$ for a vector space $V$ 
and a choice of an ordered basis $[v_1,\ldots,v_N]$ for $V$ we choose a ordered basis for $V^{\otimes 2}$ given by $[v_1\otimes v_1,v_2\otimes v_1,\ldots,v_N\otimes v_1,v_1\otimes v_2,\ldots,v_N\otimes v_N]$
\ppm{[-I don't understand. I think that for me what was here before was better.]} }}

For any $N^2\times N^2$ matrix solution $R$ to the Yang-Baxter equation \eqref{eq:YBE0}
we obtain a functor $F_R:\Bcat\rightarrow \Mat$ 
by setting $F_R(1)= N$
and setting $F_R(\sigma)=R$.
Of course it follows that  
 $F_R(k)=N^k$
and  $F_R(\beta)=\rho_n^R(\beta)$ for $\beta\in B_n$.
{From (\ref{eq:rhorep})}
the functor $F_R$ has an additional feature: it is a (strict) monoidal functor%
--\emph{i.e.} we have $F_R(j)\cdot F_R(k)= F_R(j+k)$. 
{Such a functor $F: \Bcat\rightarrow\Mat$ is called a {\em braid representation}. So classifying matrix solutions to the Yang--Baxter equation is equivalent to classifying braid representations.}

 Classifying matrix solutions to the Yang-Baxter equation is a problem noted for being easily formulated yet impervious to direct methods of solution.  
 And recasting solutions as monoidal functors, and then perhaps even relaxing the 
 strictness requirement, does not, of itself, make the problem any easier. 
 {Without some controlling notion of equivalence classes, classification would amount to giving a construction for every $R$-matrix --- certainly an overwhelming amount of information in general.}
 A complete enumeration of solutions up to some dimension without further context would 
 {therefore}
 not likely be of widespread interest--a more useful result should incorporate  natural equivalences and symmetries, and then might further stratify the solutions into natural families. 
 An application of the Polya principle, informed by the discussion above, suggests that we should generalise the problem by studying the 
 {\em category} 
 $\MnFun(\Bcat,\Mat)$ 
 of monoidal functors from $\Bcat$ to $\Mat$ {and natural transformations between them}.
 The additional structures afforded by taking this perspective may yield such stratifications while providing a useful context in which to study the original problem.  We also see potential for generalising this approach to functors from $\Bcat$ to less familiar targets, with the goal of understanding $\Bcat$ more deeply by viewing it through a diversity of lenses. 
 
 An immediate reduction is to only consider the subcategory of \emph{strict} monoidal functors, which we will denote by $\MoonFun(\Bcat,\Mat)$.  This is justified 
by the fact (proved in Theorem \ref{th:strictifyingmonfuns}) that 
{every} 
monoidal functor from $\Bcat$ to $\Mat$ is naturally isomorphic to a strict one 
(although the isomorphism is not itself monoidal in general).  
To specify an $F\in\MoonFun(\Bcat,\Mat)$ is the same thing as providing a matrix solution to the Yang-Baxter equation--so  
{this is, up to a choice of basis, the problem we started with}.
{Except that now of course it is cast as (higher) representation theory.}
It is also fairly natural to exclude morphisms that do not preserve monoidal structure, leading to the subcategory $\MonFun(\Bcat,\Mat)$ of $\MoonFun(\Bcat,\Mat)$ with \emph{monoidal} natural transformations.  This restriction is structurally and computationally 
simplifying of certain aspects of the problem,  
and we develop much of our theory in this setting.

\ignore{{, despite the adverse representation theoretic consequences. \ft{[I am not sure we can claim adverse consequences without any explanation...]}\ecr{[agree.  This feels like editorialising, and doesn't strengthen the paper.  My vote is that we end the sentence at "setting."    ]}}}

\newcommand{\sproblem}{strategy}
\newcommand{\sproblems}{strategies}

Casting
the problem in this categorical framework leads naturally to 
strategies such as 
 restricting the targets of  functors $F:\Bcat\rightarrow \Mat$  to 
 special families of matrices.  
 For example, we may consider monoidal subcategories of $\Mat$, as in Figure \ref{fig:catinclusions} (see Section \ref{ss:subcats} for notation). Indeed, the main inspiration for this article is the recent work \cite{MR1X} classifying $N^2\times N^2$ charge conserving solutions to the Yang-Baxter for all $N$ using this categorical perspective (i.e. functors $F\colon \Bcat \to \Match^N$ with $F(1)=1$). 
 \ignore{{In this case the functors are restricted to taking values in the  category of charge conserving matrices (defined below) of a fixed 
 \ppm{generating [?]}
 dimension.
 \ppm{[...below... I think we will find that $N$ must be set before we have a category. Do you want to try to remove this restriction? I would suggest not to. Indeed I would suggest to lean into this point here and deal with it. It is an opportunity to address the pros and cons of smaller/bigger targets which is (I think) central to the differences in how you and I look at these problems. Functors - in MonFun - that are equivalences play an important role; and I think that for you functors that are definitely not equivalences play an important, and very different, role.]}\ecr{[we don't necessarily need/want to restrict to fixed $N$.  Involutive doesn't restrict.  The subobject/indecomposable/lift/lashing stuff doesn't not restrict to a fixed $N$. And our classification of $\Match^N$ doesn't really restrict, although we consider each rank separately.]}}}
\ppmm{
Restricting to a subcategory $\X$ 
(or indeed any form of restriction) 
raises the possibility  
that the braid representations classified 
are not a transversal 
(up to the appropriate equivalences for the original category)
of {\em all} braid representations. 
So with every $\X$ there comes the question of transversality. 
}
This motivates the  
\sproblem:
 \begin{problems}\label{prob:1}
     Fix a subcategory $\X$ of $\Mat$. Classify functors in $\MonFun(\Bcat,\X)$, up to appropriate equivalences;
     \ppmm{and determine the transversality of this subset in $\ob(\MonFun(\Bcat,\Mat) )$}.
 \end{problems}
Here  the `appropriate' equivalences must be specified, 
 incorporating the symmetries of the category $\X$.  
This aspect is treated generally in \S\ref{s:equivandauto}; 
\ppmm{and several existing illustrative examples are reviewed in \S\ref{s:classifications}.}

\begin{figure}
    \centering
  
\begin{tikzpicture}[every node/.style={midway}]
  \matrix[column sep={6em,between origins}, row sep={2em}] at (0,0) {

    \node(A) {$\PermMat$}  ; & \node(B) {$\MonMat$}; & \node(C) {$\Mat$};  \\

    \node(D) {$\PermMat^N$}  ; & \node (E) {$\MonMat^N$}; & \node(F) {$\Mat^N$}; \\

      \node(G) {}  ; & \node (H) {$\Matchg^N$}; & \node(I) {$\Matcha^N$}; \\ 
       \node(K) {}  ; & \node (L) {$\Match^N$}; & \node(M) {}; \\ 
  };
  \draw[right hook->] (D) -- (A) node[anchor=south]{};

  \draw[right hook->] (E) -- (B) node[anchor=south]{};

  \draw[right hook->] (F) -- (C) node[anchor=south]{};

  \draw[right hook->] (A) -- (B) node[anchor=west]{};
    \draw[right hook->] (B) -- (C) node[anchor=west]{};
      \draw[right hook->] (D) -- (E) node[anchor=west]{};
  \draw[right hook->] (E) -- (F) node[anchor=west]{};
    \draw[right hook->] (I) -- (F) node[anchor=south]{};
     \draw[right hook->] (L) -- (I) node[anchor=south]{};
     \draw[right hook->] (H) -- (F) node[anchor=south]{};
     \draw[right hook->] (L) -- (H) node[anchor=south]{};
\end{tikzpicture}
    \caption{
    A lattice of subcategories $\catC$ of $\Mat$ 
    (thus inducing a lattice of functor subcategories $\MoonFun(\Bcat,\catC)$ of $\MoonFun(\Bcat,\Mat)$).}
    \label{fig:catinclusions}
\end{figure}

The category $\MonFun(\Bcat,\Mat)$ 
can be endowed with 
additional categorical structure that facilitates a more general perspective 
of the \sproblem~\ref{prob:1}.  
Firstly, we will see that there is a natural monoidal structure on $\MonFun(\Bcat,\Mat)$ itself. There is also a natural notion of subobjects and quotient objects in $\MonFun(\Bcat,\Mat)$.  Thus, one could start with any 
{object, i.e. any}
monoidal functor, $F\in\MonFun(\Bcat,\Mat)$ and consider the monoidal subcategory generated by $F$ and its subobjects.  {This motivates generalising the method 
to consider:
\begin{problems}\label{prob:2}
     Find subcategories of $\MonFun(\Bcat,\Mat)$ amenable to classification; \ppmm{classify them and determine their transversality in $\ob(\MonFun(\Bcat,\Mat) )$}.
\end{problems}
In particular, we can study subcategories associated with restricting the 
`target' to a subset, rather than a subcategory of $\Mat$. 
\ppmm{Here we {usually} understand the target to mean a possible matrix type specifically for $R =F(\sigma)$.} 
For example, functors associated with \emph{group-type} solutions \cite{AS} to the Yang-Baxter equation provide such a subcategory: group-type matrices \ref{def:group-type} 
are not closed under composition and hence do not {themselves} form a subcategory of $\Mat$, yet functors with group-type targets do form a
{monoidal subcategory}
 {of   $\MonFun(\Bcat,\Mat)$}. 
Similarly involutive (i.e., $R^2=\Id$) matrices do not form a subcategory of $\Mat$, 
but monoidal functors with involutive targets form a (monoidal) subcategory of $\MonFun(\Bcat,\Mat)$ and  have been classified 
up to (non-monoidal) natural isomorphism 
\cite{LechnerPennigWood}.

\newcommand{\EF}{{\mathfrak{E}}}

Note that there is a natural functor 
$\EF: \Bcat \rightarrow \Sym$ 
that takes 
$g_i \in B_n$ to the underlying permutation in $\Sym_n$. 
This means that every 
strict monoidal 
functor $F:\Sym \rightarrow \Mat$ is also a braid representation $F:\Bcat \rightarrow \Mat$, and is furthermore one with $R=F(\sigma)$ obeying $R^2 = \Id$ 
as above. 
This is one among several properties of $\Bcat$ itself which inform the classification programme, as we will review in \S\ref{s:equivandauto}. 

\medskip 

The main goal of this article is to lay the groundwork for applying categorical techniques to the classification problem.

\section{Categorical Preliminaries}

In this section we first recall some basic notions of category theory.
This is mainly to establish notation, 
but we also present a general    
result showing that we may replace all functors between two strict categories by strict functors, provided that the source object monoid is free on finitely many generators (such as $\Bcat$ and $\Mat^N$, but not $\Mat$).  
We then discuss some basic notions 
and conventions 
related to the categories we are initially interested in, namely the braid category $\Bcat$ and the category of matrices $\Mat$.

If $\catC$ is a category then we write 
$\ob(\catC)$ for the class of objects. However we may simply write an object
$X \in \catC$ for an object in $\catC$. For $X,Y\in\ob(\catC)$ the set of morphisms from $X$ to $Y$ is denoted 
 $\Hom_{\catC}(X,Y)$, or $\catC(X,Y)$,
 or perhaps 
$\Hom(X,Y)$ if $\catC$ is fixed. 
We define
\beq  \label{eq:deEnd} 
\End_C(X) = \Hom_C(X,X)
\eq 
and $\Aut_C(X) \subset \End_C(X)$ as the subgroup of self-isomorphisms. 

\mdef  \label{de:sssttt}
Quite generally, given a category $\catC$, we define 
functions 
$\sss,\ttt : \Hom_{\catC}(-,-) \rightarrow \Ob(\catC)$ such that $\sss(M)$ is the source object and 
$\ttt(M)$ the target object. 
See \ref{de:Farr} for an example.

\begin{defin}\label{def:natural transf}
Let $F,G:\catC\rightarrow\Dd$ be functors.
A {\em natural transformation} $\eta:F\rightarrow G$ 
is  
the following: for each $X\in\ob(\catC)$ we have a morphism $\eta_X\in\Hom_{\Dd}(F(X),G(X))$
 such that for any $\catC$-morphism $f:X\rightarrow Y$ we have $F(f)\eta_X=\eta_Y G(f)$. 
 
 The natural transformation $\eta$ is a natural {\em isomorphism} if, in addition, all the $\eta_X$'s are isomorphisms. 
 \end{defin}
 
\begin{defin}
A \emph{monoidal category} \((\catC,\otimes,\unit,\alpha,\rho,\lambda)\) consists of:
\begin{itemize}
  \item a category \(\mathcal{C}\),
  \item a bifunctor \(\otimes : \mathcal{C} \times \mathcal{C} \to \mathcal{C}\),
  \item a unit object \(\unit \in \mathcal{C}\),
  \item natural isomorphisms (for all objects \(X,Y,Z \in \mathcal{C}\)):
  \[
    \alpha_{X,Y,Z} : (X \otimes Y) \otimes Z \xrightarrow{\sim} X \otimes (Y \otimes Z) \quad\text{(associator),}
  \]
  \[
    \lambda_X : \unit \otimes X \xrightarrow{\sim} X, \quad \rho_X : X \otimes \unit \xrightarrow{\sim} X \quad \text{(unitors),}
  \]
\end{itemize}
satisfying the pentagon and triangle coherence axioms
\ppmm{(see e.g. \cite{EGNO,kassel-})}.
    A monoidal category $\Cc$ is \emph{strict} if the associators and unitors are identities, i.e., if for all objects $X,Y,Z$ in $\Cc$ we have $$(X \otimes  Y ) \otimes  Z = X \otimes  (Y \otimes  Z),\quad X \otimes  \unit = X = \unit \otimes  X.$$ 
\end{defin}

Note that in prescribing a strict monoidal category we may omit most of the tuple needed to prescribe a monoidal category. Thus we will write in the form 
$\Cc = (\Cc, \otimes, \unit)$, 
where $\otimes$ is replaced by the appropriate monoidal product;
and $\unit$ by the monoidal unit object.

The following definition 
adapts from 
\cite[Definition 4.21]{EGNO} to the case of strict categories:

\begin{defin}\label{def:monoidal functor}
    Let  $\Bb = (\Bb, \otimess{\Bb}, \One_\Bb), \; \CDd = (\CDd, \otimess{\CDd}, \One_\CDd)$  be strict monoidal categories.  A \emph{monoidal functor} from $\Bb$ to $\CDd$ is a pair $(F,J)$ where 
    \begin{enumerate}
        \item[1.] $F:\Bb\rightarrow\CDd$ is a functor such that $F(\One_\Bb)$ is isomorphic to $\One_\CDd$,
        \item[2.]  $J$ is a natural isomorphism between the functors 
$\Theta,\Phi:\Bb \times\Bb \rightarrow\CDd$ 
given by 
$\Theta:(X,Y)\mapsto F(X)\otimess{\CDd} F(Y)$ and 
$\Phi:(X,Y)\mapsto F(X\otimess{\Bb} Y)$, and 
\item[3.] $J$ satisfies
\beq    \label{eq:Jcoherence}
J_{X \otimess{\Bb} Y,Z}(J_{X,Y}\otimess{\CDd} \id_{F(Z)})
\; = \;    J_{X,Y\otimess{\Bb} Z}(\id_{F(X)}\otimess{\CDd} J_{Y,Z}).
\eq 
    \end{enumerate}
where $\id_A$ denotes the identity morphism on object $A$ in any setting.    

\medskip
A monoidal functor is \emph{strict} if $J_{X,Y}=\id_{F(X)\otimes F(Y)}$ for all objects $X,Y$. {In this case we will write just $\FF$ for the pair $(\FF,\Id)$.} 
\end{defin}

\begin{remark}\label{rem:J0} We note that it is more common in category theory texts to define a monoidal functor by specifying an isomorphism $J_0\colon \One_\CDd\to F(\One_\Bb) $ and giving conditions that this isomorphism must satisfy (see e.g. \cite[Sec.XI.2]{MacLane}). However, there is a unique isomorphism satisfying the conditions, which can be inferred from the pair $(F,J)$ see \cite[Section 2.4]{EGNO}.
\end{remark}

\begin{defin}\label{de:mon nat tranf}
A natural transformation $\eta:(F,J)\rightarrow (G,K)$ between monoidal functors is {\em monoidal} if, in addition to the axioms in Definition \ref{def:natural transf}, we have $\; K_{X,Y} \; \eta_X\otimes \eta_Y \;=\eta_{X\otimes Y} J_{X,Y}$.

\end{defin}

\begin{remark}
    Monoidal functors, as we have defined them, are often called \emph{strong} monoidal functors, since the $J_{X,Y}$ are isomorphisms.  We take the convention that if a monoidal functor is not strong we will use the term \emph{lax}, so that the unmodified term always means strong.\end{remark}
   
    \subsection{Strictification of monoidal functors} 
    $\;$  
    
    \medskip

    In some circumstances, including the ones we will be interested in, a monoidal functor between two strict monoidal categories is always connected to a strict monoidal functor via a natural 
    isomorphism.  
    We prove this here. 

    On this basis, and in the interests of reducing classification to a tractable problem, we will assume all functors are strict monoidal.
    We note, 
    however, 
    that this 
    `strictifying' 
    natural transformation is not monoidal.    From the point of view of classification of functors from $\Bcat$, this means that, even if we had a classification of all strict monoidal functors, it is not necessarily a straightforward exercise to find 
    all 
    monoidal functors $G\colon \Bcat \to \Dd$ connected via a natural transformation to a given strict monoidal functor $\F\colon \Bcat\to \Dd$.  

    \medskip
    
Let $\Cc = (\Cc, \otimes, \One_\Cc)$ be a strict monoidal category,
with $\Ob(\Cc)$  a finitely-generated free monoid. Specifically put 
$\Ob(\Cc) = \langle X_1,\ldots, X_p \rangle$, the free monoid on $p$ generators.
A word $s = s_1 s_2\cdots s_n$ in $\{1,2,\ldots,p\}^n$ represents an object by 
$s \mapsto X_{s_1} \otimes X_{s_2} \otimes \cdots\otimes X_{s_n}$, where the empty word represents $\unit_\catC$.
\\
Given a strict monoidal category $\Dd = (\Dd,\otimes, \One_\Dd)$,
let  
$(\FF,J)$ be 
a monoidal functor  (as in \ref{def:monoidal functor}) from $\Cc$ to $\Dd$.
Define a map 
$\FF^{st} : \Ob(\Cc) \rightarrow \Ob(\Dd)  \;$
by $\; \FF^{st} (\One_\Cc) = \One_\Dd$ and 
\beq \label{de:FFst} 
\FF^{st} ( s_1 s_2 \cdots s_n ) = \FF(s_1) \otimes \FF(s_2) \otimes \cdots\otimes \FF(s_n)
\eq 
that is
$$
\FF^{st} (  X_{s(1)} \otimes X_{s(2)} \otimes \cdots\otimes X_{s(n)} ) 
=  \FF(X_{s(1)}) \otimes \FF( X_{s(2)} ) \otimes \cdots\otimes \FF( X_{s(n)} )  . 
$$
Observe that $J_{s_1,s_2} \otimes \id_{\FF^{st}(s_3\cdots s_n)}$
is a morphism from  $\FF^{st}(s) $ to $ \FF(s_1 s_2) \otimes \FF^{st}(s_3 \cdots  s_n) $.
Fixing this object $s$ for a moment,  
write $J_j$ for the morphism 
$\; J_{s_1\cdots s_j,s_{j+1}}\otimes \id_{\FF^{st}(s_{j+2}\cdots s_n)}$,
which is from  $  \FF(s_1\cdots s_j) \otimes \FF^{st}(s_{j+1}\cdots s_n)$ to 
$\FF(s_1 \cdots s_{j+1}) \otimes \FF^{st}(s_{j+2}\cdots s_n)$. 
Thus  the composite morphism 
$\; \Psi_s =   J_{n-1} \circ J_{n-2} \circ \cdots \circ J_1  $
is a morphism 
\[
\Psi_s : \FF^{st}(s) \rightarrow \FF(s).
\]
We set $\Psi_{\One_\Cc} = J_0$ where $J_0$ is as in Remark~\ref{rem:J0}.

\newcommand{\FFF}{\FF^{st}}

\begin{example}
    
Consider a case in which the number of generators $p=1$ (such as $\Cc =\Bcat$ from (\ref{de:Bcat})), 
and $\Ob(\Dd) $ is $\langle 1 \rangle = (\N_0, +) \;$ (such as $\Dd = \Mat^N$ from (\ref{de:MatN})). 
Consider also $\FF(1) =1$ (essentially, note, WLOG).
Then $\FFF(0)=0$, 
$\FFF(1) = \FF(1)$,  
$\FFF(11) = \FF(1) \otimes \FF(1)\cong \FF(1\otimes 1 )= \FF(11)$ --- note that if $\Dd$ is skeletal then this is necessarily an equality.
\end{example}

\begin{theorem}\label{th:strictifyingmonfuns}
Let $\Cc = (\Cc, \otimes, \One_\Cc)$, and $\Dd = (\Dd,\otimes, \One_\Dd)$ be strict monoidal categories.
Suppose that 
$\Ob(\catC)$ is a free monoid
on symbols 
$\{ X_1, X_2, \ldots, X_p \}$ (any $p \in \N$). 
Let $(\FF,J)$ be a monoidal functor  
from $\Cc$ to $\Dd$
as in (\ref{def:monoidal functor}).
 Then\begin{enumerate}
     \item[(I)]
the object map $\FF^{st}$ 
from (\ref{de:FFst})
and the morphism map given on each morphism 
 $f: X \rightarrow Y$ in $\Cc$ by 
 $\FF^{st}(f) =  \Psi^{-1}_{Y} \FF(f) \Psi_{X} $ yields a strict monoidal functor 
 $\FF^{st} : \Cc \rightarrow \Dd$. 
\item[(II)] The morphisms $\Psi_{X}$ assemble to a natural isomorphism from $\FF^{st}$ to $(\FF,J)$. 
\end{enumerate}
\end{theorem}

\begin{proof}
(I)
To see that $\FF^{st}$ is a functor observe that \[\FF^{st} (\id_X)= \Psi^{-1}_{X} \FF(\id_X) \Psi_{X} = \Psi^{-1}_{X} \id_{F(X)} \Psi_{X} = \id_{F^{st}(X)},\] and for any $f\colon X\to Y$ and $g\colon Y\to Z$ in $\Cc$,
\[\FF^{st}(gf) = \Psi^{-1}_{Z} \FF(gf) \Psi_{X}= \Psi^{-1}_{Z} \FF(g)\Psi_{Y}^{-1}\Psi_{Y}\FF(f) \Psi_{X}=\FF^{st}(g)\FF^{st}(f).\]
For any pair of morphisms $X=X_{s(1)} \otimes \cdots\otimes X_{s(n)}$ and $X' =X'_{s(1)} \otimes \cdots\otimes X'_{s(n)} $ we have 
$ \FF^{st} ( X \otimes X') = \FF^{st} (  X_{s(1)}  \otimes \cdots\otimes X_{s(n)} \otimes  X'_{s(1)} \otimes \cdots\otimes X'_{s(n)} ) 
=  \FF(X_{s(1)}) \otimes \cdots\otimes \FF(X_{s(n)}) \otimes \FF(X'_{s(1)} )\otimes \cdots\otimes \FF(X'_{s(n)} ) = \FF^{st}(X)\otimes \FF^{st}(X')$, so we may choose the family of natural transformations $\FF^{st}(X\otimes Y)$ to $\FF^{st}(X)\otimes \FF^{st}(Y)$ to all be the identity.
All required identities are then trivially satisfied since associators and unitors in both $\Cc$ and $\Dd$ are also identities.
   \\ (II)
   To see that the $\Psi_X$ assemble to a natural isomorphism from $\FF^st$ to $(\FF,J)$, observe that, for any morphism $f\colon X\to Y$ in $\Cc$,
   $\Psi_Y\FF^{st}(f)=
   \Psi_Y\Psi^{-1}_{Y} \FF(f) \Psi_{X}=
   \FF(f)\Psi{X}$.
\end{proof}

\subsection{The categories $\Bcat$, $\Mat$ and $\Mat^N$}    \label{ss:Bcat}
$\;$  

\ignore{{
The two categories {we} will mainly focus on are $\Bcat$ and $\Mat$. \ppm{[-or rather $\Mat^N$?]}\ecr{[I think no. Eventually we often restrict to $\Mat^N$, but this is to stratify.  $YB(\Mat^N)$ is not monoidal, in particular.  Subobjects also require us to look at the full $YB(\Mat)$.]}
}}

Recall that a strict monoidal category is {\em natural} if the object monoid is (isomorphic to) the monoid $(\N_0 , +)$, i.e. the object monoid is freely generated by a single object, usually denoted 1. 
A {\em natural functor} is a strict monoidal functor between natural categories 
such that $F(1)=1$  (see e.g. \cite{MR1X}). 

\mdef  \label{de:Bcat}
The category $\Bcat$ is the category of braids, in the sense of Mac~Lane \cite[\S XI.4]{MacLane}.
The category $\Bcat$ 
is thus a skeletal, diagonal, strict monoidal category.  
The object monoid is freely generated by a single object
(so $\Bcat$ is natural); 
and $\Bcat(n,n)$ is the braid group $B_n$. 
\\ 
The monoidal product on morphisms can be viewed as a
juxtaposition by means of the left-to-right embedding  $B_n\times B_m\rightarrow B_{n+m}$ (see e.g. \cite[\S13.1]{Martin91}). For example, fixing $k$ then 
$\sigma_i=1^{\otimesb i-1}\otimesb\sigma\otimesb 1^{\otimesb k-i-1}$ is the usual
element in $B_k$ braiding the $i$-th and $i+1$st strands. 

\medskip 

The following elementary properties of $\Bcat$ will be crucial here:
\\ (I)
The object monoid 
of $\Bcat$ 
can be taken to be $(\N_0, +)$.
\\ (II)
The subcategory generated as a monoidal category 
by the elementary braid $\sigma \in \Bcat(2,2)$
and its inverse $\sigma'$ is the whole of $\Bcat$.
\\ (III)
Indeed, $\Bcat$ can be presented as a 
strict 
monoidal category  generated 
by two (mutually inverse) morphisms in $\Bcat(2,2)$. 
This presentation requires only two relations:
\beq  \label{eq:inv}
\sigma \sigma' =1
\eq  
and the `constant Yang-Baxter equation'
\beq \label{eq:YBE}
(\sigma \otimesb 1) (1\otimesb \sigma )(\sigma \otimesb 1) 
= (1\otimesb \sigma ) (\sigma \otimesb 1) (1\otimesb \sigma )
.
\eq
\ignore{
Thus the standard generators $\sigma_i\in\Bcat(k,k)=B_k$ (the braid group on $k$-strands) may be denoted $\sigma_i=1^{\otimesb i-1}\otimesb\sigma\otimesb 1^{\otimesb k-i-1}$.
}
\ignore{ 
The monoidal product on morphisms can be viewed as a
juxtaposition by means of the left-to-right embedding  $B_n\times B_m\rightarrow B_{n+m}$ (see e.g. \cite[\S13.1]{Martin91}) so that, for example, for 
{$\sigma=$}
$\sigma_1\in\Bcat(2,2)$ we have $\sigma_1\otimesb\sigma_1=\sigma_1\sigma_3\in\Bcat(4,4)$. 
}

Note that for    {$\sigma=$}    $\sigma_1\in\Bcat(2,2)$ we have $\sigma_1\otimesb\sigma_1=\sigma_1\sigma_3\in\Bcat(4,4)$. 
But if the left $\sigma_1$ in $\sigma_1 \otimes \sigma_1$
is in $\Bcat(3,3)$ and the right is in $\Bcat(2,2)$ then we would obtain $\sigma_1\sigma_4\in\Bcat(5,5)$, for example.


\mdef  \label{pa:Binv}
Observe from (\ref{eq:inv},\ref{eq:YBE}) that there is a symmetry in the construction of $\Bcat$ under swapping $\sigma$ and $\sigma'$. 
That is, there is an involutive strict monoidal functor $\Fio: \Bcat\rightarrow \Bcat$ given by the assignment 
\beq 
\Fio(\sigma)=\sigma' .
\eq 

\mdef   \label{pa:Bopp}
Observe that $\Bcat$ is isomorphic to its opposite. 
The functor $\Feta: \Bcat \rightarrow \Bcat^o$ is the identity map on objects
and takes each morphism 
to its inverse 
(rather than inverting `generator-wise' as in (\ref{pa:Binv})).
(This lifts to the linearised categories in the natural way.) 

\mdef 
Properties (\ref{pa:Binv}) and (\ref{pa:Bopp}) mean in particular that if $F(\sigma)=R$ gives a representation then both 
$F'(\sigma) =R^{-1}$ and,
if $F: \Bcat\rightarrow \Mat$,  also 
$F''(\sigma) = R^{tr}$ (matrix transpose) give a representation. 

See \S\ref{s:equivandauto}  {\em et seq} for how these constructions inform the classification programme. 

\mdef  \label{pa:Bflipp}
There is also an important geometric isomorphism functor 
$\Fox : \Bcat \rightarrow \Bcatmop$
- the map on morphisms is (if braids are seen as passing down the page) to read braids from right to left, i.e. to flip about a vertical axis.

Note that there is an `inner' realisation of $\Fox$: 
conjugation in each group $B_n$ by the half-twisted full-$n$-strand ribbon braid 
- this braid is denoted $M$ in \cite[\S5.7.2]{Martin91}, but here it will be convenient to have a notation that records $n$, so we write $\Mmm_n$.

\medskip 

\mdef  \label{de:Sym}
We write  
$\Sym$  for the `symmetric' category. It 
is directly analogous to the category $\Bcat$ above but is the category of symmetric groups 
$\Sym_n$. (So if we write the generating elementary transposition as 
$\varsigma \in \Sym(2,2)$ then we have 
the additional relation $\varsigma^2 = Id_2$.) 

\medskip

\ignore{{
\ecr{\sout{What follows is} \ppm{See Appendix~\ref{which?} for [?]} a ham-handed attempt to formulate $\Mat$ in a convenient way.  In my mind I am think of it as its fully faithful embedding into $Vec_{basis}$ of f.d. vector spaces with a chosen (ordered) basis.  The latter can be made monoidal, but of course one has to chose the right ordering.  Which we do.  We could formulate it in the formal way, perhaps?  It is convenient to think of the morphisms as matrices being applied to vectors.}
}}

\mdef  \label{de:Mat}
We write  
$\Mat = (\Mat,\otimes,1)$  for  
the skeletal, strict monoidal linear category of matrices over
a given commutative ring $k$ 
(by default we take $k=\C$), with object monoid  $(\N,\cdot)$
{(meaning $m\otimes n = m.n$ on objects)}, 
and 
with   
Kronecker product for the monoidal product of morphisms - see below. 
(Recall that the Kronecker product corresponds to tensor product of linear maps with respect to a chosen basis.)  

Here our convention for morphism sets is that morphisms in $\Mat(m,n)$ are $n \times m$ matrices, i.e. with $n$ rows (a departure, note, from the 
conventions in \cite{MR1X}).
The row labels for matrices in $\Mat(m,n)$ are $1,2,...,n$; and columns $1,2,...,m$.

In particular $\Mat(m,1)$ is here the space of $m$-component  
row vectors.  
If we write $\bra{i} \in \Mat(m,1)$ we mean the elementary 
row vector with entry 1 in the $i$-th position.  Similarly $\ket{i} \in \Mat(1,n)$. 
Given a matrix $M \in \Mat(m,n)$ we continue to write $M_{ij}$ for the entry in row $i$, column $j$. Thus $M_{ij} = \bra{i} M \ket{j}$. 

\mdef On morphisms, i.e. matrices, we take the  
the $Ab$ convention 
\cite[Ch.3 sec.1]{Murnaghan}
for the Kronecker product, so that, for example, 
\beq
A \otimes B 
= \mat a_{11} b_{11} & a_{12} b_{11} &... \\ 
a_{21} b_{11} & a_{22} b_{11} & ... \\
\vdots  & \vdots 
\tam
\; \mbox{ and in particular } \;\;\;
A \otimes \matt{cc} 1&0 \\ 0&1 \ttam  = \matt{c|c} A & \\ \hline  & A \ttam 
\eq 
For comparison the $aB$ convention gives:
\beq
A \optimes B 
= \mat a_{11} b_{11} & a_{11} b_{12} &... \\ 
a_{11} b_{21} & a_{11} b_{22} & ... \\
\vdots & \vdots 
\tam
\eq 
The choice of $Ab$ convention corresponds to 
the order of the basis  of $\C^K\otimes \C^L$ 
(needed, since this product passes to the same object in $\Mat$ as $\C^{K.L}$, which already has ordered  basis) being $\ket{1}\otimes\ket{1},\ket{2}\otimes\ket{1},\ldots,\ket{K}\otimes\ket{1},\ket{1}\otimes\ket{2},\ldots,\ket{K}\otimes\ket{L}$.  And similarly for $3$ or more tensor factors: we take the 'standard' ordered bases and then use the reverse lexicographic ordering induced by the ordering on the tensor factors.

\mdef  \label{de:Matmop}
As in \cite{MR1X} it will be convenient to write $\Matop = (\Mat,\optimes,1)$ 
for the monoidal category with the aB-convention Kronecker product, i.e. 
$A \optimes B = B \otimes A$. 

\ignore{{\mdef
For $N,M \in \N$
let $P_{N,M}$ be the $NM\times NM$ matrix 
that changes the ordered basis 
\ppm{[no ordered basis here! say: row/column order]}\ecr{[I am trying to efficiently describe a matrix.  Would it help to write out an example, say with $N=2,M=3$? I put it in, to be deleted later, of course.]}
$\ket{11},\ket{21},\cdots$ to $\ket{11},\ket{12},\cdots$. \ppm{[-these things seem undefined in $\Mat$, but I don't think we'll ever use them, so let's just leave it.]}
Then $A\optimes BP_{M,N}=P_{N,M}A\otimes B$.  \ppm{[I dont understand this yet. but we dont use it so I'll move on.]} \ecr{ For example, the case of $N=2$ and $M=3$ has \[ P_{2,3}=\left[ \begin {array}{cccccc} 1&0&0&0&0&0\\ \noalign{\medskip}0&0&1&0
&0&0\\ \noalign{\medskip}0&0&0&0&1&0\\ \noalign{\medskip}0&1&0&0&0&0
\\ \noalign{\medskip}0&0&0&1&0&0\\ \noalign{\medskip}0&0&0&0&0&1
\end {array} \right]
 \]
which sends $[\ket{11},\ket{21},\ket{12},\ket{22},\ket{13},\ket{23}]$ to $[\ket{11},\ket{12},\ket{13},\ket{21},\ket{22},\ket{23}]$}
In fact, $P_{N,M}$ gives $\Mat$ the structure of a (symmetric) braided monoidal category, see (\ref{ybmat is monoidal}).}}

\mdef  \label{de:MatN}
Recall from \cite{MR1X} that $\Mat^N$ denotes 
the full (hence linear) monoidal subcategory of $\Mat$ monoidally generated
by the object $N$ in $\Mat$.  
The object monoid of $\Mat^N$ may be taken to be $(\N_0,+)$, 
but corresponds 
(via exponentiation)
to the multiplicative submonoid $\{1,N,N^2,\ldots\}$ in $\Mat$. 
Thus morphisms in $\Mat^N(j,k)$ are $N^k\times N^j$ matrices. 

Observe that a SMF $\; F:\Bcat \rightarrow \Mat$ is essentially the same as  a SMF 
$\; F':\Bcat \rightarrow \Mat^{F(1)}$ with $F'(1) =1$.  

Note that in this sense {\em every} braid representation can be regarded as a natural functor to some $\Mat^N$. 
Possible interest in functors that are {\em not} natural arises only once has restricted 
the target category to one that is not full. 
We will have examples of this later. 


Another notational difference from $\Mat$ is that 
the index set for rows of a matrix in $\Mat^N(j,k)$ is the set 
$\underline{N}^k$ of words of 
length $k$ in the symbol set $\underline{N}=\{1,2,\ldots,N\}$. 
Our convention 
is to take 
{\em revlex} order on words 
(for example 11, 21, 31, ..., $N1$, 12, 22, 32, ...).
This yields that $|w\rangle \otimes |v\rangle =| wv\rangle$ 
(concatenation of words). 

\newcommand{\mm}{{\mathbf{m}}}
\newcommand{\ff}{{\mathsf{f}}}

\mdef \label{de:alma-order} 
Fix $N,k \in \N$. 
Recall that for $S$ a set then $S^*$ is the set of all words in the symbol set $S$. 
We define a partial order on  $\underline{N}^k$ as follows. 
Firstly define
\[
\Lambda^N_n \; = \; \{  \mm \in \N_0^N \; | \;  \mm.(1,1,...,1)=n \}
\]
- the set of {\em weak compositions} of $n$ into $N$ parts. 
Then define $\ff :  \underline{N}^k \rightarrow  \Lambda^N_k $ by 
setting $\ff(w)_i = | \{ j | w_j = i \}|$. 
Define $\overline{\ff} : \N_0^N \rightarrow \underline{N}^*$ by
$ 
\overline{\ff} (\lambda) =  ( N,N,...,N, \; N\!-\!1, ..., 1,1,...,1 )
$
where the number of $i$'s is $\lambda_i$. 
We define an order $(\Lambda^N_k , <)$  by ordering $\underline{N}^k$ in revlex,
applying $\ff$ to this sequence, then $\lambda < \mu$ if $\lambda$ first appears 
before $\mu$ in the sequence $\ff(\underline{N}^k)$. 
This then induces a partial order on  $\underline{N}^k$ via $\ff$ in the obvious way. 
We can extend to $\underline{N}^*$ by  taking words of different lengths to be incomparable. 

\mdef  \label{de:flip}
Observe that for fixed $N$ there is a permutation matrix at each level $n$ that takes revlex order to lex order 
(11,12,13,..., $1N$, 21, 22, 23, ...). We write $\Fff_n$ for this matrix. 
For example with $N=2$
\beq
\Fff_2  = \mat 1 \\ &&1 \\ &1 \\ &&& 1 \tam ,   \hspace{2cm} 
\Fff_3 = 
\begin{blockarray}{ccccccccc} 
 & 111 & 211 & 121 & 221 & 112 & 212 & 122 & 222 \\ 
\begin{block}{c[cccccccc]}
 111 &  1 &&&&&& \\ 
 211 &   &&&& 1 \\ 
 121 & && 1 \\
 221 & &&&&&& 1 \\ 
 112 & &1   \\
 212 &  &&&&& 1  \\
 122 &  &&&1  \\
 222 & &&&&&&& 1
 \\
\end{block}  
\end{blockarray}
\eq 

This word index set conforms with our  convention 
of denoting column vectors as $\ket{w}$ for $w$ a word of length $j$ in $\ul{N}$.  Then for $M\in\Mat^N(j,k)$ we write the coefficients as $M_{v,w}$ where $M\ket{v}=\sum_{w\in \ul{N}^k}M_{v,w}\ket{w}$.

\mdef   \label{de:Farr}
We write $\Farr:\Mat^N \rightarrow \Matop^N$ for the functor flipping the Kronecker convention. We have 
\beq
\Farr (M) = \Fff_{\sss(M)} M \Fff_{\ttt(M)}
\eq

\section{Effective narrowing of the classification problem}\label{ss:subcats}

Here we focus on narrowing the classification problem by narrowing the final target. 
Another significant form of narrowing is to require that functors factor through some simpler intermediate category 
(for example using the natural functor ${\mathfrak S}: \Bcat\rightarrow\Sym$ and a classification of symmetric representations), 
but we will address this in \S\ref{s:equivandauto}. 
 
As noted in the Introduction, and in \ref{de:MatN},
restriction of the target of $F:\Bcat\rightarrow\Mat\;$ from $\Mat$  to $\Mat^N$ for some $N$ is no restriction at all. It provides,  rather, the `universe' of the first natural step in classification - classification simply according to the image $F(1)$.
With a view towards \sproblems~\ref{prob:1} and \ref{prob:2}, we describe some properly restricted targets. 
As we will see in Section \ref{s:classifications}, these have emerged in pursuit of 
the aim of 
materially simplifying the classification problem,
while retaining important classes of braid representations. 
The corresponding problems  
will be revisited in section \ref{s:subcats of MonFun} after we have developed some tools. 

The current status of each of these classification problems  
will be described in Section \ref{s:classifications}.
Here we restrict ourselves to a few remarks needed to set the scene. 

\ignore{{
\ppm{[
A section specifically on restrictions of the classify braid reps problem. - In addition to restricting the target, `source-side' restrictions such as restricting eigenvalues of $R$ are very important, for example? [I think we did have a functioning version of this sec at some point (although my specs might be rose-tinted) - I'll have a search back.]]}\ecr{[substantially rearranged by Paul and Eric 12/5/25.]}

\subsection{Subcategories and subsets of $\Mat$}\label{ss:subcats}   $\;$

As noted in the Introduction, and in \ref{say-where},
restriction of the target of $F:\Bcat\rightarrow\Mat\;$ from $\Mat$  to $\Mat^N$ for some $N$ is no restriction at all.
Now we describe some properly restricted targets, some of which  
materially simplify the classification problem,
without discarding important classes of braid representations, as we shall see. }}

The main motivating example here is the category type $\Match^N$, for $N \in \N$. 
We recall the definition in \ref{de:Match}.
Each such ansatz amounts to an ansatz for the R-matrix $F(\sigma)$. 
That is, the initial ansatz, assuming $F(1)=1$,  
is that $R$ can be a generic matrix in, in this case,  
$\Match^N(2,2)$ - hence with a corresponding number of nominally free parameters. 
The generic ansatz is then narrowed by solving the YBE, 
passing to a solution variety. 
Of course the ansatz does not have to be characterised categorically - this kind of
condition is an enrichment strategy motivated by the great difficulty of the 
classification problem in general. 
A simple example is Hietarinta's approach in \cite{HietarintaUpperTri}, 
where the ansatz is that the `unchecked' R-matrix is upper-triangular. 
In this case the ansatz is sufficient to complete classification in ranks $N=2,3$. 
But higher ranks remain open. This contrasts instructively with $\Match^N$, 
where classification has been completed in all ranks when $F(1)=1$. 
(In fact in \ref{de:ccwg} below we will include a discussion of the `pushout' of these two approaches.) 

The $\Match^N$ ansatz is well-motivated for a number of intrinsic reasons, before we get to the bonus that it allows solution of the braid representation classification problem when $F(1)=1$. 
There is a strong link between quantum spin chains, quantum groups, 
quantum Schur-Weyl duality and braid representations, and $\Match^N$ with $F(1)=1$ captures this, so that many well-studied representations are of this form, as we will see. 
Formulating this class categorically (or even just in terms of `charge conservation' - where matrix row and column indices are spin-chain configurations, organised by a suitable notion of total charge)
naturally suggests various generalisations, and we will start here with one of these. 

\begin{defin}  \label{de:Matcha}
Let $N \in \N$. A matrix $M$ in $ \Mat^N(j,k)$ is {\em additive charge conserving} if $M_{v,w} \neq 0$ implies that $\sum_i v_i = \sum_i w_i$ 
(recall from \ref{de:MatN} that $v \in \ul{N}^k$, see \cite{HMR1}). 
\end{defin}
In \cite{Almateari24} it is shown that, 
for each $N$, 
additive charge conserving matrices form a linear strict monoidal subcategory
of $\Mat^N$. 
We denote this subcategory $\Matcha^N$. 

\begin{defin}  \label{de:Match}
A matrix $M\in\Mat^N(n,m)$ is {\em charge conserving}
if 
$M_{w,w'} = \langle w | M | w' \rangle \neq 0$ 
implies that 
$w$ is a perm of $w'$. 
That is $w = \sigma w'$ for some $\sigma\in\Sigma_n$,
where the symmetric group 
$\Sigma_n$ acts by place permutation
(note in particular that this requires $n=m$).
\end{defin}

The subset of $\Mat^N$ of charge conserving (cc) 
matrices
forms a diagonal linear monoidal subcategory (see for example 
\cite[Lem~{3.7I}]{MR1X}) denoted $\Match^N$. 

\mdef 
Notice that $\Match^N$ is a subcategory of $\Matcha^N$, since the condition $\sum_i v_i = \sum_i w_i$ is clearly satisfied if $v$ is a permutation of $w$.  Note also that the morphisms in $\Match^N$ are all square matrices.

\medskip 

Observe that the algebras $\Match^N(n,n)$ are, in themselves, semisimple (see also \cite{Almateari24}). 
Choosing a semisimple target for representation theory does not imply semisimple 
representations, since the image may be smaller than the target 
(this holds in ordinary 
\ppmm{linear/Artinian}
representation theory and, piecemeal, in our higher representation theory). 
But as we will see in \S\ref{s:classifications}, such a target is restrictive on non-semisimple representations. 
Enlarging the target generally makes the classification problem harder, so it is 
useful to have a target which is close to $\Match^N$ (in a sense that we will explain 
in (\ref{de:ccwg}-\ref{pa:Qfunctor}))
but retains key properties - in the sense that the maximal semisimple quotient of an {\em ordinary} representation is a representation retaining key properties (such as operator spectrum/characters).  

\begin{defin}  \label{de:ccwg}
Let $N,n,m \in \N$. 
A matrix $M\in\Mat^N(n,m)$ is {\em charge conserving with glue (ccwg)}
if 
$M_{w,w'} = \langle w | M | w' \rangle \neq 0$ 
implies that 
$w < w'$ (where $(\underline{N}^*,<)$ is the order defined in \ref{de:alma-order}).  
\end{defin}

The subset of $\Mat^N$ of ccwg 
matrices forms a diagonal linear monoidal subcategory 
denoted $\Matchg^N$. 
(The term `glue' comes from the informal term for radical elements in non-semisimple ordinary representation theory.) 

\mdef 
Notice that $\Match^N$ is a subcategory of $\Matchg^N$. 

Notice also that the number of nominally free parameters in the ccwg ansatz is greater 
than in Hietarinta's upper-triangular ansatz \cite{HietarintaUpperTri}; but that this does not of itself 
imply that the ansatz will embrace more solutions. 

\mdef   \label{pa:Qfunctor}
Regarding $\Match^N(n,n)  \hookrightarrow \Matchg^N(n,n)$ simply as vector spaces, then 
$\Matchg^N$ of course has the 
complementary 
pure-glue subspace (all entries allowed non-zero in $\Match^N$ are zero). 
We thus have linear maps   
${\mathcal Q}:\Matchg^N(n,n) \rightarrow  \Match^N(n,n)$ by taking the quotient by the pure-glue subspace 
(or equivalently by zeroing the pure-glue entries). 
A key result is: \\ 
The map ${\mathcal Q} $ map extends to a functor 
${\mathfrak{Q}}:  \Matchg^N \rightarrow \Match^N $. 
\\ 
This means in particular that every ccwg braid representation restricts (in this sense) to a cc braid representation. 
We thus have a partial classification of ccwg representations according to their restriction. 
(This is in contrast with, for example, $\Matcha^N$. Here there is a corresponding vector space map, but no functor.)

\medskip  \medskip 

In the above examples we first restrict the object monoid 
(this is simply the choice of $N$), thus passing from $\Mat$ to $\Mat^N$ --- as noted, this is no restriction at all, 
provided that we eventually consider all $N$. 
And then we restrict the set of morphisms, 
to a subset which must be closed under composition. 
Again this is natural, since $\Bcat$ is closed. 

Notice that our default target $\Mat$ is linear. 
That is, its morphism sets are $\C$-vector spaces. 
This is beneficial in the same way as it is in ordinary representation theory - 
giving us access to the rep theory of the group algebras of the collections of 
groups (in $\Bcat$) that we are studying. 
The variants 
$\Mat^N$, $\Match^N$, $\Matcha^N$ and $\Matchg^N$ are also linear categories. 

One may, 
instead of restricting the object monoid first,  
restrict the morphisms first as in the following:

\mdef 
We may consider the (not linear) subcategory $\UMat$ consisting of unitary matrices in $\Mat$, i.e., $U\in\Mat$ such that $UU^\dag=\Id$ where $U^\dag$ is the usual conjugate transpose.
That is to say, the set of unitary matrices is closed under the matrix multiplication, but not addition. 

Notice that in $\UMat$ every morphism is an automorphism, i.e. there are no morphisms between $n,m$ if  
{$n \neq m$}; and every morphism is invertible. 
Hence $\UMat$ is a monoidal groupoid. The subcategory $\OMat$ of (real) orthogonal matrices form a subcategory.

\begin{defin}\label{def:monomial/perm}
     A matrix $M$ is called \emph{monomial} if there is exactly one non-zero entry in each row and column. Notice that monomial matrices are closed under composition. Let $\MonMat$ denote the (not linear) subcategory of $\Mat$ of monomial matrices.   A matrix $P$ is a \emph{permutation} matrix if it is monomial and each non-zero entry is $1$. {Let $\PermMat$ denote the subcategory of $\MonMat$ of permutation matrices.}
     \end{defin}
     Notice that $\MonMat$ and $\PermMat$ are also monoidal groupoids.

Other collections of matrices that do not form monoidal subcategories of $\Mat$ are worthy of consideration as well. 
A prime example are group-type matrices:
\begin{defin}\label{def:group-type} [c.f. \cite{AS}]
    Let $N\in\N$ and $R\in\Mat^N(2,2)$.  Then $R$ is of \emph{group-type} 
   if, with respect to the standard basis $\{\ket{i}\}_{i=1}^N$ of $k^N$
   there are $g_i\in GL(k^N)$ such that for all $i,j$ \[R\ket{ij}=g_i\ket{j}\otimes \ket{i}.\]
\end{defin}

Group type matrices are not closed under composition, and so do not form a category.  However, we may still employ the same notation as above and denote by $\GTMat^N$ the $N^2\times N^2$ group-type matrices, and consider these as targets.

\mdef  \label{de:invol}
Another class of matrices considered as targets
that are not closed under composition is that of \emph{involutive}  matrices, i.e. those satisfying $A^2=I$. 
 We will denote these matrices by $\Invol$ generally, and $\Invol^N$ for the $N^2\times N^2$ cases.  In section \ref{sss:invol} we will see that this may also be interpreted as restricting the source.

\ignore{{
\subsection{On cases with $F(1) \neq 1$}

As noted in the Introduction, the motivating examples here are \ecr{objects $F$ in} \sout{those of form} $\MonFun(\Bcat,\Match^N)$,
which have all been completely classified, subject only to the condition $F(1)=1$. \ecr{[Does the introduction need more specific language?]}
It is instructive for a number of reasons  to explain what happens when 
this condition is lifted.
The most obvious new case here is $F(1)=2$, and this example is indeed illuminating, as we
show next.  

\newcommand{\x}{x}
\setcounter{MaxMatrixCols}{20}

Here is the `shape' of an $R$-matrix targeting $\Match^2$ with $F(1)=2$:
\beq  \label{eq:F12shape}
\matt{cccc|cccc|cccc|cccc} 
\x &   &   &   &   &&&&&&&&   \\
   &\x &\x &   & \x &   &&& \x &&& \\
   &\x &\x &   & \x &   &&& \x &&& \\
   &   &   &\x &    &      \x & \x & & & \x&\x &  & \x&&& \\ \hline 
   &\x &\x &   & \x &   &&& \x &&&  \\
   &   &   &\x &    &      \x & \x & & & \x&\x &  & \x&&& \\ 
   &   &   &\x &    &      \x & \x & & & \x&\x &  & \x&&& \\ 
   &&&&&&& \x  &&&& \x && \x & \x\\ \hline
   &\x &\x &    &\x  & &&& \x &&& \\ 
   &   &   &\x &    &      \x & \x & & & \x&\x &  & \x&&& \\ 
   &   &   &\x &    &      \x & \x & & & \x&\x &  & \x&&& \\ 
   &&&&&&& \x  &&&& \x && \x & \x\\ \hline
   &   &   &\x &    &      \x & \x & & & \x&\x &  & \x&&& \\    
   &&&&&&& \x  &&&& \x && \x & \x\\ 
   &&&&&&& \x  &&&& \x && \x & \x\\ 
   &&&&&&&&&&&&&&&\x 
\ttam 
\eq 
The natural comparison here is with $F(1)=1$ at rank $N=4$, which is completely understood.
In this case, for comparison, the shape is

\newcommand{\xxx}{} 

\[
\matt{cccc|cccc|cccc|cccc} 
\x &   &   &   &   &&&&&&&&   \\
   &\x &\xxx &   & \x &   &&& \xxx &&& \\
   &\xxx &\x &   & \xxx &   &&& \x &&& \\
   &   &   &\x &    &      \xxx & \xxx & & & \xxx&\xxx &  & \x&&& \\ 
\hline 
   &\x &\xxx &   & \x &   &&& \xxx &&&  \\
   &   &   &\xxx &    &      \x & \xxx & & & \xxx&\xxx &  & \xxx&&& \\ 
   &   &   &\xxx &    &      \xxx & \x & & & \x&\xxx &  & \xxx&&& \\ 
   &&&&&&& \x  &&&& \xxx && \x & \xxx\\ 
\hline
   &\xxx &\x &    &\xxx  & &&& \x &&& \\ 
   &   &   &\xxx &    &      \xxx & \x & & & \x&\xxx &  & \xxx&&& \\ 
   &   &   &\xxx &    &      \xxx & \xxx & & & \xxx&\x &  & \xxx&&& \\ 
   &&&&&&& \xxx  &&&& \x && \xxx & \x\\ 
\hline
   &   &   &\x &    &      \xxx & \xxx & & & \xxx&\xxx &  & \x&&& \\    
   &&&&&&& \x  &&&& \xxx && \x & \xxx \\ 
   &&&&&&& \xxx  &&&& \x && \xxx & \x \\ 
   &&&&&&&&&&&&&&&\x 
\ttam 
\]

Note that in this case $F(1)=1$ lies `inside' $F(1)=2$ but is substantially smaller.}}

\section{Categories of Functors from $\Bcat$}

Recall from  
Section \ref{S:intro} that our aim  
\ppmm{in principle is to classifying strict monoidal}
functors from $\Bcat$ to $\Mat$. 
This immediately raises three questions that we  
address  further 
in this Section: 
Why restrict to strict monoidal functors?;
What notion of equivalence should classification be up to?; and 
How to overcome the intractability of the problem?

\ppmm{As discussed in \S\ref{ss:subcats}, to sidestep the intractability of the problem, we have}  
in mind the possibility of   
\ppmm{restricting the target $\Mat$ to} 
something 
for which these functors are 
more amenable to classification - motivated 
by the success of this 
tactic  
for example in \cite{MR1X}.

{First let us explain why there is no point in considering the problem with the monoidal structures forgotten.}
The category $\Fun(\Bcat,\Mat)$ of  functors from $\Bcat$ to $\Mat$ with morphisms being the natural transformations is too general
to be interesting: 
such a functor is 
{simply a free choice of a representation} 
$\rho_n$ for each $n$, and the natural transformations are intertwining maps.  Since there are no morphisms in $\Bcat$ between distinct objects, one does not have any cohesion between the $\rho_n$.  
We {avoid} this shortcoming 
{(the requirement effectively being to classify all braid {\em group} representations)}
by requiring the functors to respect the monoidal structure.

{So then the next question is the {\em extent} to which the monoidal structure should be respected (specifically, why {\em strict} monoidal functors?).}

Theorem \ref{th:strictifyingmonfuns} applies to the source category $\Bcat$, since the object monoid $\ob(\Bcat)=\N_0$  is free on $\{1\}$.  
So, up to natural isomorphism, we lose nothing by restricting to strict monoidal functors. 

\medskip 

Next we turn to the question of good notions of equivalence. 
Here we will articulate this in 
terms of isomorphisms in a category having our braid representations as objects. 

\mdef \label{de:MoonFun}
Fix a strict monoidal category $\catC$. 
$\MoonFun(\Bcat,\catC)$ is the category of \emph{strict} monoidal functors $F$ from $\Bcat$ to $\catC$, and morphisms the  natural transformations $\eta:F\rightarrow G$ for $F,G\in\MoonFun(\Bcat,\catC)$. 

Suppose that $F(1)=X$ and $F(\sigma)=\chi\in\catC(X^{\otimes 2},X^{\otimes 2})$ and similarly $G(1)=Y$ and $G(\sigma)=\gamma$.  These completely determine $F$ and $G$.  How does one specify a natural transformation $\eta$?  
By \ref{def:natural transf} 
its components are  morphisms $\eta_n\in\catC(X^{\otimes n},Y^{\otimes n})$ such that $\eta_nF(\beta)=G(\beta)\eta_n$ for $\beta\in B_n$. 

For classification one is interested in particular in the natural isomorphisms, which give us one useful notion of equivalence for braid representations 
- and quite generally for the objects in any functor category.
Note that this gives us a natural hierarchy of, in-principle weaker 
than natural isomorphism 
(i.e. more inclusive),  
equivalences among such braid representations. For each $p\in\N$ we say two braid reps are $p$-equivalent {\em in} $\MoonFun(\Bcat,\catC)$  if there exist morphisms $\eta_n$ as above for all $n \leq p$. 
For example with $\catC=\Mat$ then $F,G$ are 1-equivalent here if $F(1)=G(1)$; and the condition for 2-equivalence here is that $F(\sigma)$ and $G(\sigma)$ are equivalent matrices. 
(We will restate this in more rarefied language in \S\ref{ss:p}.)

\mdef \label{de:MonFun} We denote by $\MonFun(\Bcat,\catC)$  the subcategory 
with 
morphisms the {\em monoidal} natural transformations as in Definition \ref{de:mon nat tranf}.  Since both $\Bcat$ and $\catC$ are strict this means that $\eta:F\rightarrow G$ satisfies \[\eta_{m+n}=\eta_m\otimes\eta_n.\] This is a powerful  constraint: in particular $\eta$ is determined by $\eta_1:F(1)\rightarrow G(1)$.  Conversely, any $f\in\catC(F(1),G(1))$ that satisfies $(f\otimes f)F(\sigma)=G(\sigma)(f\otimes f)$ uniquely determines a morphism in $\MonFun(\Bcat,\catC)$ by defining $\eta_n=f^{\otimes n}$.

  \ignore{{
  , as they are only constrained by $F(\beta)\eta_n=\eta_nG(\beta)$ for $\beta\in B_n$.  In particular there is no \emph{a priori} relationship among the $\eta_n$ for distinct $n$, so one must provide a such a morphism $\eta_n:F(\beta)\rightarrow G(\beta)$ for each $n$. Thus while is often difficult to determine whether or not two functors are naturally isomorphic (when they are not already monoidally naturally isomorphic), there are examples of such situations, see, eg. Example \ref{ex:9x9unitaries} and Theorem \ref{th:ds is infinity}. }}

\begin{remark}  \label{rem:restrict2strict}
  What do we lose by restricting to strict monoidal functors?  In the case $\catC=\Mat$ we have from Def.~\ref{def:monoidal functor} that a general monoidal functor $(F,J):\Bcat\rightarrow \Mat$ has $F(1)=N$ and $F(\sigma)=R$ with 
  the infinite set of invertible matrices $J_{m,n}:N^m\cdot N^n\rightarrow N^{m+n}$ 
  satisfying:
    \[
J_{m+n,k}(J_{m,n} \otimes \mathrm{Id}_{N^k}) = J_{m,n+k}(\mathrm{Id}_{N^m} \otimes J_{n,k}) \; \forall m,n,k.
\] 
This is all one needs, all other images are determined in terms of $F(\sigma)$ and the $J_{m,n}$, e.g. for $\sigma_1\in \Bcat(3,3)$,
$F(\sigma_1)=J_{2,1}(F(\sigma)\otimes \id_N )J_{2,1}^{-1}$.  Of course  finding non-trivial solutions to this infinite system of matrix equations could be tedious. 

\ignore{{\ft{[At the moment we aren't really using that $(F,J)$ is connected to a strict monoidal functor via a natural transformation. Of course this turns out to not help, but a priori it could, and I think the fact that it doesn't is exactly what we want to convey in this remark.]}\ecr{[feel free to add to this remark.  I am not sure how to say what you want to convey.]}}}
\end{remark}

\subsection{The categories $\YB(\catC)$} 
\label{ss:YB}

We continue with $\catC$ a strict monoidal category. 
We can construct a category isomorphic to $\MonFun(\Bcat, \catC)$ shifting the focus from functors as objects to braidings on objects 
in $\catC$ via the category $YB(\catC)$ defined as follows (see e.g. \cite{street2012,kassel-}
for proof of well-definedness):

\begin{defin}\label{de:YBC}
Fix a strict monoidal category $\catC$. 
(I) A \emph{\YangBaxterobject} of $\catC$ is 
a pair $(X,\chi)$ where $X\in Obj(\catC)$ and $\chi\in \catC (X\otimes X,X\otimes X)$ is an invertible morphism satisfying the Yang-Baxter equation in 
$\catC(X^{\otimes 3}, X^{\otimes 3})$ cf. \eqref{eq:YBE}: 
\begin{equation}\label{def:ybobj}
    (\chi \otimes \id_X) (\id_X\otimes \chi )(\chi \otimes \id_X) 
= (\id_X\otimes \chi ) (\chi \otimes \id_X) (\id_X\otimes \chi ).
\end{equation}

\ignore{{In other words $(X,\chi)$ is a \YBo\ iff $\chi$ gives a braid representation. \ecr{[a braid representation is a functor?  $\chi$ is not.]} \ppm{-but it gives one, right? Not sure why we are hiding this.}}}

(II)
The category 
$YB(\catC)$ has 
\ppmm{Yang--Baxter objects in $\catC$ as objects; and }
 morphism sets $YB(\catC) ( (X,\chi), (Y,\gamma)   ) \subseteq \catC(X,Y)$ 
consisting of those $f\in\catC(X,Y)$
such that 
\beq   \label{eq:intertwine}
\gamma(f\otimes f)=(f\otimes f)\chi   .
\eq 
\end{defin}

\ignore{{
\ppm{[paul q to self: is it defn or prop/defn? a set and a set of sets dont always make a cat.]}
\end{defin}

\mdef   \label{de:YBo}
Recall from the Introduction, or from 
(\ref{de:Bcat}),
that the monoid $\ob(\Bcat)$ is generated by 1; and the category is monoidally generated by $\sigma$ (and inverse).  
Hereafter we sometimes refer to a braid representation $F:\Bcat\rightarrow \catC$, 
i.e. an object in $\MonFun(\Bcat,\catC)$,
when expressed as the pair $(F(1),F(\sigma))$,
i.e. as 
an object in $YB(\catC)$,
as a {\em \YangBaxterobject}.
\ppm{[-All OK now? Then we can delete the next para?]}
\\
\ppm{[-here is a (meta) proposal: how about we leave terms/notations used in Kassel to have the meanings as in Kassel, and modify the term a bit somehow if we use it for something slightly different. So for example leave Y-B object to have the meaning as in Kassel; and perhaps also have YB(C) as in Kassel. This might mean quite a lot of superficial changes in the doc here, but easy ones, and with the obvious reward of less chance of confusion? OTOH, perhaps there are a couple of lemmas that are true that make everything consistent - also happy to go that way.]}

\medskip 

It is shown in \cite[Lemma XIII.3.5]{kassel-} 
 that from a Yang-Baxter object 
 $(X,\chi)\in YB(\catC)$ 
 one obtains a strict monoidal functor $F\in\Ob(\MonFun(\Bcat,\catC))$ 
  {by setting} $F(1)=X$ and $F(\sigma)=\chi$.  
 \ppm{[-(guessing some meanings above) isn't this direction just obvious from \ref{de:YBC} and properties of $\Bcat$ already established? the morphism aspect is maybe harder?:] We claim also that given $f \in YB(\catC)(....)$ then $f\otimes f$ intertwines the two braid reps. \ppm{[Nope, this direction again seems fairly obvious.]}}

\mdef \label{pa:MF-YB}
    In \cite[Theorem XIII.3.3]{kassel-} it is shown that $\MonFun(\Bcat,\catC)$ and $YB(\catC)$ are  equivalent \ft{isomorphic?}\ecr{Kassel shows equivalence (fully faithfully, essentially surjective.)} categories.  
    \ppm{[-the deep thing here is that all natural transformations can be expressed in such a simple way?]}
\\
\ppm{[One problem for me here is that the cited ref deals with a different case  - a weaker assumption on $\catC$, and only asserts equivalence. Can we have a discussion?]}

\mdef 
In fact, in     \cite[Theorem XIII.3.3]{kassel-}   
it is shown that the category of monoidal functors from $\Bcat$ to another (not-necessarily strict) monoidal category $\catC$ is equivalent \ft{isomorphic?} to the category of Yang-Baxter objects in $\catC$, 
meaning objects which satisfy a non-strict version of \eqref{def:ybobj};
and their `intertwiners', meaning an up-to-isomorphism version of (\ref{eq:intertwine}).  
\ppm{[-just so I understand - this is irrelevant for us, right? (not advocating to omit it, just trying to check my understanding.)]}

It is possible to derive the special case of Proposition \ref{pr:strictifyingmonfuns} with source $\Bcat$ by combining the results of \cite[Section XIII.3]{kassel-}.  In particular, we see that to specify an $F\in\MonFun(\Bcat,\catC)$ is to present a pair $(X,\chi)\in YB(\catC)$.  
}}

\mdef  \label{pa:identifyYBMonFun}
\emph{From now on, we use $YB(\catC)$ and $\MonFun(\Bcat,\catC)$ interchangeably.}

\ignore{{
    \ppm{[If it is only equivalence then are there not situations in which interchanging could be dangerous? Certainly we will only do it when it is safe to do so, but should we say a tad more here?]}\ecr{[I believe Fiona will make this clear.  The point is that we have \emph{defined} $\MonFun$ to be strict, and so it is actually an isomorphism.  equivalence does not require strictness.]}}}

\medskip

This 
identification
is justified 
as follows. 
\\ 
Firstly from \cite[Section XIII]{kassel-} it can be shown that:
For any monoidal category $\catT$ (strict or otherwise) there is a  \emph{equivalence} between: 1) the category of monoidal functors from $\Bcat$ to $\catT$ and 2) the category of Yang-Baxter objects of $\catT$, with the obvious non-strict generalisation of Yang-Baxter objects.  

For a strict monoidal category $\catC$ observe that the above equivalence can be promoted to an isomorphism via 
a functor 
$\Xi:\MonFun(\Bcat,\catC)\rightarrow YB(\catC)$ 
given by $\Xi(F)= (F(1),F(\sigma))$ on objects and $\Xi(\eta)=\eta_1$ on morphisms. The inverse functor $\Xi^{-1}$ may be constructed on objects using \cite[Lemma XIII.3.5]{kassel-}, and the characterisation of monoidal natural transformations in $\MonFun(\Bcat,\catC)$  from \ref{de:mon nat tranf} above.

\subsection{More on the category $\YB(\Mat)$}\label{why not mon}

In much of what follows we focus on developing the category $\MonFun(\Bcat,\Mat)\cong YB(\Mat)$.

{Notationally, since we mainly consider $\YB(\catC)$ in case $\catC = \Mat$, we will simply write 
$\EndYB(N,R)$ for $\End_{\YB(\Mat)}(N,R)$ as in (\ref{eq:deEnd}); and similarly for $\Aut_{\YB}$ and so on. 
We will also write $\Aut(N)$ for $\Aut_{\Mat}(N)$.} 

The isomorphism class of an object $(N,R)$ in $YB(\Mat)$ is precisely \[\{(N,Q^{\otimes 2}R(Q^{-1})^{\otimes 2}):Q\in \Aut(N) \}.
\] 
Similarly the endomorphisms $Q \in \EndYB(N,R)$ are the $Q\in\Aut(N)$ such that $Q\otimes Q$ commutes with $R$.

To specify a morphism $(M,S)\rightarrow (N,R)$ 
\ppmm{in $\YB(\Mat)$}
we only need a single matrix $A\in\Hom(M,N)$ 
\ppmm{$=\Mat(M,N)$}
with $(A\otimes A)S=R(A\otimes A)$.  If $R$ and $S$ are given it is computationally straightforward to find all such $A$.  This is explored further in section \ref{ss:subobjects}.

\subsection{More on the 
category $\MoonFun(\Bcat,\Mat)$}\label{why not moon}

In much of what follows we focus on 
$\MonFun(\Bcat,\Mat)\cong YB(\Mat)$, at the expense of neglecting $\MoonFun(\Bcat,\Mat)$.  Let us pause to discuss a few differences.

While any morphism in 
$\MonFun(\Bcat,\Mat)\cong \YB(\Mat)$ 
remains a morphism in $\MoonFun(\Bcat,\Mat)$, a general morphism is somewhat more involved to describe. Indeed, suppose $\eta:F\rightarrow G$ is an isomorphism in $\MoonFun(\Bcat,\Mat)$.  Define $R=F(\sigma)$ and $S=G(\sigma)$.  
Then $F(1)=G(1)$, 
and $\eta_2R\eta_2^{-1}=S$.  
Setting $N=F(1)$, 
we also have \[\eta_3(R\otimes \id_N)\eta_3^{-1}=(S\otimes \id_N)=(\eta_2R\eta_2^{-1}\otimes \id_N)\]  so that $(\eta_2^{-1}\otimes \id_N)\eta_3$ commutes with $(R\otimes\id_N)$ and analogously $(\id_N\otimes\eta_2^{-1})\eta_3 $ commutes with $(\id_N\otimes R)$. The $\eta_n$ for $n>3$ \ppmm{obey similar identities}.

Thus, to compute the isomorphism class of $F$ 
in  $\MoonFun(\Bcat,\Mat)$ 
one  
could
first find all matrices $\eta_2\in\Aut(N^2)$ such that $\eta_2R\eta_2^{-1}$ satisfies the Yang-Baxter equation.  Then 
\ppmm{for each such $\eta_2$}
find \ppmm{an} $\eta_3$ constrained as above (if it exists) and continue, to find \ppmm{an} $\eta_n$ for each $n$.  
If such a sequence can be found, then  
the representation given by  $\eta_2R\eta_2^{-1}$ is in the same class as $F$.
\\ 
This may turn out to be a finite problem for a given $R$, but we do not know.  We do construct some isomorphisms in $\MoonFun(\Bcat,\Mat)$ that are not already isomorphisms in $YB(\Mat)$, although this construction method uses $\EndYB(N,R)$, see Theorem \ref{th:ds is infinity}.  Even to describe the endomorphisms $\eta$ of $F\in\MoonFun(\Bcat,\Mat)$ with $F(\sigma)=R$ we must determine all $\eta_n$ that self-intertwine $\rho_n^R$. This amounts to understanding the decomposition of $\rho_n^R$.  These tasks can be further complicated or simplified if we restrict the target of $F$, see  
e.g. \cite{LechnerPennigWood} summarised in section \ref{sss: invol ybo}.

\section{Structure of 
$\MonFun(\Bcat,\catC)\cong YB(\catC)$ }

 In this section standard categorical concepts, structures and properties such as monoidal product, subobjects, quotient objects, and rigidity will be explored.  We will focus on the case of main interest to us, target $\catC=\Mat$, only using the general case when appropriate. Crucially, the lashing product  
(Definition \ref{de:lashing})
 renders the category $YB(\Mat)$ a monoidal category, so that we may identify monoidal subcategories amenable to classification as in \sproblem \ref{prob:2}. {For example, one could fix a particular object $(N,R)\in YB(\Mat)$ and try to classify the monoidal subcategory it generates.}

\subsection{Cabling Products}\label{ss:cabling}

In this section we review some ways of obtaining new objects in $YB(\catC)$ from given ones, at first for general monoidal categories $\catC$ and then for braid representations, i.e., for $\catC=\Mat$.  
{Nominally this section is off-topic, being construction rather than classification. But any construction which takes braid representations as input and produces braid representations as output allows us to ask, for example, the question: 
What minimal subset of braid representations generates all braid representations by application of this construction? (A notable example would be the notion of direct sum in artinian algebraic representation theory, allowing us to classify representations by classifying indecomposable representations.) }
Another perspective is that rather than being construction, it (and by `it' we now mean a certain form of cabling) is structure: indeed a key observation is that it provides the structure of a monoidal category on $YB(\catC)$ when $\catC$ is braided, crucially the case for $\catC=\Mat$.
A bigger issue with `reviewing some ways' is that it involves a choice of `ways' to review, without providing a value system for comparing and contrasting with other possible ways. From this perspective, constructing using direct sum in rep theory is easily verified to be crucial, and indeed canonical. In the present case, however, our justification lies mainly with local utility (as we shall see); and the fact that higher rep theory is generally harder and less canonical than ordinary rep theory, so we have to start somewhere! With these caveats, we now proceed.

\mdef  \label{pr:cable}
Fix a strict monoidal category $\catC$ and let $(X,\gamma)\in YB(\catC)$ be a \YangBaxterobject, as defined in \ref{de:YBC}, 
and $k\geq 1$ an integer.
Inspired \ppmm{by} the classical cabling of braids \ignore{{(see e.g. \cite[p.164]{CrowellFox63})}} there is a construction (see e.g. \cite{Wenzl90PJM}) of another \YangBaxterobject\ $(X^{\otimes k},\gamma^{(k)})$ by $k$-cabling $\gamma$.

\medskip 

For example, suppose $(X,\gamma)\in YB(\catC)$. 
 Define $\gamma_1=\gamma\otimes \id_X\otimes \id_X, \gamma_2=\id_X\otimes \gamma\otimes \id_X$ and $\gamma_3=\id_X\otimes \id_X\otimes \gamma$. Then we have the $2$-cabling $\gamma^{(2)}:=\gamma_2\gamma_1\gamma_3 \gamma_2$. See Example \ref{ex:projection} below for an instance of this.

\medskip 

Suppose in addition $\catC$ is itself braided \cite{JoyalStreet}, i.e.,  equipped with a natural isomorphism $c_{X,Y} \colon X\otimes Y\cong Y\otimes X$ satisfying (the hexagon) equations:
\[c_{X,Y\otimes Z}=(\id_Y\otimes c_{X,Z})(c_{X,Y}\otimes \id_Z)\]

\[c_{X\otimes Y, Z}=(c_{X,Z}\otimes \id_Y)(\id_X\otimes c_{Y,Z})\]

Then we can define another type of product on objects in $YB(\catC)$, blending two Yang-Baxter objects by means of the categorical braiding.\footnote{This procedure will not run afoul of Deuteronomy 22:11 as long as we don't use it to make clothes.}

\begin{defin}   \label{de:lashing}
Let $\catC$ be a strict braided monoidal category with braiding $c$ in the sense of \cite[Chapter XI]{MacLane}.  We define the \emph{lashing product} of a pair of objects $(X,\chi)$, $(Y,\gamma)\in YB(\catC)$
  to be  $(X\otimes Y,\chi\boxtimes \gamma)$ where 
\[\chi\boxtimes\gamma:=(\id_X\otimes c_{X,Y}^{-1}\otimes \id_Y)(\chi\otimes \gamma)(\id_X\otimes c_{Y,X}\otimes \id_Y)\in\catC((X\otimes Y)^{\otimes 2},(X\otimes Y)^{\otimes 2}).\]
Reading from bottom to top, $\chi\boxtimes\gamma$ can be visualised as, the following, where the under/over crossings represent $c_{Y,X}$ and $c_{X,Y}^{-1}$:

\begin{center} \begin{tikzpicture}[
braid/.cd,
number of strands=4,
line width=6pt,
strand 1/.style={magenta},
strand 2/.style={teal},
strand 3/.style={magenta},
strand 4/.style={teal},
gap=0.1,
control factor=0,
nudge factor=0,
name=symbols,
] 

\pic at (1,0) {braid={s_2^{-1}s_1-s_3 s_2}};
\node[draw, fill=white, minimum width=12mm, minimum height=8mm] 
        at (1.45, -1.7) {$\chi$};
\node[draw, fill=white, minimum width=12mm, minimum height=8mm] 
        at (3.50, -1.7) {$\gamma$};
\end{tikzpicture}
\end{center}

\end{defin}

\begin{theorem}\label{th:YBMonoidal}
   For any braided strict monoidal category $\catC$, $YB(\catC)$ is a strict monoidal category under the lashing product, with monoidal unit $(\unit,\id_\unit)$.
\end{theorem}
\begin{proof}  The functoriality of the braiding $c$ can be used to verify the Yang-Baxter equation (\ref{eq:YBE}) as follows, which shows the lashing product is a bifunctor on $YB(\catC)$:
\begin{center}\begin{tikzpicture}[
braid/.cd,
number of strands=6,
line width=6pt,
strand 1/.style={magenta},
strand 2/.style={teal},
strand 3/.style={magenta},
strand 4/.style={teal},
strand 5/.style={magenta},
strand 6/.style={teal},
gap=0.1,
control factor=0,
nudge factor=0,
name=symbols,
] 
\pic {braid={s_2^{-1}s_1-s_3 s_2s_4^{-1}s_3-s_5s_2^{-1}s_4 s_1-s_3s_2}};
\node[draw, fill=white, minimum width=12mm, minimum height=8mm] 
        at (0.45, -1.7) {$\chi$};
\node[draw, fill=white, minimum width=12mm, minimum height=8mm] 
        at (2.50, -1.7) {$\gamma$};

\node[draw, fill=white, minimum 
width=12mm, minimum height=8mm] 
        at (0.45, -7.7) {$\chi$};
\node[draw, fill=white, minimum 
        width=12mm, minimum height=8mm] 
        at (2.50, -4.7) {$\chi$};

        \node[draw, fill=white, minimum width=12mm, minimum height=8mm] 
        at (4.50, -4.7) {$\gamma$};

               \node
        at (6.50, -4.7) {\huge{=}};

        \node[draw, fill=white, minimum 
        width=12mm, minimum height=8mm] 
        at (2.50, -7.7) {$\gamma$};

\end{tikzpicture}$\quad\quad$
\begin{tikzpicture}[
braid/.cd,
number of strands=6,
line width=6pt,
strand 1/.style={magenta},
strand 2/.style={teal},
strand 3/.style={magenta},
strand 4/.style={teal},
strand 5/.style={magenta},
strand 6/.style={teal},
gap=0.1,
control factor=0,
nudge factor=0,
name=symbols,
] 

\pic at (7,0) {braid={s_4^{-1}s_3-s_5 s_4s_2^{-1}s_1-s_3s_4^{-1}s_2 |s_3-s_5s_4}};

\node[draw, fill=white, minimum width=12mm, minimum height=8mm] 
        at (9.45, -1.7) {$\chi$};
\node[draw, fill=white, minimum width=12mm, minimum height=8mm] 
        at (11.50, -1.7) {$\gamma$};

\node[draw, fill=white, minimum 
width=12mm, minimum height=8mm] 
        at (9.5, -7.7) {$\chi$};
\node[draw, fill=white, minimum 
        width=12mm, minimum height=8mm] 
        at (7.50, -4.7) {$\chi$};

        \node[draw, fill=white, minimum width=12mm, minimum height=8mm] 
        at (9.45, -4.7) {$\gamma$};

        \node[draw, fill=white, minimum 
        width=12mm, minimum height=8mm] 
        at (11.50, -7.7) {$\gamma$};

\end{tikzpicture}
\end{center}

 For $(X,\chi),(Y,\gamma)$ and $(Z,\zeta) \in YB(\catC)$ one must check that $(\chi\boxtimes\gamma)\boxtimes \zeta =\chi\boxtimes(\gamma\boxtimes \zeta)$.  This is another exercise in diagrammatic calculus, which relies crucially on the functoriality of the braiding in $\catC$.
\end{proof}

\mdef \label{ybmat is braided}
If $\catC$ is \emph{symmetrically} braided, i.e., $c_{X,Y}^{-1}=c_{Y,X}$ then $YB(\catC)$ is itself braided: the braiding \[A_{(X,\chi),(Y,\gamma)}:(X\otimes Y,\chi\boxtimes \gamma)\cong (Y\otimes X,\gamma\boxtimes\chi)\] is the morphism $c_{X,Y}=c_{Y,X}^{-1}\in\catC(X\otimes Y,Y\otimes X)$.  One must check that \[(c_{X,Y}\otimes c_{X,Y})\chi\boxtimes \gamma =\gamma\boxtimes \chi(c_{X,Y}\otimes c_{X,Y}).\] 
This is again most easily seen by drawing the picture: the morphism $c_{X,Y}\otimes c_{X,Y}$ conjugates $\chi\otimes \gamma$ over to a morphism with the over/under crossings switched and the $\chi$ and $\gamma$ interchanged.  But since $c$ is symmetric this is the same as $\gamma\boxtimes\chi$. 

\mdef \label{ybmat is monoidal} The category $\Mat$ introduced in (\ref{de:Mat}) has the structure of a (symmetric) braided monoidal category, as is well-known: the braiding is the standard flip $P_{N,M}:N\otimes M\rightarrow M\otimes N$. { This implies that if $A\in\Mat(M_1,M_2)$ and $B\in\Mat(N_1,N_2)$ then we have, by functoriality, \[P_{M_2,N_2}(A\otimes B)=(B\otimes A)P_{M_1,N_1}.\]} Thus for any  $(N,R),(M,S)\in YB(\Mat)$ one may define $(N,R)\boxtimes (M,S)=(NM,[R\boxtimes S])$ as the lashing product coming from $P_{N,M}$. This particular case of lashing has been studied recently in \cite{Chouraqui}, where it is called the Tracy-Singh product.

\medskip

\subsection{Subobjects in $YB(\Mat)$}\label{ss:subobjects}

By Definition \ref{de:YBC}, morphism sets in $YB(\Mat)$ are of the following form:
\[\HomYB((M,S),(N,R))=\{Q\in\Mat(M,N): (Q\otimes Q) S=R(Q\otimes Q)\}.\]    
Since $\HomYB((M,S),(N,R))$ is \emph{not} generally closed under addition, $YB(\Mat)$ is not an abelian category.  Moreover, $YB(\Mat)$ does not have a terminal object or initial object so we cannot use the general categorical notion of simple.  However, the morphisms are matrices so we can still meaningfully discuss these notions in this context.   

\mdef   \label{de:Aut}
Let $F: \Bcat\rightarrow\Mat$ be a braid representation, determined by $F(1)=N$ and $F(\sigma)=R$. 
Note that  $\EndYB(N,R)$ consists of those $Q\in\Mat(N,N)$ such that $Q\otimes Q$ commutes with $R$. 
Meanwhile $\AutYB(N,R)$ denotes the invertible endomorphisms.

If $(N,R)$ are as above, then by default we will write $\Aut(N,R)$ for $\AutYB(N,R)$, and similarly for $\End(N,R)$. 

\mdef 
In any category $\catC$ one defines a \emph{monomorphism} to be a morphism $f\in\catC(X,Y)$ that is left-cancellable, i.e., for any $g,h\in \catC(Y,Z)$, if $f\circ g=f\circ h$ then $g=h$.  In the category $\Mat$ a morphism $Q\in\Mat(M,N)$ with $M\leq N$ is left-cancellable precisely when it is full rank, i.e. rank $M$. 
In this case $Q$ has  
{left inverses}, one of which is the Moore-Penrose pseudo-inverse defined by \beq\label{MPinv} Q^+:=(Q^\dag Q)^{-1}Q^\dag.\eq    
Thus the same holds for monomorphisms in $YB(\Mat)$.   Similarly, \emph{epimorphisms} are right-cancellable, so that $P\in\Mat(N,M)$ with $N\geq M$ is an epimorphism if $P$ is full rank, and again, the same is true for epimorphisms in $YB(\Mat)$.

    Suppose that $Q\in\Hom((M,S),(N,R))$ and $Q'\in\Hom((M,T),(N,R))$ are monomorphisms with target $(N,R)$.  Then $Q$ and $Q'$ are \emph{equivalent} if there exists an isomorphism $A\in\Hom((M,T),(M,S))$ such that $Q'=QA$.  For epimorphisms $P\in\Hom((N,R),(M,S))$ and $P'\in\Hom((N,R),(M,T))$ we require such an isomorphism $A\in\Hom((M,S),(M,T))$ such that $AP=P'$.

\begin{defin}
    A \emph{subobject} of $(N,R)\in YB(\Mat)$ is an equivalence class $[Q,M,S]$ of triples $(Q,M,S)$ where $Q$ is a monomorphism from $(M,S)$ to $(N,R)$.  A \emph{quotient object} of $(N,R)$ is an equivalence class $[M,S,P]$ where $P$ is an epimorphism from $(N,R)$ to $(M,S)$.
\end{defin}
If $[Q,M,S]$ is a subobject of $(N,R)$ we will say that $(M,S)$ `appears as a subobject' of $(N,R)$.

\begin{example}
    A 1-dimensional subobject of $(N,R)$ corresponds to an equivalence class of triples $(v,1,\lambda)$ where $v$ is a (column) vector such that $v\otimes v$ is a (right) eigenvector of $R$ of eigenvalue $\lambda$.  In this case $(w,1,\lambda)\in[v,1,\lambda]$ if $w=\alpha v$ for some $\alpha\neq 0$.  Similarly, $1$-dimensional quotient objects correspond to left eigenvectors of the form $x\otimes x$.
\end{example}

The classical linear algebraic notion of invariant subspaces can be incorporated into this language as follows:
\begin{lemma}\label{lem: invariant is subobject}
   Let $(N,R)\in YB(\Mat)$, and let
    $W\otimes W$ be a right (resp. left) $R$-invariant subspace of $\C^N\otimes \C^N$ with $\dim(W)=M$.  Then there is a {subobject} $[Q,M,S]$ (resp. quotient object $[M,S,P]$)  such that $Q(\C^M)=W$.
\end{lemma}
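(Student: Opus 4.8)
The plan is to take a linear-algebraic invariant subspace and repackage it as a monomorphism (resp. epimorphism) in $YB(\Mat)$ by choosing a suitable injection whose image is the given subspace, and then checking the intertwining condition \eqref{eq:intertwine}. First I would treat the subobject case. Let $W\subseteq\C^N$ be the $M$-dimensional subspace with $W\otimes W$ right $R$-invariant; choose a matrix $Q\in\Mat(M,N)$ of full rank $M$ whose columns form a basis of $W$, so $Q(\C^M)=W$ and $Q$ is a monomorphism in $\Mat$ (full rank, hence left-cancellable, with Moore--Penrose left inverse $Q^+$ as in \eqref{MPinv}). Then $Q\otimes Q\in\Mat(M^2,N^2)$ has image $W\otimes W$, which is $R$-invariant, so $R(Q\otimes Q)$ factors through the image of $Q\otimes Q$; since $Q\otimes Q$ is injective there is a unique matrix $S\in\Mat(M^2,M^2)$ with $R(Q\otimes Q)=(Q\otimes Q)S$, explicitly $S=(Q\otimes Q)^+R(Q\otimes Q)=(Q^+\!\otimes Q^+)R(Q\otimes Q)$.

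Next I would verify that $(M,S)$ is genuinely a Yang--Baxter object, i.e. that $S$ is invertible and satisfies \eqref{def:ybobj}. Invertibility of $S$ follows because $R$ restricts to an isomorphism of the invariant subspace $W\otimes W$ (as $R$ is invertible, it maps $W\otimes W$ injectively into itself, hence onto), and $S$ is conjugate to that restriction via the iso $Q:\C^M\xrightarrow{\sim}W$; for the Yang--Baxter relation, conjugating the identity \eqref{def:ybobj} for $R$ by $Q^{\otimes 3}$ (using $Q^+Q=\id$ repeatedly to cancel the interior factors, and that each of $S\otimes\id_M$, $\id_M\otimes S$ is carried to the corresponding operator on $W\otimes\C^N$ etc.) gives the relation for $S$. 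This is the step that requires a little care with the bookkeeping of which tensor slots the $Q$'s sit in, but it is a routine diagram chase. By construction $Q$ then lies in $\HomYB((M,S),(N,R))$ and is a monomorphism there, so $[Q,M,S]$ is the desired subobject with $Q(\C^M)=W$.

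For the quotient-object case I would dualise: if $W\otimes W$ is left $R$-invariant, choose a full-rank epimorphism $P\in\Mat(N,M)$ with $\ker(P)=W^{\perp}$ (equivalently, $P$ realizes the quotient $\C^N\twoheadrightarrow\C^N/W^{\perp}\cong\C^M$, or dually one may take $P=Q^+$ for $Q$ as above with columns spanning $W$). Left $R$-invariance of the row span $W\otimes W$ says $(P\otimes P)R$ factors as $T(P\otimes P)$ for a unique $T\in\Mat(M^2,M^2)$; the same conjugation argument as before shows $(M,T)\in YB(\Mat)$ and $P\in\HomYB((N,R),(M,T))$ is an epimorphism, giving the quotient object $[M,T,P]$. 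Alternatively, and more cheaply, one can deduce the quotient case from the subobject case by applying the transpose/flip symmetry recorded in \ref{pa:Bopp} and \ref{pa:Bflipp}: a left-invariant $W\otimes W$ for $R$ becomes a right-invariant subspace for $R^{tr}$, produce the subobject there, and transpose back.

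The main obstacle, and the only genuinely non-formal point, is confirming that the restricted/corestricted operator $S$ (resp. $T$) is itself invertible and still satisfies the Yang--Baxter equation rather than merely being a morphism of braid representations on the nose --- i.e. that passing to an invariant subspace of a Yang--Baxter operator again yields a Yang--Baxter \emph{object} in the sense of Definition \ref{de:YBC}, with invertibility being the subtle part since invertibility is not automatically inherited by restriction to an invariant subspace unless one uses that $R$ is an isomorphism of the finite-dimensional space $W\otimes W$ into itself. Everything else is the standard translation between invariant subspaces and (co)restrictions of linear maps, packaged through the monomorphism/epimorphism description of subobjects in $YB(\Mat)$ established just above.
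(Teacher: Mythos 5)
Your proposal is correct and follows essentially the same route as the paper: choose a full-rank $Q$ whose columns are a basis of $W$, set $S=(Q^+\otimes Q^+)R(Q\otimes Q)$ using the Moore--Penrose left inverse, and treat the quotient case analogously/dually. The paper's own proof is terser --- it simply identifies $S$ with the restriction of $R$ to $W\otimes W$ and does not spell out the invertibility of $S$ or the verification of the Yang--Baxter relation via cancelling $Q^{\otimes 3}$, so your extra checks are a welcome (and correct) filling-in of details rather than a different argument.
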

\begin{proof}  
    Let $[w_1,\ldots,w_M]$ be an ordered basis of $W\subset \C^N$, and let $Q$ be the {full rank} $N\times M$ matrix with columns $w_i$.  To be explicit, $Q$ is a monomorphism from $M$ to $N$.  The Moore-Penrose pseudo-inverse $Q^+$ satisfies $Q^+Q=\Id_M$, and $QQ^+|_{W}=\Id_W$.  In particular the restriction of $Q^+$ to $W$ is injective.  It follows that $S:=(Q^+\otimes Q^+)R(Q\otimes Q)$ is the restriction of $R$ to $W\otimes W$. 

    The left/quotient version is completely analogous.
\end{proof}

While some $(M,S)$ may appear as both a subobject and quotient object of $(N,R)$, this doesn't always happen. 
\begin{example}\label{ex:rightnotleft}
The \YangBaxterobject\  $(2,R)$ with $R=\left[ \begin {array}{cccc} 5&0&0&0\\ \noalign{\medskip}0&3&2&0
\\ \noalign{\medskip}0&5&0&0\\ \noalign{\medskip}0&5&2&-2\end {array}
 \right]$ has $(1,[-2])$ appearing as a subobject, but not as a quotient object.  This can be seen by observing that $\ket{22}$ is a right eigenvector, but there is no left eigenvector of the form $v\otimes v$ with eigenvalue $-2$. This $R$ is isomorphic to $R'=\left[ \begin {array}{cccc} 5&0&0&0\\ \noalign{\medskip}0&3
&2&0\\ \noalign{\medskip}0&5&0&0\\ \noalign{\medskip}7&0&0&-2
\end {array} \right]$ by the isomorphism 
$A=\left[\begin {array}{cc} 1&0\\ \noalign{\medskip}-1&2\end {array}
 \right],$  where $R'$ is the form appearing in the classification of \cite{Hietarinta1992}, see section \ref{ss:jarmo}.

\end{example}

\begin{defin}
    We will say $(N,R)\in YB(\Mat)$ is \emph{simple} if it has no proper subobjects nor quotient objects, i.e., only $(N,R)$ itself up to isomorphism. 
  
\end{defin}

 The definition of simple should be compared with that of \emph{indecomposable}, described in \cite[Definition 2.5]{etingof_et_al} in the set-theoretical setting.  The general definition is that $(N,R)\in YB(\Mat)$ is \emph{right decomposable} if there exists a non-trivial decomposition $\C^N=V\oplus W$ such that $W$ and $V$ are each right $R$-invariant subspaces, and is otherwise \emph{right indecomposable}.  Left indecomposable is analogously defined.  By Lemma \ref{lem: invariant is subobject} $(N,R)$ is right decomposable if there are subobjects $[Q_1,S_1,M_1],[Q_2,S_2,M_2]$ of $(N,R)$ such that $\C^N=Q_1(\C^{M_1})\oplus Q_2(\C^{M_2})$.  Note, however, that $(N,R)$ is not determined by $[Q_1,S_1,M_1]$ and $[Q_2,S_2,M_2]$--there may be many choices of $(N,R')\in YB(\Mat)$ with complementary subobjects $[Q_1,S_1,M_1]$ and $[Q_2,S_2,M_2]$.  The behaviour of $R'$ on $(V\otimes W)\oplus (W\otimes V)$ is additional information. Moreover, it is possible to be right decomposable but left indecomposable, as Example \ref{ex:rightnotleft} illustrates: 
 the subobjects $[(1 \; 0\;0\;0)^T, 1,[5]]$ and $[(0\; 0\;0\;1)^T, 1,[-2]]$ give a right decomposition, whilst there is only one left eigenvector of the form $v\otimes v$, and a decomposition requires two.

\begin{defin} Given $(M_1,S_1),(M_2,S_2)\in YB(\Mat)$, any  $(M_1+M_2,R)\in YB(\Mat)$ that is right decomposable with corresponding subobjects $[Q_1,M_1,S_1],[Q_2,M_2,S_2]$ is called a \emph{lift} of $(M_1,S_1),(M_2,S_2)$.  Setting $V_i:=Q_i(\C^{M_i})$, a lift $(M_1+M_2,R)\in YB(\Mat)$ is \emph{two-sided} if in addition we have $(V_1\otimes V_2)\oplus (V_2\otimes V_1)$ is $R$-invariant.
\end{defin}
\begin{remark}
    {Note that by the block structure of a two-sided lift $R$ on the $V_i\otimes V_i$ and $(V_1\otimes V_2)\oplus (V_2\otimes V_1)$ subspaces, it is clear that $R^T$ has the same block structure. 
 In particular the spaces $V_i\otimes V_i$ are $R^T$ invariant, hence yield quotient objects as well. }
\end{remark}

Classifying lifts (two-sided or otherwise) of $(M_1,S_1),(M_2,S_2)\in YB(\Mat)$ can be computationally difficult. One always exists: there is a natural notion of direct sum of objects, found in \cite{LechnerPennigWood}, see also \cite{Hietarinta1993}.  We expand their definition slightly, for any $R\in\Mat^N(2,2)$ and $S\in\Mat^M(2,2)$ and $\mu\neq 0$:
\[(R\boxplus_\mu S)\ket{ij}=\begin{cases}
    R\ket{ij}, & 1\leq i,j\leq N\\
    S(\ket{i-N}\otimes\ket{j-N}), & N+1\leq i,j\leq N+M\\
    \mu\ket{ji}, & \textrm{else}.
\end{cases}
\]

Extending the results of \cite{LechnerPennigWood} we have:
\begin{lemma}
If $(N,R),(M,S)\in YB(\Mat)$  then $(N+M,R\boxplus_\mu S)\in YB(\Mat)$.\footnote{It is not hard to see that this construction applies more generally to any $YB(\catC)$ where $\catC$ is itself braided, much like the lashing product.  We will not need this generality.} 
\end{lemma}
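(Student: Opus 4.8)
The plan is to exploit the block structure of $R\boxplus_\mu S$ coming from the decomposition $\C^{N+M}=V\oplus W$, where $V=\mathrm{span}\{\ket{1},\dots,\ket{N}\}$ and $W=\mathrm{span}\{\ket{N+1},\dots,\ket{N+M}\}$. Under the induced decomposition $\C^{N+M}\otimes\C^{N+M}=(V\otimes V)\oplus(V\otimes W)\oplus(W\otimes V)\oplus(W\otimes W)$, the defining formula says precisely that $R\boxplus_\mu S$ restricts to $R$ on $V\otimes V$, to $S$ (via the shift identification $\ket{N+i}\leftrightarrow\ket{i}$) on $W\otimes W$, and to $\mu$ times the flip $\ket{ij}\mapsto\ket{ji}$ on $(V\otimes W)\oplus(W\otimes V)$. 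In particular $R\boxplus_\mu S$ is block diagonal for the coarser decomposition into $V\otimes V$, $W\otimes W$ and $(V\otimes W)\oplus(W\otimes V)$, so invertibility is immediate: $R$ and $S$ are invertible because $(N,R),(M,S)\in YB(\Mat)$, and $\mu\cdot(\mathrm{flip})$ is invertible because $\mu\neq 0$.

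For the Yang--Baxter equation, write $\chi=R\boxplus_\mu S$. The key structural observation is that $\chi$ preserves the multiset of ``colours'' from $\{V,W\}$ on a pair of strands: it sends $V\otimes V$ into $V\otimes V$, $W\otimes W$ into $W\otimes W$, and interchanges $V\otimes W$ with $W\otimes V$. Consequently, on $\C^{N+M}\otimes\C^{N+M}\otimes\C^{N+M}$ both $\chi\otimes\id$ and $\id\otimes\chi$ preserve the direct-sum decomposition indexed by the four colour-multisets $\{V,V,V\}$, $\{V,V,W\}$, $\{V,W,W\}$, $\{W,W,W\}$. Hence it suffices to verify $(\chi\otimes\id)(\id\otimes\chi)(\chi\otimes\id)=(\id\otimes\chi)(\chi\otimes\id)(\id\otimes\chi)$ on each of these four graded pieces separately.

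On the $\{V,V,V\}$ piece, namely $V^{\otimes 3}$, the operators $\chi\otimes\id$ and $\id\otimes\chi$ restrict to $R\otimes\id$ and $\id\otimes R$, so the identity to be checked is exactly the Yang--Baxter equation for $R$, which holds by hypothesis; the $\{W,W,W\}$ piece is identical with $S$ in place of $R$. This leaves the two genuinely mixed pieces. The $\{V,V,W\}$ piece decomposes as $(V\otimes V\otimes W)\oplus(V\otimes W\otimes V)\oplus(W\otimes V\otimes V)$, and I would compute both sides of the braid relation on each of these three summands directly: in each case exactly one tensor slot carries the odd colour $W$, each side applies $R$ exactly once (to the two $V$-slots) and otherwise merely transposes colours with a factor of $\mu$, so that a short bookkeeping check --- tracking the position of the $W$-slot and the resulting overall power $\mu^2$ --- shows the two sides coincide on each summand. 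The $\{V,W,W\}$ piece is handled identically with the roles of $R$ and $S$, and of $V$ and $W$, interchanged.

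The only step requiring genuine care is this last explicit computation on the mixed pieces, where one must keep track of the location of the odd-coloured strand and of the powers of $\mu$ through the six factors of the braid relation; everything else (invertibility, the colour-grading reduction, the two pure pieces) is immediate. This is therefore the main, if mild, obstacle, and it is precisely where the symmetric occurrence of $\mu$ in the ``else'' clause of the definition of $\boxplus_\mu$ is used.
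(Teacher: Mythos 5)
Your proof is correct, but it takes a different route from the paper. The paper's own proof is essentially a citation: it notes that the case $\mu=1$ is verified in the reference [LechnerPennigWood] and then observes that, since each side of the Yang--Baxter equation \eqref{eq:YBE} is a homogeneous (degree-three) monomial in the entries of $\chi$, the count of $\mu$-factors matches on the two sides, which extends the conclusion to arbitrary $\mu\neq 0$. You instead give a self-contained verification: grading $(\C^{N+M})^{\otimes 3}$ by colour-multisets in $\{V,W\}$, reducing the pure sectors to the hypotheses on $R$ and $S$, and checking the mixed sectors by tracking the odd-coloured slot. Your bookkeeping claim is right --- on, say, the $\{V,V,W\}$ sector each side of the braid relation applies $R$ exactly once to the two $V$-slots and performs exactly two colour transpositions, so both sides equal $\mu^2$ times the same operator --- and this computation is precisely the concrete content behind the paper's terse ``homogeneity'' remark. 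What your approach buys is independence from the external reference and an explicit explanation of why the symmetric placement of $\mu$ in the ``else'' clause matters; what the paper's approach buys is brevity, at the cost of leaving the $\mu=1$ verification to the literature. Neither argument has a gap.
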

\begin{proof}
One only needs to verify that $R\boxplus_\mu S$ satisfies \eqref{eq:YBE}.  This is done in the case of $\mu=1$ in \cite{LechnerPennigWood}, and the observation that \eqref{eq:YBE} is homogeneous allows for general $\mu\neq 0$. 
\end{proof}

   \mdef Consider the following inductive process: Let $CD_1$ consist of all $(1,[a])\in YB(\Mat)$.  Let $CD_2$ be the set of all two-sided lifts of $(1,[a_1])$ and $(1,[a_2])$. It is not hard to see that any $(2,R)\in CD_2$ is in $\Match^2(2,2)$.  Now  define $CD_N$ to be the set of all two-sided lifts of $(N-1,S)\in CD_{N-1}$ and some $(1,[b])\in YB(\Mat)$. Computationally this is a matrix completion problem: to find the matrices in $CD_N$ we must solve for $N^2$ variables.  \begin{question}
       What is the totality of such $CD_N$? 
   \end{question}

\begin{theorem}\label{thm:endoissub/quo}
 Objects $(M,S)\in YB(\Mat)$ that are both subobjects and quotient objects of $(N,R)$ correspond to rank $M$ endomorphisms $A\in \End(N,R)$.
\end{theorem}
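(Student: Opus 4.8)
The plan is to exhibit a bijection between (isomorphism classes of) objects $(M,S)$ that appear as both a subobject and a quotient object of $(N,R)$ and rank-$M$ endomorphisms $A\in\End(N,R)$, in both directions.

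First I would go from an endomorphism to a sub/quotient object. Suppose $A\in\End(N,R)$ has rank $M$, so $A\otimes A$ commutes with $R$. Let $W=A(\C^N)\subseteq\C^N$, which has $\dim W=M$, and let $V=\ker A$, which has $\dim V=N-M$. Since $A\otimes A$ commutes with $R$, the image $W\otimes W=(A\otimes A)(\C^N\otimes\C^N)$ is $R$-invariant (apply $R$, then pull out $A\otimes A$), and likewise the kernel of $A\otimes A$ is $R$-invariant; but $\ker(A\otimes A)=(V\otimes\C^N)+(\C^N\otimes V)$, whose `diagonal' pullback gives that $V\otimes V\subseteq\ker(A\otimes A)$ is $R$-invariant, so $R^{T}$ leaves $W\otimes W$ invariant (the complement of a diagonal invariant subspace on the transpose side — more carefully, $W\otimes W$ is a complement to the $R$-invariant $\ker(A\otimes A)$ inside $\C^N\otimes\C^N$ precisely when... ). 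Rather than fuss, the cleanest route: factor $A = Q\circ P$ where $P\in\Mat(N,M)$ is the surjection onto a coordinatisation of $W$ and $Q\in\Mat(M,N)$ is the inclusion of $W$, both full rank. Using $Q^{+}$ and $P^{+}$ one shows $S:=(P\otimes P)R(Q\otimes Q)$ defines a Yang--Baxter object $(M,S)$, that $Q$ is a monomorphism $(M,S)\to(N,R)$ (since $W\otimes W$ is $R$-invariant by Lemma \ref{lem: invariant is subobject}), and that $P$ is an epimorphism $(N,R)\to(M,S)$ (one must check $W\otimes W$ is also $R^{T}$-invariant, equivalently that $R$ maps $(\ker A\otimes\C^N + \C^N\otimes\ker A)$ to itself, which follows again from $(A\otimes A)R=R(A\otimes A)$). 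Thus $(M,S)$ appears as both a subobject and a quotient object.

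Conversely, suppose $[Q,M,S]$ is a subobject and $[M,S',P]$ is a quotient object with $(M,S')\cong(M,S)$ via an isomorphism $B\in\Aut$; replacing $P$ by $B^{-1}P$ (still an epimorphism to $(M,S)$) we may assume $S=S'$. Then $A:=Q\circ P\in\Mat(N,N)$ satisfies $(A\otimes A)R = (Q\otimes Q)(P\otimes P)R = (Q\otimes Q)S(P\otimes P) = R(Q\otimes Q)(P\otimes P) = R(A\otimes A)$ using that $Q$ and $P$ intertwine, so $A\in\End(N,R)$; and $\mathrm{rank}(A)=M$ since $P$ is surjective and $Q$ injective. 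This produces the endomorphism, and one checks the two constructions are mutually inverse up to the relevant equivalences (subobjects/quotient objects up to isomorphism of $(M,S)$, and endomorphisms up to the ambiguity $A\mapsto A$ — actually a rank-$M$ idempotent-like datum; more precisely two sub/quo data give the same $A$ iff they differ by the isomorphism used to glue, so the correspondence is between isomorphism classes of such $(M,S)$ together with their embedding/projection data and endomorphisms $A$, matching the statement).

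The main obstacle I anticipate is the bookkeeping on the quotient side: verifying that the same subspace $W\otimes W$ that is $R$-invariant (giving the subobject) is simultaneously $R^{T}$-invariant (giving the quotient object), and that the $S$ obtained from the subobject construction agrees — up to the allowed isomorphism — with the $S$ obtained from the quotient construction, so that $A=QP$ is well-defined as an endomorphism rather than merely a map intertwining two a priori different copies of $(M,S)$. The transpose/pseudo-inverse identities $Q^{+}Q=\Id_M$, $PP^{+}=\Id_M$, together with the commutation $(A\otimes A)R=R(A\otimes A)$, are exactly what make this go through; the rest is the routine rank and full-rank-factorisation algebra already used in Lemma \ref{lem: invariant is subobject}.
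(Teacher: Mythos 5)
Your plan is essentially the paper's proof: factor the rank-$M$ endomorphism as $A=QP$ with $Q,P$ full rank, use $(A\otimes A)R=R(A\otimes A)$ to make $W\otimes W$ $R$-invariant so that $Q$ yields a subobject and $P$ a quotient object, and conversely set $A=QP$ for a given sub/quotient pair. One slip to fix: with $A=QP$ one has $PQ\neq\Id_M$ in general (e.g.\ $A$ nilpotent forces $PQ=0$), so $S$ should be $(Q^{+}\otimes Q^{+})R(Q\otimes Q)$ rather than $(P\otimes P)R(Q\otimes Q)$; the quotient relation $S(P\otimes P)=(P\otimes P)R$ then follows by left-cancelling the injective $Q\otimes Q$ in $(Q\otimes Q)(P\otimes P)R=R(A\otimes A)=(Q\otimes Q)S(P\otimes P)$, so the separate $R^{T}$-invariance worry you raise is not needed.
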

\begin{proof} First suppose that $A\in\End(N,R)$ has rank $M$, i.e. $R(A\otimes A)=(A\otimes A)R$.  Let $Q$ be an $N\times M$ matrix whose columns are a basis for the column space, $W$, of $A$.  Note that $W\otimes W$ is $R$-invariant.  By solving the equation $QX=A$ column by column (or using a left inverse of $Q$) we obtain an $M\times N$ matrix $P$ such that $QP=A$.  Note that $P$ has rank $M$.  It follows that $R(Q\otimes Q)=(Q\otimes Q)S$ for some matrix $S$.  By comparing ranks we see that $S$ is invertible, and, indeed, $S=Q^+\otimes Q^+ R Q\otimes Q$, where $Q^+$ is any left pseudo-inverse of $Q$. Observe that \[(Q\otimes Q)(P\otimes P)R=R(QP\otimes QP)=(Q\otimes Q)S(P\otimes P) \] and $Q$ has a left inverse, so that $S(P\otimes P)=(P\otimes P)R$.  Therefore $S$ is a quotient object of $R$ as well. 

For the converse, suppose that $[Q,M,S]$ is a subobject of $(N,R)$ and $[M,S,P]$ is a quotient object.  In particular $Q$ and $P$ are full rank.  The $N\times N$ matrix $QP$ has rank $M$, since $P$ is surjective and $Q$ is injective.  Moreover $(QP\otimes QP)$ commutes with $R$, i.e. $QP\in\End(N,R)$. 
\end{proof}

Theorem \ref{thm:endoissub/quo} suggests an algorithm for finding subobjects (and quotient objects) of a given $(N,R)$.  It is computationally straightforward to compute $\End(N,R)$, so one simply extracts a basis for the column space of some $A\in\End(N,R)$ of rank $M$, places them column-wise in a matrix $Q$, and computes $S:=(Q^+\otimes Q^+)R(Q\otimes Q)$.  Then $(Q,M,S)$ yields a subobject.  Similarly one constructs quotient objects by taking a basis of the row-space of such an $A$.
The following example illustrates this approach, using an object in $YB(\Mat)$ constructed via $2$-cabling.

\begin{example}\label{ex:projection}
{First note that the matrix}
\[R'=\left[ \begin {array}{cccc} 1&0&0&0\\ \noalign{\medskip}0&1+x&-x&0
\\ \noalign{\medskip}0&1&0&0\\ \noalign{\medskip}0&0&0&1\end {array}
 \right] \]  
{gives a braid representation for any $x$. 
This is case $f$ of the $\Match^2$ classification \cite{MR1X} with parameter $\alpha=1$, thus a simple 
deformation of the basic flip solution; 
dual to the usual action of $U_qsl_2$ on tensor space for a suitable $q$.
 In other words $(2,R')\in YB(\Mat)$, with $R'\in\Match^2(2,2)$. 
 The eigenvalues are $1,1,1,x$
- multiplicities corresponding to the dimensions of the spin-1 and spin-0 representations  on the $U_q sl_2$ side.} 
Define $R=R'_2R'_1R'_3R'_2$, i.e., the $2$-cabling ${R'}^{(2)}$ of $R'$
as in (\ref{pr:cable}).   
We have $(4,R)\in YB(\Mat)$, with eigenvalues and their multiplicities: 
$[[1, 5], [-x^2, 3], [-x, 4], [-x^3, 1], [x, 3]]$.  
Setting $y=x+1$ we have, explicitly:
\beq   \label{eq:Rcable2}
R={\footnotesize \left[ \begin {array}{cccccccccccccccc} 1&0&0&0&0&0&0&0&0&0&0&0&0&0&0
&0\\ \noalign{\medskip}0&y&-x y &0&{x}^{2}&0&0&0&0&0
&0&0&0&0&0&0\\ \noalign{\medskip}0&y&-x y &0&0&0&0&0
&{x}^{2}&0&0&0&0&0&0&0\\ \noalign{\medskip}0&0&0& y ^
{2}(1-x)&0&-x y &{x}^{2} y &0&0&{x}^{2} y &-{x}^{3}  y
 &0&{x}^{4}&0&0&0\\ \noalign{\medskip}0&1&0&0&0&0&0&0&0&0&0&0&0
&0&0&0\\ \noalign{\medskip}0&0&0&y&0&-x&0&0&0&0&0&0&0&0&0&0
\\ \noalign{\medskip}0&0&0&y&0&0&0&0&0&-x&0&0&0&0&0&0
\\ \noalign{\medskip}0&0&0&0&0&0&0&y&0&0&0&-x y &0&
{x}^{2}&0&0\\ \noalign{\medskip}0&0&1&0&0&0&0&0&0&0&0&0&0&0&0&0
\\ \noalign{\medskip}0&0&0&y&0&0&-x&0&0&0&0&0&0&0&0&0
\\ \noalign{\medskip}0&0&0&y&0&0&0&0&0&0&-x&0&0&0&0&0
\\ \noalign{\medskip}0&0&0&0&0&0&0&y&0&0&0&-x y &0&0
&{x}^{2}&0\\ \noalign{\medskip}0&0&0&1&0&0&0&0&0&0&0&0&0&0&0&0
\\ \noalign{\medskip}0&0&0&0&0&0&0&1&0&0&0&0&0&0&0&0
\\ \noalign{\medskip}0&0&0&0&0&0&0&0&0&0&0&1&0&0&0&0
\\ \noalign{\medskip}0&0&0&0&0&0&0&0&0&0&0&0&0&0&0&1\end {array}
 \right] }
\eq

Aside: As noted, $R'$ is CC with $F(1)=1$. 
Comparing $R$ with (\ref{eq:F12shape}) we see that the shape is the same. 
\\

{\em Claim}. Starting with $(2,R')\in YB(\Mat)$ with $R' \in \Match^2(2,2)$ and {2-}cabling we obtain $R \in$ {$\Match^2(4,4)$, so in particular we have $(4,R)\in YB(\Mat)$ with $R\not\in \Match^4(2,2)$.}

\medskip 

 The following rank $3$ matrix is in $\End(4,R)$: 
 \[A:= \left[ \begin {array}{cccc} 1&0&0&0\\ \noalign{\medskip}0&\frac{1}{1-x}&{\frac {x}{x-1}}&0\\ \noalign{\medskip}0&\frac{1}{1-x}&{\frac {x}{x-1}}&0\\ \noalign{\medskip}0&0&0&1
\end {array} \right] \]
Since the columns of $A\otimes A$ span a $9$-dimensional $R$-invariant subspace, we extract a basis and place them column-wise, and compute the pseudo-inverse: \[Q:=\left[ \begin {array}{ccc} 1&0&0\\ \noalign{\medskip}0&1&0
\\ \noalign{\medskip}0&1&0\\ \noalign{\medskip}0&0&1\end {array}
 \right], \quad Q^+= \left[ \begin {array}{cccc} 1&0&0&0\\ \noalign{\medskip}0&{\frac{1}{2
}}&{\frac{1}{2}}&0\\ \noalign{\medskip}0&0&0&1\end {array} \right].
 \]  Now we compute  
 $(Q^+\otimes Q^+)R(Q\otimes Q)$ and obtain (again, with $y=x+1$):

\[ S:= \left[ \begin {array}{ccccccccc} 1&0&0&0&0&0&0&0&0
\\ \noalign{\medskip}0&y(1-x)&0&{x}^{2}&0&0&0&0&0
\\ \noalign{\medskip}0&0&- y ^{2}
 \left( x-1 \right)
&0&-x y  \left( x-1 \right) ^{2}&0&{x}^{4}&0&0\\ \noalign{\medskip}0&1&0&0&0&0&0&0&0
\\ \noalign{\medskip}0&0&y&0&-x&0&0&0&0\\ \noalign{\medskip}0&0&0&0&0
&y(1-x)&0&{x}^{2}&0\\ \noalign{\medskip}0&0&1&0&0&0&0&0&0
\\ \noalign{\medskip}0&0&0&0&0&1&0&0&0\\ \noalign{\medskip}0&0&0&0&0&0
&0&0&1\end {array} \right] 
,
\] which is in $\Matcha^3(2,2)$.  
Observe that, $Q\in\Hom((3,S),(4,R))$ is a monomorphism so that $[Q,3,S]$ is a subobject of $(4,R)$.  

Setting $P= \left[ \begin {array}{cccc} 1&0&0&0\\ \noalign{\medskip}0& \frac{1}{1-x}&{\frac {-x}{1-x}}&0\\ \noalign{\medskip}0&0&0&1
\end {array} \right]$ we find, similarly, that $[3,S,P]$ is a quotient object of $(4,R)$. Note also that $PQ=A\in\End((4,R))$. 
{
By the classification in \cite{HMR1} the matrix $S$ above is in the same variety as the well-known $R$-matrix for $U_q\mathfrak{so}_3$ (see e.g., \cite{Wenzl90CMP} for the explicit form we use). This is unsurprising as the adjoint representation of $\mathfrak{sl}_2$ is the standard representation of $\mathfrak{so}_3$.}
\medskip

The following rank $2$ matrix $B:= \left[ \begin {array}{cccc} 1&0&0&0\\ \noalign{\medskip}0&{\frac {x}{
x-1}}&{\frac {-x}{x-1}}&0\\ \noalign{\medskip}0& \frac {1}{x-1}&\frac {-1}{x-1}&0\\ \noalign{\medskip}0&0&0&0
\end {array} \right]$ in $\End((4,R))$ yields, by a similar process as above, a subobject  $[Q',2,T]$ and a quotient object $[2,T,P']$ of $(4,R)$ where $T= \left[ \begin {array}{cccc} 1&0&0&0\\ \noalign{\medskip}0&0&{x}^{2}&0
\\ \noalign{\medskip}0&1&0&0\\ \noalign{\medskip}0&0&0&-x\end {array}
 \right],$ and $B=P'Q'$.

  There are also three  $1$-dimensional subobjects and quotient object of $(4,R)$: two corresponding to (left/right) eigenvalue $1$ and one with (left/right) eigenvalue $-x$. 
\end{example}

\begin{example}
    Objects $(N,R)\in YB(\Mat^N)$ with $N>1$ and $R\in\Match^N(2,2)$ are never simple: they have subobjects and quotient objects of the form $(M,S)$ for every $M\leq N$. Indeed, choose any $M$ distinct standard basis vectors $\mathcal{B}=[\ket{j_1},\ldots,\ket{j_M}]$ and let $(M,S)\in YB(\Mat)$ be the charge conserving object corresponding to the restriction of $R$ to the {span of $\mathcal{B}\otimes \mathcal{B}$}.   Define $Q\in\Mat(M,N)$ to be the matrix of the vectors in $\mathcal{B}$ taken column-wise.  Then $ S=(Q^+\otimes Q^+)R (Q\otimes Q)$.  Note that $S\in \Match^M(2,2)$. The same construction works for quotient objects. It is also clear that any diagonal matrix $A$ with eigenvalues $0,1$ is in $\End(N,R)$, so that {by Theorem \ref{thm:endoissub/quo} } the construction of subobjects and quotient objects from $\End(N,R)$ produces all of these examples.  

    In fact, the above shows that any $(1,R)\in YB(\Mat^N)$ with $R\in\Match^N(2,2)$ is left and right decomposable for any $N>1$.  Note that the corresponding $(1,R)\in YB(\Match^N)$ lie in $CD_N$--they are maximally decomposable in this sense.  They are properly contained in $CD_N$--the first examples where equality fails is for $N=3$.
\end{example}

\begin{example}
    No $(N,R)\in YB(\Mat)$ with $R\in\PermMat^N(2,2)$ is simple.  This is immediate from the fact that for $v:=\sum_{i =1}^N\ket{i}$ we have $v\otimes v$ is a (right) eigenvector for such an $R$, with eigenvalue $1$.  Thus $(1,[1])$ corresponds to a subobject.  Similarly $w:=\sum_{i =1}^N\bra{i}$ renders $(1,[1])$ a quotient object. 
\end{example}

The following is a non-trivial example of a simple object in $YB(\Mat)$.
\begin{example}\label{ex:gauss2}
  The `Ising' unitary Yang-Baxter operator  $R=\frac{1}{\sqrt{2}}\left[ \begin {array}{cccc} 1&0&0&1\\ \noalign{\medskip}0&1&1&0
\\ \noalign{\medskip}0&-1&1&0\\ \noalign{\medskip}-1&0&0&1\end {array}
 \right]$ is simple, as it has no left or right eigenvectors of the form $v\otimes v$.  Consistent with the above, we find that $\End(2,R)$ consists of scalar multiples of  $\left[\begin{array}{cc} 1&0\\0&\pm 1\end{array}\right]$ hence $(2,R)$ has no rank $1$ endomorphisms.

\end{example}

Continuing with the philosophy that computing $\End(N,R)$ is essential for finding sub- and quotient objects, we have the following for group-type objects:
\begin{example}
  Let $(N,R)$ group type, i.e. there are $\{g_i\}_{i=1}^N\in GL_N$ such that with respect to the standard basis $[x_1,\ldots,x_N]$ we have $R(x_i\otimes x_j)=g_i(x_j)\otimes x_i$.  In \cite[Lemma 3.1]{KadarMartinRowellWang17} it is shown that the Yang-Baxter equation is equivalent to the matrix equations: $g^{j,k}_i g_ig_j = g^{j,k}_i g_kg_i$ for all $i,j,k$ where $g_i(x_j)=\sum_k g^{j,k}_ix_k$.  Let $h\in\Mat(N,N)$ and suppose that $h\in\End(N,R)$.  Defining  $h(x_j)=\sum_k h^{j,k}x_k$ we find that we must have $h^{i,k} g_kh = h^{i,k} hg_i$ for all $i,k$.  In particular this holds for $h=g_j$, for any $j$, so that $\Aut(N,R)$ contains the group $G=\langle g_1,\ldots,g_N\rangle$ generated by the $g_i$.  
    
    \end{example}

\subsubsection{Rigidity}
It is an interesting question to explore which objects in $YB(\Mat)$ are rigid, i.e., have left and right duals. Since $YB(\Mat)$ is braided {\eqref{ybmat is braided}} we do not need to distinguish between left/right duals. 
A dual to $(N,R)\in YB(\Mat)$ is an object $(M,S)$ together with two matrices \[coev_R=Q\in \Hom((1,\id),(NM,R\boxtimes S))\] and \[ev_R=P\in\Hom((MN,S\boxtimes R),(1,\id))\] such that 
\[(P\otimes I_N)(I_N\otimes Q)=I_N\quad\text{and}\quad (I_M\otimes P)(Q\otimes I_M)=I_M.\]  From the above we see that $Q\otimes Q$ must be a right eigenvalue of $R\boxtimes S$ with eigenvalue $1$ and $P\otimes P$ is a left eigenvalue of $S\boxtimes R$ of eigenvalue $1$.
Here, the interpretation is $I_N=I_R\in\End(N,R)$ and $I_M=I_S\in\End(M,S)$.  

We do not know if every object $(N,R)$ has a dual, and suspect it is false.  However, all set-theoretical Yang-Baxter operators are rigid, as the following shows:
\begin{lemma} Any $(N,R)\in YB(\Mat)$ with $R\in\PermMat^N(2,2)$ is self-dual. 
\end{lemma}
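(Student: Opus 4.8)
The plan is to show that $(N,R)$ is a dual of \emph{itself}, taking for $coev_R$ and $ev_R$ the standard (co)evaluation of the object $N$ inside $\Mat$. Write $R\ket{ij}=\ket{\mathcal R(i,j)}$, where $\mathcal R$ is the bijection of $\{1,\dots,N\}^{2}$ recording the permutation matrix $R$, and set
\[\delta:=\sum_{i=1}^{N}\ket{i}\otimes\ket{i}\in\Mat(1,N^{2}),\qquad \varepsilon:=\sum_{i=1}^{N}\bra{i}\otimes\bra{i}\in\Mat(N^{2},1),\]
so that the usual snake identities $(\varepsilon\otimes\id_N)(\id_N\otimes\delta)=\id_N=(\id_N\otimes\varepsilon)(\delta\otimes\id_N)$ hold in $\Mat$. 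I would take $coev_R=\delta$, $ev_R=\varepsilon$, and verify the three requirements in the definition of a dual: that $\delta$ is a morphism $(1,\id)\to(N^{2},R\boxtimes R)$ in $YB(\Mat)$, that $\varepsilon$ is a morphism $(N^{2},R\boxtimes R)\to(1,\id)$, and that the two zigzag identities hold.

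The first two points carry the content. Unwinding Definition~\ref{de:lashing} in $\Mat$, whose braiding is the flip $P_{N,N}$ (a permutation matrix equal to its own inverse), one gets $R\boxtimes R=(\id_N\otimes P_{N,N}\otimes\id_N)(R\otimes R)(\id_N\otimes P_{N,N}\otimes\id_N)$ acting on the four $\C^{N}$-factors of $(N\otimes N)^{\otimes2}$; chasing a basis vector through, $R\boxtimes R$ sends $\ket a\otimes\ket b\otimes\ket c\otimes\ket d$ to $\ket{a'}\otimes\ket{b'}\otimes\ket{c'}\otimes\ket{d'}$, where $(a',c')=\mathcal R(a,c)$ and $(b',d')=\mathcal R(b,d)$. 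In particular $R\boxtimes R$ is again a permutation matrix, being a product of permutation matrices. Now $\delta\otimes\delta=\sum_{i,j}\ket i\otimes\ket i\otimes\ket j\otimes\ket j$ is the $0$--$1$ indicator vector of the set $D=\{(a,b,c,d):a=b,\ c=d\}$, and the formula just derived sends $(i,i,j,j)$ to $(i',i',j',j')$ with $(i',j')=\mathcal R(i,j)$; since $\mathcal R$ is a bijection, this is a bijection of $D$ onto $D$. So the permutation matrix $R\boxtimes R$ stabilises $D$, and therefore fixes both the column vector $\mathbf 1_D=\delta\otimes\delta$ and --- its transpose also stabilising $D$ --- the row vector $\mathbf 1_D^{T}=\varepsilon\otimes\varepsilon$. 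By Definition~\ref{de:YBC} this is exactly $\delta\in\Hom_{YB}((1,\id),(N^{2},R\boxtimes R))$ and $\varepsilon\in\Hom_{YB}((N^{2},R\boxtimes R),(1,\id))$.

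For the zigzag identities, observe that $\id_N\in\End_{YB}(N,R)$ (indeed $\id_N\otimes\id_N=\id_{N^{2}}$ commutes with $R$), so $\id_N$ is the morphism ``$I_N$'' occurring in the definition. The monoidal product of $YB(\Mat)$ acts on morphisms by the ordinary Kronecker product --- the braiding corrections in the lashing product only enter at the level of objects, as the functoriality-of-$c$ computations in this section make clear --- and the objects appearing in the two composites match up by associativity of the lashing product; hence the identities $(ev_R\boxtimes I_N)(I_N\boxtimes coev_R)=I_N$ and $(I_N\boxtimes ev_R)(coev_R\boxtimes I_N)=I_N$, read as matrix equations, are precisely the two standard snake identities for $\delta$ and $\varepsilon$. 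This exhibits $(N,R)$ as self-dual.

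The one genuinely delicate step is the $R\boxtimes R$-invariance of $\delta\otimes\delta$: one might worry that the bijection $\mathcal R$ scrambles the ``doubly diagonal'' pattern of $\delta$. It does not, because $R\boxtimes R$ applies the \emph{same} copy of $\mathcal R$ to the factor-pair $(1,3)$ and to the factor-pair $(2,4)$, so it carries a doubly diagonal configuration $(i,i,j,j)$ to a doubly diagonal configuration; bijectivity of $\mathcal R$ then forces $D$ to be stabilised. Everything else is bookkeeping together with the classical self-duality of $N$ in $\Mat$.
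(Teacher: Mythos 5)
Your proof is correct and follows essentially the same route as the paper's: the same coevaluation $\sum_i\ket{ii}$ and evaluation $\sum_i\bra{ii}$, the same verification that $R\boxtimes R$ fixes $\delta\otimes\delta$ by conjugating through the middle flip and using that a permutation matrix permutes the diagonal configurations among themselves, and the same explicit snake-identity check (your appeal to the classical snake identities for $N$ in $\Mat$ is exactly the computation the paper writes out). The only cosmetic difference is that you track individual basis vectors through $R\boxtimes R$ and argue via stabilisation of the index set $D$, where the paper observes directly that $R\otimes R$ fixes $\sum_{i,j}\ket{ijij}$ and reduces the row-vector case to the column-vector case via $R^{-1}=R^{T}$.
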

\begin{proof} We will freely use the standard bra-ket notation, i.e. $\ket{ij}$ for standard column vectors and similarly for row vectors, which may be identified with operators. In this notation let $Q=\sum_i \ket{ii}$, and $P=\sum_i\bra{ii}$.  We must first check that $Q\otimes Q$ intertwines $R\boxtimes R$ and $\Id$, and vice versa for $P\otimes P$.  Since $R^{-1}=R^T$ and $P=Q^T$ it is sufficient to check one of these two.  

Recall that $R\boxtimes R:=(\id_N\otimes P_{N,N}\otimes \id_N)(R\otimes R)(\id_N\otimes P_{N,N}\otimes \id_N)$.  As $R$ is a permutation matrix, we see that $\{R\ket{ij}:1\leq i,j\leq N\}=\{\ket{ij}:1\leq i,j\leq N\}$.  In particular $R\otimes R$ leaves the vector $\sum_{i,j}\ket{ijij}$ invariant.
Observe that $Q\otimes Q=\sum_{i,j} \ket{iijj}$ and so $\id_N\otimes P_{N,N}\otimes \id_N Q\otimes Q=\sum_{i,j}\ket{ijij}$.  It follows that $(R\boxtimes R) (Q\otimes Q)=Q\otimes Q$.

Next we compute \[(P\otimes I_N)(I_N\otimes Q)\ket{k}=(\sum_j\bra{jj}\otimes I_N)\sum_i(\ket{k}\otimes\ket{ii}=\sum_{i,j}\delta_{j,k}\delta_{j,i}\ket{i}=\ket{k}\] so that $(P\otimes I_N)(I_N\otimes Q)=I_N$. Taking transposes we obtain $(I_M\otimes P)(Q\otimes I_N)=I_N$ as well.
    \end{proof}

\begin{example}
   The unitary solution from Example \ref{ex:gauss2} is also self-dual: here we may again use $Q=\ket{11}+\ket{22}$.
\end{example}
\begin{remark}
    We have not been able to find duals for the Yang-Baxter object in example \ref{ex:projection} or the Yang-Baxter objects in example \ref{ex:9x9unitaries}.  
\end{remark}

\section{Subcategories of $\MonFun(\Bcat,\Mat)$ and $\MoonFun(\Bcat,\Mat)$}\label{s:subcats of MonFun}

We can begin to consider the two main \sproblems\  described in Section \ref{S:intro}.  We first consider subcategories of $\MonFun(\Bcat,\Mat)$  that come from restricting the targets to a monoidal subcategory of $\Mat$, such as those in Figure \ref{fig:catinclusions}.  Nearly everything said in this section about $\MonFun(\Bcat,\Mat)$ can also be said about $\MoonFun(\Bcat,\Mat)$.  We will not belabour this point, but indicate where there are differences.

 Generally, for any collection of matrices $\catT$ in $\Mat$ one can define $\MonFun(\Bcat,\catT)$ to be the full subcategory of those functors $F\in\MonFun(\Bcat,\Mat)$ with $F(\sigma)\in\catT$.  Note that since $\catT$ is not assumed to have any particular structure (eg. closure under composition) we cannot generally require morphisms to be in $\catT$.   The objects {in $\MonFun(\Bcat,\Mat)$} can be identified with pairs $(N,R)\in YB(\Mat)$, and morphisms the same as those in $YB(\Mat)$.

\subsection{Monoidal Subcategory Targets}
In section \ref{ss:subcats} we described a number of monoidal subcategories of $\Mat$.  This was accomplished by restricting the object monoids, the morphisms and sometimes both.  Some of these are displayed in Figure \ref{fig:catinclusions}.  For such a monoidal subcategory $\X$  we obtain a category 
$\MonFun(\Bcat,\X)$, for which a classification may be attempted.   
As noted in \ref{pa:identifyYBMonFun}
we denote such categories by $YB(\X)$\footnote{Note that $YB(\X)$ is not typically a full subcategory of $YB(\Mat)$, since we require morphisms to be in $\X$.  We will relax this below.}.
  In particular we may study $YB(\PermMat)$ and $YB(\MonMat)$ and for each $N$ we may study the categories $YB(\Mat^N),YB(\Match^N)$ and $YB(\Matcha^N)$.

  The problem of classifying objects in $YB(\PermMat)$ goes back at least to \cite{drinfeld92}, and was substantially studied in \cite{ESS}. This continues to be a rich area of research, see e.g. \cite{Vendraminetal22} and references therein.

The classification of objects in  $YB(\MonMat)$ is related to those in $YB(\PermMat)$ due to the following result (well known to experts, see also \cite{Nemec}):
\begin{proposition}
    Suppose $(N,R)\in YB(\MonMat)$, and let $R^{\times}$ be the matrix obtained from $R$ by replacing all non-zero entries in $R$ with $1$.
    Then $(N,R^\times)\in YB(\PermMat)$. 
\end{proposition}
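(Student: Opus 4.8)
The plan is to show that the map $R \mapsto R^\times$ interacts well enough with the monoidal and compositional structure underlying the Yang-Baxter equation to transport a solution through monomial matrices to a solution through permutation matrices. First I would set up notation: write a monomial matrix $R \in \MonMat$ as $R = D \cdot P$ where $P = R^\times$ is the underlying permutation matrix (the one with a $1$ wherever $R$ has a nonzero entry) and $D$ is the diagonal matrix of nonzero entries of $R$ (indexed so that $D_{ww}$ is the value $R_{P(w),w}$, or equivalently $R = P D'$ for a possibly different diagonal $D'$; I would pick whichever convention makes the bookkeeping cleanest). The key elementary observations are: (i) a product of monomial matrices is monomial, and the underlying-permutation operation is \emph{multiplicative} on monomial matrices, i.e. $(MN)^\times = M^\times N^\times$; (ii) Kronecker products of monomial matrices are monomial, and $(M \otimes N)^\times = M^\times \otimes N^\times$; (iii) the identity is its own underlying permutation, $\Id^\times = \Id$. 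These are immediate from the fact that in each case the nonzero-support pattern of the result depends only on the nonzero-support patterns (equivalently, underlying permutations) of the factors, not on the actual scalar values.

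With these in hand the argument is a one-line diagram chase. By hypothesis $(N,R) \in YB(\MonMat)$, so $R$ is invertible, $R \in \Match^N(2,2)$ is monomial (hence $R \otimes \Id$ and $\Id \otimes R$ are monomial), and
\[
(R \otimes \id)(\id \otimes R)(R \otimes \id) = (\id \otimes R)(R \otimes \id)(\id \otimes R)
\]
as in \eqref{eq:YBE}. Applying the underlying-permutation operation to both sides and using multiplicativity (i) together with compatibility with Kronecker products (ii) and (iii), the left side becomes $(R^\times \otimes \id)(\id \otimes R^\times)(R^\times \otimes \id)$ and the right side becomes $(\id \otimes R^\times)(R^\times \otimes \id)(\id \otimes R^\times)$, so $R^\times$ satisfies the Yang-Baxter equation. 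Since $R$ is invertible monomial, $R^{-1}$ is again monomial and $(R^\times)(R^{-1})^\times = (R R^{-1})^\times = \Id^\times = \Id$, so $R^\times$ is an invertible permutation matrix. Hence $(N, R^\times)$ is a Yang-Baxter object with $R^\times \in \PermMat^N(2,2)$, i.e. $(N,R^\times) \in YB(\PermMat)$.

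The only point requiring a little care — and the place I would expect to spend the most words — is the multiplicativity claim (i), because it is genuinely false for general matrices and even for general products involving monomial matrices: it relies on the observation that when two monomial matrices are multiplied, \emph{no cancellation can occur}, since each row of the product receives contributions from exactly one row of the second factor (as the first factor has one nonzero entry per row). I would state and prove this as a short preliminary lemma: if $M, N$ are monomial with $M_{ab} \neq 0 \iff a = \pi(b)$ and similarly $N_{bc}\neq 0 \iff b = \tau(c)$, then $(MN)_{ac} = M_{a,\tau(c)}N_{\tau(c),c}$, a single nonzero term, so $MN$ is monomial with underlying permutation $\pi\tau = M^\times N^\times$. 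Compatibility with Kronecker products follows the same way from the explicit formula $(M\otimes N)_{(a a'),(b b')} = M_{ab}N_{a'b'}$. Everything else is formal, so the proposition follows.
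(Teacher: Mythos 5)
Your proof is correct and follows essentially the same route as the paper's: both arguments rest on the observation that no cancellation can occur in products (and Kronecker products) of monomial matrices, so the support pattern of each side of the Yang--Baxter equation depends only on the underlying permutations, and setting all nonzero entries to $1$ turns the remaining scalar constraints into trivial identities. The paper states this in two sentences; your version just makes the multiplicativity of $M\mapsto M^\times$ explicit as a lemma, which is a reasonable expansion of the same idea.
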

\begin{proof}
For a monomial matrix $R$ with non-zero entries $q_{ij}$, both sides of the YBE are also monomial so that each non-zero constraint is of the form $q_{ij}q_{mn}q_{xy}=q_{ab}q_{cd}q_{ef}$.    In particular if every non-zero entry of $R$ is $1$, the YBE is trivially satisfied.
\end{proof}
It follows that any monomial Yang-Baxter operator is of the form $DP$ where $P$ is a permutation Yang-Baxter operator. and $D$ is a diagonal matrix.

Objects in $\MonFun(\Bcat,\UMat)$ are of  special interest for several reasons including their relevance in {quantum} physics and quantum computation.  There is a conjectural characterization (see \cite[Conjecture 2.7]{RowellWang12} and \cite[Conjecture 1.1]{GalindoRowell}): if $(N,R)\in YB(\UMat)$  then the braid group images $\rho_n^R(B_n)$ are \emph{virtually abelian}, that is, have a normal abelian subgroup of finite index.  This has been verified for many classes, see e.g. \cite{GalindoRowell}.

The \ppmm{solution to the} problem of classifying objects in $YB(\Match^N)$ of the form $(1,R)$ was a main inspiration for our \ppmm{the present} paper.  They are ubiquitous: the $R$-matrices associated with the `standard' $N$-dimensional representation of $U_q\mathfrak{sl}_N$ and $U_q\mathfrak{gl}(k,N-k)$ are of this type.  

The category $YB(\Matcha^N)$ is one natural generalisation of  $YB(\Match^N)$--they coincide for $N=2$, and the objects of the form $(1,R)\in YB(\Matcha^3)$ were classified in \cite{HMR1}, see below. For an example with $N=4$ see \cite{garoufalidis2025}. {For another example, we note that the $R$-matrix associated with the standard representation of $U_q\mathfrak{so}_3$ is in $YB(\Matcha^3)$:} \[\left[ \begin {array}{ccccccccc} q&0&0&0&0&0&0&0&0
\\ \noalign{\medskip}0&q-{q}^{-1}&0&1&0&0&0&0&0\\ \noalign{\medskip}0&0
& \left( q-{q}^{-1} \right)  \left( 1-{q}^{-1} \right) &0&-{ \left( q-
{q}^{-1} \right) {\frac {1}{\sqrt {q}}}}&0&{q}^{-1}&0&0
\\ \noalign{\medskip}0&1&0&0&0&0&0&0&0\\ \noalign{\medskip}0&0&-{
 \left( q-{q}^{-1} \right) {\frac {1}{\sqrt {q}}}}&0&1&0&0&0&0
\\ \noalign{\medskip}0&0&0&0&0&q-{q}^{-1}&0&1&0\\ \noalign{\medskip}0&0
&{q}^{-1}&0&0&0&0&0&0\\ \noalign{\medskip}0&0&0&0&0&1&0&0&0
\\ \noalign{\medskip}0&0&0&0&0&0&0&0&q\end {array} \right] \] {which has the same pattern of non-zero entries as the matrix $S$ in example \ref{ex:projection}.}

\subsection{Monoidal Subcategories of $YB(\Mat)$}

Given a collection of objects $\cO\subset \ob(YB(\Mat))$ we may define the monoidal subcategory $\langle \cO\rangle\subset YB(\Mat)$ as the full \emph{monoidal} subcategory generated by $\cO$.  For example, consider a subset $\cO_\catT$ {$\subset \ob(YB(\Mat))$ consisting of the} $(N,R)$ such that $R\in\catT$ of matrices, and look at the subcategory of $YB(\Mat)$ they generate.   If $\cO_\catT$ is closed under the lashing product we will denote this full monoidal subcategory of $YB(\Mat)$ by $YB(\catT)$.  We describe a few examples of these monoidal subcategories.

\subsubsection{Group-type Matrices}
Denote by $\GTMat\subset \coprod_{N\geq 1}\Mat^N(2,2)$ the collection of group-type matrices
 as defined in section \ref{ss:subcats}. The following are particular examples of group-type Yang-Baxter objects.
\begin{example}
    Define $R\ket{ij}=g\ket{j}\otimes \ket{i}$ for a fixed invertible matrix $g\in GL(N)$. One easily verifies: $R_1R_2R_1\ket{ijk}=g^2\ket{k}\otimes g\ket{j}\otimes\ket{i}=R_2R_1R_2\ket{ijk}$. This provides an infinite family of group-type $(N,R)\in YB(\Mat)$ in all dimensions.
\end{example}

\begin{example}
In dimension 3 we may take $g_2=g_3=\Id_3$ and \[g_1= \left[ \begin {array}{ccc} 1&0&0\\ \noalign{\medskip}0&{\frac{3}{5}}&
{\frac{4}{5}}\\ \noalign{\medskip}0&-{\frac{4}{5}}&{\frac{3}{5}}
\end {array} \right].\]  One easily verifies that the corresponding group-type matrix yields a Yang-Baxter object.
\end{example}

If $R,S$ are group-type matrices then their lashing product is too, whether they correspond to Yang-Baxter objects or not, as the following shows.
\begin{lemma}
Suppose $R\in\Mat^N(2,2),S\in\Mat^M(2,2)$ are group-type matrices.  Then $R\boxtimes S$ is also of group type.
\end{lemma}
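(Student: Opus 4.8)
The plan is to evaluate $R\boxtimes S$ directly on standard basis vectors and recognise the group-type shape. Write out the defining data: since $R\in\Mat^N(2,2)$ is of group type there are $g_1,\dots,g_N\in GL(\C^N)$ with $R\ket{ij}=g_i\ket{j}\otimes\ket{i}$, and since $S\in\Mat^M(2,2)$ is of group type there are $h_1,\dots,h_M\in GL(\C^M)$ with $S\ket{ab}=h_a\ket{b}\otimes\ket{a}$. Recall from \ref{ybmat is monoidal} that the braiding on $\Mat$ producing the lashing product is the standard coordinate flip; so, concretely, the two morphisms conjugating $R\otimes S$ in the definition of $\boxtimes$ (\ref{de:lashing}) merely permute the four tensor slots of $(\C^N\otimes\C^M)^{\otimes 2}=\C^N\otimes\C^M\otimes\C^N\otimes\C^M$, bringing the two $\C^N$-slots together and the two $\C^M$-slots together, and then undoing this afterwards.

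First I would push a basis vector $\ket{i}\otimes\ket{a}\otimes\ket{j}\otimes\ket{b}$ through this. The first flip interchanges the middle two slots, giving $\ket{i}\otimes\ket{j}\otimes\ket{a}\otimes\ket{b}$; then $R\otimes S$ acts as $R$ on the first pair and $S$ on the second pair, yielding $(g_i\ket{j}\otimes\ket{i})\otimes(h_a\ket{b}\otimes\ket{a})$; and the second flip interchanges the middle two slots again, producing $(g_i\ket{j})\otimes(h_a\ket{b})\otimes\ket{i}\otimes\ket{a}$. Writing $\ket{(i,a)}:=\ket{i}\otimes\ket{a}$ for the standard basis of $\C^N\otimes\C^M=\C^{NM}$ (with the $Ab$-Kronecker ordering of \ref{de:Mat}), this computation reads
\[(R\boxtimes S)\bigl(\ket{(i,a)}\otimes\ket{(j,b)}\bigr)=\bigl(g_i\otimes h_a\bigr)\,\ket{(j,b)}\ \otimes\ \ket{(i,a)},\]
where $g_i\otimes h_a$ denotes the linear map $\C^N\otimes\C^M\to\C^N\otimes\C^M$, $v\otimes w\mapsto g_iv\otimes h_aw$, i.e. the Kronecker product of $g_i$ and $h_a$. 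Since $g_i$ and $h_a$ are invertible, so is $g_i\otimes h_a$, with inverse $g_i^{-1}\otimes h_a^{-1}$; hence $R\boxtimes S$ is of group type, with defining family $\{\,g_i\otimes h_a : (i,a)\in\ul{N}\times\ul{M}\,\}$ under the identification $\ul{N}\times\ul{M}=\ul{NM}$.

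The only part requiring care is bookkeeping: matching the combined index $(i,a)$ to the standard ordered basis of $\C^{NM}$ consistently with the $Ab$ convention fixed in \ref{de:Mat}, and checking that the two conjugating flips act on the intended pair of slots and in the intended directions (in a symmetric monoidal category these flips are mutually inverse, so the conjugation is a genuine change of tensor-factor order). None of this is substantive. I expect no real obstacle here; in particular note that the argument uses only the group-type form of $R$ and $S$, never the Yang--Baxter equation, consistent with the assertion that $R\boxtimes S$ is of group type whether or not $R$ and $S$ define Yang--Baxter objects.
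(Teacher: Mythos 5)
Your proposal is correct and is essentially the paper's own argument: both evaluate $R\boxtimes S$ directly on product basis vectors, track the two flips and the action of $R\otimes S$, and read off that the result has the group-type form with defining family $\{g_i\otimes h_a\}$. You simply make explicit the intermediate slot-permutation steps that the paper's one-line computation leaves implicit.
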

\begin{proof}
 Suppose $R(x_i\otimes x_j)=g_i(x_j)\otimes x_i$ and $S(y_k\otimes y_\ell)=h_k(y_\ell)\otimes y_k$ where  $g_i\in GL(V)$ and $h_k\in GL(W)$ and $V,W$ have ordered bases $\{x_i\}$ and $\{y_k\}$, respectively. Then, with respect to the product basis $\{x_i\otimes y_k\}$ 

$R\boxtimes S(x_i\otimes y_k\otimes x_j\otimes y_\ell)=(g_i\otimes h_k)(x_j\otimes y_\ell)\otimes (x_i\otimes y_k)$ which is clearly of group type.  
\end{proof}

  The objects in $YB(\GTMat)$ correspond to  finite dimensional Yetter-Drinfeld modules \cite{AS} over the group generated by the $g_i$, and thus are in some sense classified.  Here the lashing product is precisely the Yang-Baxter operator corresponding to the direct product of the groups.  It is worth pointing out that for any group-type Yang-Baxter operator $R$, the image $\rho_R(B_n)$ is virtually abelian \cite{GalindoRowell}.

\subsubsection{Involutive Matrices}\label{sss:invol}

Involutive Yang-Baxter operators are also clearly closed under the lashing product: if $R,S$ are involutive then so is \[(\id_N\otimes P_{N,M}\otimes \id_M)(R\otimes S)(\id_N\otimes P_{M,N}\otimes \id_M).\] Thus we may define $YB(\Invol)$ to be the full subcategory of $YB(\Mat)$ generated by involutive matrices, and we see that these are of the form $(N,R)\in YB(\Mat)$ where $R$ is involutive.  Notice that $YB(\Invol)\cong \MonFun(\Bcat,\Invol)$ is equivalent to $\MonFun(\Sym,\Mat)$ where $\Sym$ is the symmetric group category obtained by imposing $\sigma^2=\id$ in $\Bcat$.  In particular the images of the braid groups under the corresponding representations are finite groups.  {In this setting it is particularly natural to consider $\MoonFun(\Bcat,\Invol)\cong\MoonFun(\Sym,\Mat)$, as is done in \cite{LechnerPennigWood}, since the representation theory is particularly well-behaved. }

\medskip

Here are some potential new directions suggested by the above discussion.
\begin{question}
\begin{enumerate}
    \item The subcategory of $YB(\Mat)$ corresponding to objects with duals is closed under the lashing product.  Classifying this subcategory would include all of $YB(\PermMat)$, but what other objects are in this category?  
    \item What is the subcategory generated by the unitary object $R$ of example \ref{ex:gauss2} (which we have seen is self-dual)?  
\end{enumerate}
For the latter, notice that the lashing product $R\boxtimes S$ is similar (i.e., conjugate) to the Kronecker product $R\otimes S$, so the spectrum of $R\boxtimes S$ is obtained as products of those of $R$ and $S$. In particular if $R$ has eigenvalues that are $n$th roots of unity, any subobject of a lashing power of $R$ also has eigenvalues in the set of $n$th roots of unity.
\end{question}

\mdef
As noted in the Introduction, the motivating examples in this paper are objects $F$ in $\MonFun(\Bcat,\Match^N)$,
which have all been completely classified, subject only to the condition $F(1)=1$.
It is instructive for a number of reasons  to explain what happens when 
this condition is lifted.
The most obvious new case here is $F(1)=2$, and this example is indeed illuminating, as we
show next.  

\newcommand{\x}{\ast}
\setcounter{MaxMatrixCols}{20}

Here is the `shape' of an $R$-matrix targeting $\Match^2$ with $F(1)=2$:
\beq  \label{eq:F12shape}
\matt{cccc|cccc|cccc|cccc} 
\x &   &   &   &   &&&&&&&&   \\
   &\x &\x &   & \x &   &&& \x &&& \\
   &\x &\x &   & \x &   &&& \x &&& \\
   &   &   &\x &    &      \x & \x & & & \x&\x &  & \x&&& \\ \hline 
   &\x &\x &   & \x &   &&& \x &&&  \\
   &   &   &\x &    &      \x & \x & & & \x&\x &  & \x&&& \\ 
   &   &   &\x &    &      \x & \x & & & \x&\x &  & \x&&& \\ 
   &&&&&&& \x  &&&& \x && \x & \x\\ \hline
   &\x &\x &    &\x  & &&& \x &&& \\ 
   &   &   &\x &    &      \x & \x & & & \x&\x &  & \x&&& \\ 
   &   &   &\x &    &      \x & \x & & & \x&\x &  & \x&&& \\ 
   &&&&&&& \x  &&&& \x && \x & \x\\ \hline
   &   &   &\x &    &      \x & \x & & & \x&\x &  & \x&&& \\    
   &&&&&&& \x  &&&& \x && \x & \x\\ 
   &&&&&&& \x  &&&& \x && \x & \x\\ 
   &&&&&&&&&&&&&&&\x 
\ttam 
\eq 
The natural comparison here is with $F(1)=1$ at rank $N=4$, which is completely understood.
In this case, for comparison, the shape is

Note that in this case $F(1)=1$ lies `inside' $F(1)=2$ but is substantially smaller.

\ignore{{
We have described a number of subcategories of $YB(\Mat^N)$ above, such as $YB(\Match^N)$.  However, classifying all such objects remains too wild a problem.  Instead, we are most interested in the objects of the form $(1,R)\in YB(\Match^N)$ or $YB(\Matcha^N)$.  }}

\mdef \label{def: YB^a} For $\X$ a subcategory of $\Mat^N$ we denote by $YB^a(\X)$ the full subcategory of $YB(\X)$ whose objects are of the form $(a,R)\in YB(\X)$.  Similarly we define $\MoonFun^a(\Bcat,\X)$ to be the full subcategory of $\MoonFun(\Bcat,\X)$ with objects $F$ such that $F(1)=a$ and $F(\sigma)\in\X(a,a)\subset\Mat^N(a,a)$.  Crucially, both $YB^a(\X)$ and $\MoonFun^a(\Bcat,\X)$ are groupoids: any morphism between $(a,R)$ and $(a,S)$ must be an isomorphism.

Notice that $YB^a(\Mat^N)$ and $\YBone(\Mat^{N^a})$, have the same objects since $(a,R)\in YB(\Mat^N)$ is the same as $(1,R)\in\YBone(\Mat^{N^a})$.  A morphism in $YB^a(\Mat^N)$ from $(a,R)$ to $(a,S)$ is a $Q\in\Mat^N(a,a)$ such that $(Q\otimes Q) R=S(Q\otimes Q)$, so $Q$ is an $N^a\times N^a$ matrix.  Similarly, a morphism in $\YBone(\Mat^{N^a})$ from $(1,R)$ to $(1,S)$ is a $Q\in\Mat^{N^a}(1,1)$ such that $(Q\otimes Q) R=S(Q\otimes Q)$, so $Q$ is again an $N^a\times N^a$ matrix.  Thus 
$\YBone(\Mat^{N^a})\cong YB^a(\Mat^N)$.

The example that inspired this categorical approach is the case of  $(1,R)\in \YBone(\Match^N)$, i.e., charge conserving $N^2\times N^2$ Yang-Baxter matrices.
Here $\YBone(\Match^N)$ is the full subcategory of $YB(\Match^N)$ with objects of the form $(1,R)$. As we are taking the full subcategory of $YB(\Match^N)$ (as opposed to the full subcategory of $YB(\Mat^N)$) the morphisms between $(1,R)$ and $(1,S)$ are of the form $Q\in\Match^N(1,1)$ such that $R(Q\otimes Q)=(Q\otimes Q)S$.  Notice that $\YBone(\Match^N)$ is not \emph{monoidal} even though $YB(\Match^N)$ is monoidal: indeed, $\Match^N$ is closed under the $\boxtimes$ lashing product but $(1,R)\boxtimes(1,S)\in YB^2(\Match^N)$. 

\section{Equivalences}\label{s:equivandauto}

Let $\X$ be either a subcategory of $\Mat$ or a subset of matrices such as $\Invol$ as described in section \ref{ss:subcats}.
As we mentioned in the introduction, useful classifications of 
braid representations with target $\X$, or equivalently objects in $YB(\X)$
as in \ref{de:YBC}, 
will be up to some form of equivalence.   
Some, but not all, of these can be described in terms of autoequivalences of $YB(\X)$, i.e., endofunctors $F\in\End(YB(\X))$ such that there exists a $G\in\End(YB(\X))$ with $FG$ and $GF$ both naturally isomorphic to the identity functor.  
In this section we explore several notions of equivalence, illustrated throughout by examples.  We will see that some forms of equivalence are too fine 
{to yield a manageable or even feasible classification}
while others are too coarse {to meaningfully distinguish the cases}--the goal is to find  
{a}
`Goldilocks level' of equivalence leading to a feasible and meaningful classification. 

\medskip 

Recall {from the Introduction} that at heart 
we are studying 
(aiming to classify) 
the objects in $\MoonFun(\Bcat,\catC)$ for various subcategories $\catC$ of $\Mat$. 
Thus one natural 
part of any 
notion of equivalence arrives through the natural isomorphisms - the isomorphisms in this category. 
In terms of the  
diagram here:
\[
\xymatrix@+24pt{ 
\Bcat   
\ruppertwocell^{F}_{}{\alpha}  \ar[r]_{G}  
         &   \catC 
}
\]
which is a picture of objects and morphisms in $\MoonFun(\Bcat,\catC)$,
we are saying $F,G$ are equivalent if 
(not necessarily only if)
there is a natural transformation $\alpha$ that  is a natural isomorphism. 
In the next picture, if we take it that every natural transformation shown is a natural isomorphism, then the `connected components' yield equivalence classes:
\[
%
\xymatrix@+24pt{ 
\Bcat   
\ar@/^4pc/[r]_{B}^{}="b"  
\ar@/^5.6pc/[r]^{A}_{}="a"  
\ruppertwocell^{F}_{}{\alpha}  
\ar[r]|-{G}  
\rlowertwocell^{}_{F'}{\beta} 
         &   \catC 
\ar@{=>}^{\gamma} "a";"b"
}
\ignore{
\hspace{3.1cm}  
\xymatrix@R+4pt{ 
\Bcat   
\ruppertwocell^{F}_{}{}  \ar[r]_{}  \rlowertwocell^{}_{F'}{\beta} 
\ar @/^3.5pc/ [rr]
         &   \catC 
\rtwocell^{G}_{Id}{\alpha}  &    \catC   \\
}}
\]

The natural isomorphisms themselves are amenable to partial classification,
according to how they can be constructed. 
For one thing we can separate into those that lie in $\MonFun(\Bcat,\catC)$ and those that do not
(for example recall that there are generally more isomorphisms in  $\MoonFun(\Bcat,\catC)$). 
But we can go further. 
The following schematics use some of the functors introduced in \S\ref{ss:Bcat}: 
\beq
\xymatrix@C+24pt@R+24pt{ 
\Bcat   \ar[d]_{\Fox} 
\ar @/^1pc/ [r]^{F}_{}      
\ar @/_1pc/ [r]^{}_{\Jarmo(F)}      
         &   \Mat^N 
\\
       \Bcatmop  \ar[r]_{F}  &   (\Mat^N)^{\mop} \ar[u]_{\Farr} 
}
\eq
- observe here that a monoidal functor $F:C \rightarrow D$ yields identically a functor 
$F:C^{\mop} \rightarrow D^{\mop}$;
thus $F'$ is derived from $F$ such that the bottom square commutes.

Next we have:
\medskip \medskip 
\beq 
\xymatrix@C+24pt@R+24pt{ 
\Bcat \ar[r]^{\Fio}   \ar[dr]_{\Fox}   \ar[d]_{\Feta} & 
\Bcat   \ar[d]_{\Fox} 
\ar @/^1pc/ [r]^{F}_{}   
\ar @/^3.15pc/ [rr]
         &   \Mat^N 
\rtwocell^{G}_{Id}{\alpha}  &    \Mat^N 
\\
\Bcat^\circ  
       &  \Bcatmop  \ar[r]_{F}  &   (\Mat^N)^{\mop} \ar[u]_{\Farr} \ar[ur]_{\Farr} 
}
\eq

Broadly 
we have the organisational scheme 
for constructing a key subclass of such isomorphisms 
indicated by the following diagrams, depicting composition of $F$ with isomorphism {\em functors} (i.e. not yet, and not necessarily leading to, 
natural transformations): 
\[
\xymatrix{
\Bcat   
   \ar[dd]_{\Fio} \ar[drr]^{F'} &  \\ && \catC   \ar@(dr,ur)[]_{\alpha}
\\
\Bcat  \ar[urr]_{F}
}
\;\;\hspace{1cm}      
\xymatrix{
\Bcat   \ar@(dl,ul)[]^{\beta} \ar[r]^{F}   & \catC   \ar@(dr,ur)[]_{\alpha}
}
\;\;\hspace{1cm}        
\xymatrix{
             &           &  \catC \ar[dd]^{Id}
\\
\Bcat \ar@(dl,ul)[]^{\beta}   \ar[rru]^F  \ar[rrd]_{F'} & &  
\\
             &            &   \catC   \ar@(dr,ur)[]_{\alpha}
}
\]
In the central schematic we suppose that $\alpha, \; \beta$ are functors that are isomorphism functors - 
objects in the indicated functor categories. 
Then the corresponding equivalences 
are of the form $F \sim F\circ \beta$ and $F \sim \alpha\circ F$. 
We call a symmetry `global' 
if the transformation can be applied to any $F$ to obtain another
(once again it is instructive to compare and contrast with ordinary representation theory here - if we fix a vector space $V$ then every automorphism of this space can be applied to any $A$-module structure on $V$ to make an isomorphic $A$-module; but if we do not fix target $V$ then the algorithm for prescribing isomorphisms could in principle become more meta - although in this case it does not generalise further than to vector space isomorphisms). 
In this sense the constructions here indicate global symmetries. 
Unpacking an example, 
the diagram on the left says categorically that if we have a braid representation then we have another one by taking $F'(\sigma) = F(\sigma)^{-1}$ (we don't need any $\alpha$ here). 
Isomorphism of $\Fio$ follows from the 
symmetry of (\ref{eq:YBE0}) (or (\ref{eq:YBE}), or B1 and B2 in \S\ref{S:intro}) under reversal. 
(Observe that there is certainly not a natural isomorphism $\eta: F \Rightarrow F'$ in general.)
For another (meta) example $\beta$ could be the upside-down functor  
$\Feta$ for $\Bcat$: $\beta_0(n)=n$ on objects, and $\beta(g)$ for $g$ a morphism, i.e. a braid, is the upside-down  braid.
This takes $\sigma$ identically to $\sigma$, but passes to $\Bcat^\circ$; 
but then we can compose with transpose on the $\catC=\Mat^N$ side.
This tells us that any $F$ as above 
yields an $F'$ given by 
\beq \label{eq:transpose-equiv} 
F'(\sigma) = F(\sigma)^{tr}
\eq 
(transpose). 

Examples of all of these equivalences in action are given in \S\ref{s:classifications}. 

\medskip

 In Section~\ref{de:local-equiv} we discuss {specific} ways in which pairs of objects can be regarded as equivalent.
In Section~\ref{ss: symmetries} we consider {specific} global symmetries of our categories. 
In Section~\ref{ss:universal} we cast the problem in terms of `universality'.
In general fixing a source category $A$ (in our case this is $\Bcat$ - a given collection of groups;
but it could be some other collection of groups or algebras - indeed replacing $\Bcat$ with the linear version does not change our problem greatly, since group representations immediately give group algebra representations); and target $\X$, we are studying the objects of $\MonFun(A,\X)$.
A target $\X\subset\Mat$ is universal if it contains (in a suitable sense) every irreducible representation of every group/algebra in the source $A$ that appears as a factor in functors to $\Mat$. 
In \S\ref{ss:universal} subsection we will illustrate with some key examples.

\subsection{Equivalences between pairs of solutions}\label{de:local-equiv} 
Once one has a parametrisation of some partial classification it is natural to consider the solutions up to categorical isomorphisms.  There are choices to be made here as well.  If we are only interested in the representation-theoretic aspects 
(such as operator spectra)
as in many applications to physics, natural isomorphisms are a standard choice, i.e. equivalence in $\MoonFun(\Bcat,\Mat^N)$.  Another common choice is to consider monoidal natural isomorphisms, which preserve the local tensor product decomposition, as might be of interest in quantum information. 

\subsubsection{Local isomorphisms}  
For $\X\subset\Mat^N$ a subcategory, an isomorphism $\varphi_Q:(a,S)\cong (a,R)$ in $YB(\X)$ consists of an invertible $Q\in\X(a,a)$ such that $(Q\otimes Q)R=S(Q\otimes Q)$.   This is often called a \emph{local equivalence} between $R$ and $S$.  

Most classification schemes incorporate local equivalence, for example in the set-theoretic setting \cite{ESS}, and for low ranks \cite{Hietarinta1992,HMR1}.

\subsubsection{$p$-equivalence}   \label{ss:p}
From a representation theory point of view, 
restricting  the operative notion of equivalence to 
local equivalence may be regarded as  
unnecessarily strong.
In this section we explain this, and introduce a framework for alternatives.

Let $(N,R)\in YB(\Mat)$ be a Yang-Baxter object. 
Recall that this just means that we have a braid representation given by $F(\sigma)=R$. 
Recall in particular from section \ref{S:intro}
that this gives a matrix representation 
of $B_n$ for each $n$, which we can write as 
$\rho^R_n : B_n \rightarrow \Mat^N(n ,n)$,
given by 
\beq  \label{eq:matrep}
\rho^R_n (\sigma_i) = 
\Id_N \otimes \Id_N \otimes  \cdots\otimes R \otimes \Id_N \otimes \cdots \otimes \Id_N
\eq 
with the $R$ in the $i$-th position.

Now
let $(N,R),(M,S)\in YB(\Mat)$.
We say that these are 
{\em $p$-equivalent} 
if there is 
an invertible matrix $T_n$ for each $n\leq p$ such that 
\beq \label{eq:Tn}
T_{n}\rho_n^S
= \rho_n^R  \; T_{n}  .
\eq 

Observe that 1-equivalence of $(N,R)$ and $(M,S)$ only requires that $N=M$, while $2$-equivalence is the condition that $R$ and $S$ be similar matrices.
Of course $p$-equivalence implies $p-1$-equivalence. 
Finally, 
$\infty$-equivalence is precisely the same as a (not necessarily monoidal) natural isomorphism.
That is, we have the following.

\mdef  \label{pr:infMoon} 
Two braid representations (given by $R$ and $S$ say 
- so $R\in\Mat(N^2,N^2)$, i.e. $(N,R) \in \YB(\Mat)$, for some $N$), 
are $\infty$-equivalent 
iff they are isomorphic in $\MoonFun(\Bcat,\Mat)$
iff they are isomorphic in $\MoonFun(\Bcat,\Mat^N)$.  

\mdef Note that 2-equivalence does not imply $\infty$-equivalence (except, trivially, at rank-1).   
For example the `Ising' or Gaussian solution $R$ of Example \ref{ex:gauss2} (see \ref{ss:jarmo}) is similar to case $a$ of the $\Match^2$ classification \cite{MR1X} with parameters $\alpha=\zeta_8$ and $\beta=\zeta_8^{-1}$ where $\zeta_8:=\frac{1}{\sqrt{2}}+i\frac{1}{\sqrt{2}}$ explicitly:
\[R':=\left[ \begin {array}{cccc} \zeta_{{8}}&0&0&0\\ \noalign{\medskip}0&
\zeta_{{8}}+{\zeta_{{8}}}^{-1}&-{\zeta_{{8}}}^{-1}&0
\\ \noalign{\medskip}0&\zeta_{{8}}&0&0\\ \noalign{\medskip}0&0&0&{
\zeta_{{8}}}^{-1}\end {array} \right]. \]
However, these are not $3$-equivalent: for example, the traces of the images of $\sigma_1\sigma_2^{-1}$ under these two representations do not match, giving $6$ and $4$ respectively.  In fact $\rho_4^{R'}(B_4)$ is infinite, whilst $\rho_n^{R}(B_n)$ is finite for all $n$ \cite{FRW}.

\mdef {The fact that $2$-equivalence does not imply $\infty$-equivalence} can also be deduced for example from \cite{LechnerPennigWood}.  They show that there are exactly $20$ $\infty$-equivalence classes of unitary involutive $16\times 16$ Yang-Baxter matrices, corresponding to ordered pairs of Young diagrams with a total of 4 boxes between them.  Since  involutive $16\times 16$ matrices are diagonalisable, there are only $17$ distinct Jordan forms (given, say, by the dimension of the $-1$ eigenspace $0\leq E(-1)\leq 16$, and distinguished by the trace)
  there can be at most $17$ $2$-equivalence classes of involutive Yang-Baxter matrices.  
  {On the other hand,
  we do not know, for example, if $3$-equivalence implies $\infty$-equivalence.}
{It is known that $3$-equivalence between $R$ and $S$ is equivalent to the existence of a Drinfeld twist $(F,\Phi,\Psi)$ on $R$ such that $S=FRF^{-1}$, see e.g., \cite[Prop. 2.9]{Ferri2025}.  The question of when a Drinfeld twist can be extended to yield an $\infty$-equivalence is explored {for example} in \cite{KulishMudrov}, which suggests that one should not expect $3$-equivalence to imply $\infty$-equivalence.}
  
\mdef\label{2eigs} {In the case that $R$ and $S$ have exactly 2 eigenvalues $a$ and $b$ with $-a/b$ not a non-trivial root of unity, a criterion for $\infty$-equivalence could be derived from \cite{Davydov2000} and \cite[Chapter 5 Section 6]{Davydov1998}, in terms of the multiplicities of the $1$-dimensional $B_n$-subrepresentations associated with the eigenvalue $a$, for all $n$.  This would be an infinite number of conditions to check, but would be a valuable reduction nonetheless. }

\medskip

While local equivalence implies $\infty$-equivalence, the converse is false.  The following is an example: 
\begin{example}  \label{ex:9x9unitaries}
Set $\aaa =  \frac{\sqrt{3}+i}{2}$, a primitive 12-th root of unity.
Let 
$x=-\frac{1}{3} \left( 1+ \aaa^2 \right)$, $y=x+1$ and  
$ z =  -(x+y)$.
The following are in $YB(\UMat)$ (constructed as in \cite{Peking}, cf. \cite{RowellWang12})):
\[
R=\left[ \begin {array}{ccc|ccc|ccc} x&0&0&0&y&0&0&0&y
\\ \noalign{\medskip}0&x&0&0&0&x&z&0&0\\ \noalign{\medskip}0&0&x&z&0&0
&0&x&0\\ 
\hline 
\noalign{\medskip}0&0&x&x&0&0&0&z&0\\ \noalign{\medskip}y&0&0
&0&x&0&0&0&y\\ \noalign{\medskip}0&z&0&0&0&x&x&0&0
\\ 
\hline 
\noalign{\medskip}0&x&0&0&0&z&x&0&0\\ \noalign{\medskip}0&0&z&x&0&0
&0&x&0\\ \noalign{\medskip}y&0&0&0&y&0&0&0&x\end {array} \right],
\quad S=\left[ \begin {array}{ccc|ccc|ccc} x&0&0&y&0&0&y&0&0
\\ \noalign{\medskip}0&x&0&0&x&0&0&z&0\\ \noalign{\medskip}0&0&x&0&0&z
&0&0&x\\ 
\hline 
\noalign{\medskip}y&0&0&x&0&0&y&0&0\\ \noalign{\medskip}0&z&0
&0&x&0&0&x&0\\ \noalign{\medskip}0&0&x&0&0&x&0&0&z
\\ 
\hline 
\noalign{\medskip}y&0&0&y&0&0&x&0&0\\ \noalign{\medskip}0&x&0&0&z&0
&0&x&0\\ \noalign{\medskip}0&0&z&0&0&x&0&0&x\end {array} \right]
\]

One can easily verify computationally that these are not locally equivalent.  On the other hand,  they are $\infty$-equivalent, i.e., yield equivalent representations of $B_n$ for all $n$.  This is accomplished by a computation akin to that of \cite[Theorem 5.1]{RowellWang12}, passing through representations of the Temperley-Lieb algebra at $6th$ roots of unity.

\end{example}

\mdef The following is \cite[Lemma 3.1]{DS} (statement and proof adapted to our notation) which, as we will see, yields $\infty$-equivalence.

\begin{lemma}\label{le:DSequiv}
    Let $(N,R')\in YB(\Mat)$ and $Q\in \Aut(N,R')$ (as defined in (\ref{de:Aut}).
    Define \[R:=(Q\otimes I_N) R'(Q^{-1}\otimes I_N).
   \]
    Then  $(N,R)\in YB(\Mat)$.
\end{lemma}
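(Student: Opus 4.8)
The plan is to verify the two defining conditions for membership in $YB(\Mat)$ (Definition~\ref{de:YBC}): invertibility of $R$, which is immediate, and the Yang--Baxter equation \eqref{eq:YBE} in $\Mat^N(3,3)$, which is where the work lies. Invertibility holds because $R = (Q\otimes I_N)\,R'\,(Q\otimes I_N)^{-1}$ is conjugate to the invertible matrix $R'$. The crucial preliminary is that the hypothesis $Q\in\Aut(N,R')$, i.e.\ $(Q\otimes Q)R' = R'(Q\otimes Q)$, lets us rewrite $R$ in a second way: substituting $R' = (Q^{-1}\otimes Q^{-1})R'(Q\otimes Q)$ into the definition and cancelling adjacent factors yields
\[
R \;=\; (Q\otimes I_N)\,R'\,(Q^{-1}\otimes I_N) \;=\; (I_N\otimes Q^{-1})\,R'\,(I_N\otimes Q).
\]

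Next I would move to $\Mat^N(3,3)$. Set $R'_{12} = R'\otimes I_N$ and $R'_{23} = I_N\otimes R'$, and for $g\in GL_N$ write $g_1,g_3$ for $g$ inserted into the first, resp.\ third, tensor slot. Then $Q_1$ and $Q_3$ commute, $Q_1$ commutes with $R'_{23}$, and $Q_3$ commutes with $R'_{12}$, since the supports are disjoint. Using the \emph{first} presentation of $R$ to expand the left factor $R\otimes I_N$, and the \emph{second} presentation to expand $I_N\otimes R$, gives
\[
R\otimes I_N \;=\; Q_1\,R'_{12}\,Q_1^{-1}, \qquad I_N\otimes R \;=\; Q_3^{-1}\,R'_{23}\,Q_3 .
\]

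I would then substitute these into both sides of \eqref{eq:YBE}. On each side the three interior factors ($R'_{12}$ or $R'_{23}$) are separated by words in $Q_1^{\pm1}$ and $Q_3^{\pm1}$; sliding $Q_3^{\pm1}$ past $R'_{12}$, sliding $Q_1^{\pm1}$ past $R'_{23}$, and cancelling inverse pairs collapses the left-hand side to $Q_1Q_3^{-1}\,(R'_{12}R'_{23}R'_{12})\,Q_3Q_1^{-1}$ and the right-hand side to $Q_1Q_3^{-1}\,(R'_{23}R'_{12}R'_{23})\,Q_3Q_1^{-1}$. These agree by the Yang--Baxter equation for $R'$, so $R$ satisfies \eqref{eq:YBE} and hence $(N,R)\in YB(\Mat)$.

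The only real obstacle is bookkeeping, and the asymmetric choice of presentations is precisely what tames it. If one instead used the single presentation $R = (Q\otimes I_N)R'(Q\otimes I_N)^{-1}$ for both factors, the residual $Q$-strings on the two sides do not cancel against each other directly: one is left with a discrepancy amounting to $Q^{\otimes 3}$ conjugating $R'_{12}R'_{23}R'_{12}$, which then requires the extra observation that $Q^{\otimes 3}$ commutes with each of $R'_{12}$ and $R'_{23}$ (again a consequence of $(Q\otimes Q)R' = R'(Q\otimes Q)$). Both routes work; using the dual presentation of $R$ just makes the two sides line up term by term.
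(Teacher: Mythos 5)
Your proposal is correct and follows essentially the same route as the paper: both establish the dual presentation $R=(Q\otimes I_N)R'(Q^{-1}\otimes I_N)=(I_N\otimes Q^{-1})R'(I_N\otimes Q)$ from $Q\in\Aut(N,R')$, expand $R\otimes I_N$ and $I_N\otimes R$ using the two presentations respectively, and slide the disjointly-supported $Q$-factors to collapse both sides of \eqref{eq:YBE} to a common conjugate of the corresponding expression in $R'$. Your closing remark about why the asymmetric choice of presentations is the right bookkeeping device matches the paper's "making liberal use of" its key equation.
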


\begin{proof} 
     We only need to check that \ppmm{the YBE} \eqref{eq:YBE} is satisfied.  The key observation is that \begin{equation}\label{dseq}
         (Q\otimes I_N) R'(Q^{-1}\otimes I_N)
    =(I_N\otimes Q^{-1})R'(I_N\otimes Q),
     \end{equation} since we may freely conjugate 
 $R'$ by $(Q\otimes Q)$ as they commute.
 
Let $R_1:=R\otimes \Id_N$, $R_2:=\Id_N\otimes R$,
$R'_1:=R'\otimes \Id_N$ and $R'_2:=\Id_N\otimes R'$ as usual.
Making liberal use of \eqref{dseq}, we have
\begin{eqnarray*}
    R_1R_2R_1&=&(Q\otimes \Id_N\otimes \Id_N )R'_1 (Q^{-1}\otimes \Id_N \otimes Q^{-1})R'_2(Q\otimes \Id_N \otimes Q)R'_1(Q^{-1}\otimes \Id_N \otimes \Id_N)\\
&=&(Q\otimes \Id_N\otimes Q^{-1} )R'_1 R'_2R'_1(Q^{-1}\otimes \Id_N \otimes Q),\end{eqnarray*} and similarly \[R_2R_1R_2=(Q\otimes \Id_N\otimes Q^{-1} )R'_2 R'_1R'_2(Q^{-1}\otimes \Id_N \otimes Q).\]
    Thus, the result follows, using that $(N,R')\in YB(\Mat)$.
\end{proof}

We will say that such an $R'$ and $R$ are \emph{DS-equivalent,} where $DS$ is an abbreviation of Doikou-Smoktunowicz.

\begin{remark} 
    Suppose $Q,Q'\in\Aut(N,R')$ as in   Lemma~\ref{le:DSequiv}. 
    Then conjugating by $(Q\otimes Q'^{-1})$ is the same as conjugating by $(QQ'^{-1}\otimes \Id_N)$. 
\end{remark}

{
In fact this DS-equivalence implies $\infty$-equivalence:}
\begin{theorem}\label{th:ds is infinity}
 Let $(N,R)\in YB(\Mat)$.
Suppose that $Q\in \AutYB(N,R)$, and define $S=(\Id_N\otimes Q)R(\Id_N\otimes Q^{-1})$.  Then $R$ and $S$ are $\infty$-equivalent.   
\end{theorem}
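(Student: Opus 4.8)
The plan is to build, for every $n\ge 0$, an invertible matrix $T_n\in\Mat^N(n,n)$ with $T_n\,\rho_n^S(\beta)=\rho_n^R(\beta)\,T_n$ for all $\beta\in B_n$, and then invoke the identification in \S\ref{ss:p} of $\infty$-equivalence with isomorphism in $\MoonFun(\Bcat,\Mat)$. Since $\Bcat$ has no morphisms between distinct objects, the naturality condition on a family $(T_n)_n$ imposes no relations across different $n$; it is exactly this family of commuting identities. Moreover each $\rho_n^{(-)}$ is a group homomorphism and $B_n=\langle\sigma_1,\dots,\sigma_{n-1}\rangle$, so it suffices to verify $T_n\,\rho_n^S(\sigma_i)=\rho_n^R(\sigma_i)\,T_n$ for $1\le i\le n-1$. (That $(N,S)\in YB(\Mat)$, so that the $\rho_n^S$ are defined, is Lemma~\ref{le:DSequiv}.)

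First I would fix notation at the level of single tensor factors. For $1\le j\le n$ put $Q_j:=\Id_N^{\otimes(j-1)}\otimes Q\otimes\Id_N^{\otimes(n-j)}\in\Mat^N(n,n)$; these matrices pairwise commute, and $Q_j$ commutes with $\rho_n^R(\sigma_i)$ (see \eqref{eq:matrep}) whenever $j\notin\{i,i+1\}$. The sole point at which the hypothesis $Q\in\AutYB(N,R)$ is used is the observation that the product $Q_iQ_{i+1}=\Id_N^{\otimes(i-1)}\otimes(Q\otimes Q)\otimes\Id_N^{\otimes(n-i-1)}$ also commutes with $\rho_n^R(\sigma_i)$, because $R$ commutes with $Q\otimes Q$. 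Reading off the definition $S=(\Id_N\otimes Q)R(\Id_N\otimes Q^{-1})$ factorwise gives immediately $\rho_n^S(\sigma_i)=Q_{i+1}\,\rho_n^R(\sigma_i)\,Q_{i+1}^{-1}$.

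The heart of the argument is to choose a single ``gauge'' transformation that simultaneously realises all of these $i$-dependent local conjugations. I would take
\[
T_n:=\Id_N\otimes Q^{-1}\otimes Q^{-2}\otimes\cdots\otimes Q^{-(n-1)}=\prod_{j=1}^{n}Q_j^{\,1-j},
\]
which is visibly invertible. In $T_n^{-1}\rho_n^R(\sigma_i)T_n=\bigl(\prod_{j}Q_j^{\,j-1}\bigr)\rho_n^R(\sigma_i)\bigl(\prod_{j}Q_j^{\,1-j}\bigr)$ every factor with $j\notin\{i,i+1\}$ commutes past $\rho_n^R(\sigma_i)$ and cancels its partner, leaving $Q_i^{\,i-1}Q_{i+1}^{\,i}\,\rho_n^R(\sigma_i)\,Q_i^{\,1-i}Q_{i+1}^{-i}$. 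Writing $Q_i^{\,i-1}Q_{i+1}^{\,i}=Q_{i+1}(Q_iQ_{i+1})^{\,i-1}$ and pulling the central factor $(Q_iQ_{i+1})^{\,i-1}$ through $\rho_n^R(\sigma_i)$ collapses this to $Q_{i+1}\,\rho_n^R(\sigma_i)\,Q_{i+1}^{-1}=\rho_n^S(\sigma_i)$. Hence $\rho_n^R(\sigma_i)\,T_n=T_n\,\rho_n^S(\sigma_i)$ for all $i$, so the $T_n$ assemble into a natural isomorphism and $R,S$ are $\infty$-equivalent.

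Nothing here is technically hard once $T_n$ is in hand; the only genuine step is spotting that the ``geometric'' exponents $1-j$ make the position-$(i{+}1)$ conjugations telescope into one global transformation (so that the same $T_n$ works for every $i$ at once). I would close with the remark that $T_n$ is not in general of the form $f^{\otimes n}$ — its tensor factors $Q^{-k}$ are distinct for distinct $k$ — so this typically produces an isomorphism in $\MoonFun(\Bcat,\Mat)$ that is \emph{not} an isomorphism in $\MonFun(\Bcat,\Mat)\cong YB(\Mat)$, precisely the phenomenon flagged in \S\ref{why not moon}.
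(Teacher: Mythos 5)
Your proof is correct and is essentially the paper's: your $T_n$ is exactly the inverse of the conjugating matrix $A_n=\Id_N\otimes Q\otimes\cdots\otimes Q^{n-1}$ used there, and both arguments hinge on the same key observation that $(Q\otimes Q)^{k}$ commutes with $R$, so the excess powers of $Q$ can be pulled through $\rho_n^R(\sigma_i)$ leaving the single conjugation by $Q_{i+1}$ that produces $S$. The only cosmetic difference is that the paper verifies $A_n\rho_n^R A_n^{-1}=\rho_n^S$ by induction on $n$ (so only the new generator $\sigma_n$ needs checking at each step), whereas you check all generators $\sigma_i$ directly at fixed $n$ via the telescoping cancellation.
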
\begin{proof} Define $A_n:=\Id_N\otimes Q\otimes\cdots\otimes Q^n$.  We claim that $A_n\rho_n^RA_n^{-1}=\rho_n^S$.  It is clearly true for $n=1,2$.  Assuming it holds for some $n$, to verify it for $n+1$ only requires checking that $A_{n+1}\rho_{n+1}^R(\sigma_n)=\rho_{n+1}^S(\sigma_n)A_{n+1}$. This is as follows:
\[ A_{n+1}\rho_n^R(\sigma_n)A_{n+1}^{-1}=\Id_N^{\otimes n-1}\otimes[(Q^{n}\otimes Q^{n+1})(R)(Q^{n}\otimes Q^{n+1})^{-1}]=\Id_N^{\otimes n-1}\otimes S=\rho^S_n(\sigma_n)
\]
since $(Q\otimes Q)^{n}$ commutes with $R$. 
\end{proof}


\begin{example}  Local equivalence does not imply DS-equivalence, and hence in particular $\infty$-equivalence does not imply $DS$-equivalence.  For an example, 
    consider the monomial solution $ \left[ \begin {array}{cccc} k&0&0&0\\ \noalign{\medskip}0&0&p&0
\\ \noalign{\medskip}0&q&0&0\\ \noalign{\medskip}0&0&0&s\end {array}
 \right].$  For distinct $k,q,p,s$ the only invertible $2\times 2$ matrices that commute with $R$ are the diagonal matrices.  Therefore, its DS equivalence class consists of monomial matrices as well.  Thus it is easy to find a locally equivalent YBO that is not DS equivalent, simply conjugate by $A\otimes A$ for some non-monomial matrix $A$.
\end{example}

\subsection{Symmetries of Categories}\label{ss: symmetries}

{In a 
classification program one may first use 'global' symmetries to reduce the computational complexity of the problem.  This can often facilitate finding a normal form, or reducing the number of parameters.  Categorically these 
symmetries 
sometimes take the form of autoequivalences.   Among the autoequivalences are those that are essentially internal to the category.  For example, in $YB(\Mat^N)$ a local basis change by means of an invertible $N\times N$ matrix $Q$ are of this type.  Conjugation by the flip matrix is not of this type, but is an autoequivalence of $YB^1(\Mat^N)$ \ref{lem:flip is auto for ybmatn}. Even more simple is the autoequivalence that rescales all the $(N,R)$ in $YB(\Mat^N)$. Beyond autoequivalences one could consider the transpose operation, which is not an endofunctor from $YB(\Mat^N)$, but could be used to reduce parameters.  We will explore various types of global symmetries.
\subsubsection{Inner Autoequivalences}  \label{ss:local} 

\begin{defin}
    For any category $\catC$, an autoequivalence $\F\colon \catC \to \catC$ is \emph{inner} if it is naturally isomorphic to the identity functor.
\end{defin}
 The following inner autoequivalences take a particularly simple form, and will be useful in the sequel. \begin{lemma}\label{def: PhiQ}
    Suppose $\X\subset\Mat^N$  and $Q\in\X(a,a)$. Define, on $YB^a(\X)$, the functor $\Phi_Q$ by \[\Phi_Q((a,R))=(a,Q^{\otimes 2} R(Q^{-1})^{\otimes 2})\] on objects and similarly $\Phi_Q(f)=QfQ^{-1}$ on morphisms $f:(a,R)\rightarrow (a,S)$.  Then $\Phi_Q$ is an inner autoequivalence. 
\end{lemma}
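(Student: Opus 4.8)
The plan is to verify in turn that $\Phi_Q$ is well defined on objects, that it is a functor, and that the single family of morphisms $\eta_{(a,R)}:=Q$ assembles into a natural isomorphism $\id_{YB^a(\X)}\Rightarrow\Phi_Q$. The last point is precisely the assertion that $\Phi_Q$ is inner, and since any functor naturally isomorphic to the identity is automatically an equivalence, this also yields that $\Phi_Q$ is an autoequivalence. (Alternatively one checks directly that $\Phi_Q\Phi_{Q'}=\Phi_{QQ'}$ and $\Phi_{\id}=\id_{YB^a(\X)}$, so that $\Phi_{Q^{-1}}$ is a strict two-sided inverse of $\Phi_Q$.)

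First I would check that $\Phi_Q((a,R))$ really is an object of $YB^a(\X)$. Write $R':=Q^{\otimes 2}R(Q^{-1})^{\otimes 2}$. Invertibility of $R'$ is immediate, and $R'\in\X(2a,2a)$ follows because $\X$ is closed under composition, tensor products and inversion of isomorphisms --- as holds for all the targets we consider, such as $\Mat^N$, $\Matcha^N$, $\Match^N$, $\MonMat^N$. The only genuine computation is that $R'$ satisfies the Yang--Baxter equation \eqref{eq:YBE}. Here I would use the mixed-product rule $(A\otimes B)(C\otimes D)=AC\otimes BD$ to record that
\[R'\otimes\id = Q^{\otimes 3}(R\otimes\id)(Q^{-1})^{\otimes 3},\qquad \id\otimes R' = Q^{\otimes 3}(\id\otimes R)(Q^{-1})^{\otimes 3};\]
consequently each of the two sides of \eqref{eq:YBE} for $R'$ is the conjugate by $Q^{\otimes 3}$ of the corresponding side for $R$, so the identity for $R'$ drops out of the identity for $R$.

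Next, functoriality and the natural isomorphism. If $f\colon(a,R)\to(a,S)$ is a morphism in $YB^a(\X)$, i.e. $S(f\otimes f)=(f\otimes f)R$, then a short Kronecker computation shows that $QfQ^{-1}$ satisfies the intertwining relation between $\Phi_Q(a,R)$ and $\Phi_Q(a,S)$, so $\Phi_Q(f)$ is indeed a morphism there; and $\Phi_Q(\id)=\id$, $\Phi_Q(g\circ f)=\Phi_Q(g)\circ\Phi_Q(f)$ hold because conjugation is multiplicative. To build the natural isomorphism I would take $\eta_{(a,R)}:=Q$, viewed in $\X(a,a)$: it is a morphism $(a,R)\to\Phi_Q(a,R)$ because $R'(Q\otimes Q)=Q^{\otimes 2}R(Q^{-1})^{\otimes 2}(Q\otimes Q)=(Q\otimes Q)R$, and it is invertible since $Q$ is. Naturality against $f$ reduces to $\Phi_Q(f)\,Q=Q\,f$, i.e. $(QfQ^{-1})Q=Qf$, which is trivial. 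This exhibits $\Phi_Q\cong\id_{YB^a(\X)}$, hence $\Phi_Q$ is an inner autoequivalence.

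There is no serious obstacle here: the statement is essentially a restatement of the familiar fact that conjugating every strand of a braid representation simultaneously by a fixed $Q$ produces an isomorphic representation, and the only thing to watch is that the conjugate remains inside the chosen target $\X$ (automatic for the monoidal subcategories we work with). The $Q^{\otimes 3}$-conjugation identity in the display above is the single line that does the real work.
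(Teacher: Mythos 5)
Your proof is correct and follows essentially the same route as the paper: show $\Phi_Q$ is a functor with inverse $\Phi_{Q^{-1}}$, then exhibit the componentwise natural isomorphism built from $Q$ itself (the paper uses $\eta_{(a,R)}=Q^{-1}$ in the opposite direction, which is the same datum). You simply fill in the details the paper leaves as "immediate" --- notably the $Q^{\otimes 3}$-conjugation check that the conjugated $R$ still satisfies the Yang--Baxter equation, and the (reasonable) caveat that $\X$ must be closed under the relevant operations for $\Phi_Q$ to land back in $YB^a(\X)$.
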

\begin{proof}  It is immediate that $\Phi_Q$ defines a functor with inverse $\Phi_{Q^{-1}}$.
    A natural isomorphism $\eta$ between $\Phi_Q$ and $\Id$ is  given in components by 
\[\eta_{(a,R)}=Q^{-1}\in \HomYB(\Phi_Q((a,R)),(a,R)),\] 
as one can check that $(Q^{-1})^{\otimes 2}$ intertwines $Q^{\otimes 2} R(Q^{-1})^{\otimes 2})$ and $R$.
\end{proof}

\mdef Although we will not have a particular use for them here, we can characterise inner autoequivalences more generally. Consider $\Psi$ such an inner autoequivalence of $YB^a(\X)$ for some $\X\subset \Mat^N$. What does the natural isomorphism $\eta: \Psi\Rightarrow\Id$ look like? Notice that the only morphisms are isomorphisms. The components are isomorphisms \[\eta_{(a,R)}:\Psi(a,R)=(a,\psi(R))\cong (a,R),\] represented by an invertible $Q_R\in\X(a,a)$ such that $Q_R\otimes Q_R\psi(R)=RQ_R\otimes Q_R$.  In particular $\psi(R)=(Q_R\otimes Q_R)^{-1}R(Q_R\otimes Q_R).$  If $\varphi:(a,R)\rightarrow (a,S)$ is an (iso)morphism given by $A\in\X^a(1,1)$ with $(A\otimes A)R=S(A\otimes A)$ then $\Psi(\varphi):(a,\psi(R))\rightarrow (a,\psi(S))$ is given by the composition $Q_S^{-1}AQ_R$.  In particular \emph{any} inner autoequivalence in $YB^a(\X)$ may be constructed by choosing an invertible $Q_R\in\X(a,a)$ for each $(a,R)$, defining $\Psi(a,R)=(a,Q_R^{-1}RQ_R)$ on objects and as above on morphisms.

\mdef The general inner autoequivalences of $\MoonFun^a(\Bcat,\X)$ for $\X\subset \Mat^N$ are similarly classified.  For each object $F$ we fix a natural isomorphism $\xi_F:F\rightarrow \tau(F)$ (see \ref{why not moon} on how to assemble such isomorphisms).  Then define $\Xi$ on objects by $\Xi(F)=\xi_F(F)$.  For morphisms $\varphi:F\rightarrow G$ we define $\Xi(\varphi)$ by $\xi_G^{-1}\varphi\xi_F$.  

\begin{example}
    Consider $\YBone(\Match^N)$. The morphisms $Q\in\Match^N(1,1)$ are all diagonal matrices.  Thus the inner autoequivalences of $\YBone(\Match^N)$ of the form $\Phi_Q$ are not particularly discerning from the perspective of classification.  
\end{example}

\begin{example}
    The inner autoequivalences of $\YBone(\X)$ of the form $\Phi_Q$ for $\X\subset \Mat^N$ a monoidal groupoid such at $\PermMat^N,\MonMat^N,\UMat^N$ and $\OMat^N$ correspond to the $Q\in\X(1,1)$, which in this case are exactly the $N\times N$ matrices of the given form.  
\end{example}

\subsubsection{More General Symmetries}  \label{ss:moregenauto}

There are natural non-inner autoequivalences of $\YBone(\Mat^N)$ that are useful for classification.  For example:
\begin{lemma}\label{lem:flip is auto for ybmatn}
 Let $P:=P_{N,N}\in\Mat^N(2,2)$ be the usual flip, {as in (\ref{ybmat is monoidal})}.  
Then $(1,R)\mapsto (1,PRP)$ is an non-inner autoequivalence of $\YBone(\Mat^N)$ that is the identity on morphisms.
\end{lemma}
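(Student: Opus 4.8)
The plan is to check four things about the assignment $\Phi\colon (1,R)\mapsto(1,PRP)$, $Q\mapsto Q$ (writing $P=P_{N,N}$): that it lands in $\YBone(\Mat^N)$ on objects; that it is a well-defined functor acting as the identity on each morphism set; that this functor is an autoequivalence; and — the only substantive point — that it is not inner. The two facts I will use constantly are that $P^{2}=\id$ and that $P$ commutes with $Q^{\otimes 2}$ for every $N\times N$ matrix $Q$ (indeed $P(A\otimes B)P=B\otimes A$, so $P(Q\otimes Q)P=Q\otimes Q$).

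For objects: if $(1,R)\in\YBone(\Mat^N)$ then $PRP$ is invertible (as $P,R$ are) and satisfies the braid Yang--Baxter equation. The latter is the standard fact that conjugation by the flip preserves solutions; within this paper it also follows by composing the isomorphism functors $\Fox$ of (\ref{pa:Bflipp}) and $\Farr$ of (\ref{de:Farr}), once one observes that the level-$2$ flip matrix $\Fff_2$ of (\ref{de:flip}) is exactly $P_{N,N}$. For morphisms: if $Q\in\HomYB((1,R),(1,S))$, i.e.\ $(Q\otimes Q)R=S(Q\otimes Q)$, then
\[(Q\otimes Q)PRP=P(Q\otimes Q)RP=PS(Q\otimes Q)P=PSP(Q\otimes Q),\]
so the same matrix $Q$ lies in $\HomYB((1,PRP),(1,PSP))$. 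Thus $\Phi(Q)=Q$ defines an endofunctor of $\YBone(\Mat^N)$ that is the identity on each hom-set; composition and identities are preserved trivially. Finally $P^{2}=\id$ gives $P(PRP)P=R$, so $\Phi\circ\Phi=\Id$ on objects and (obviously) on morphisms; hence $\Phi$ is an involutive isomorphism of categories, in particular an autoequivalence.

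It remains to show $\Phi$ is non-inner. The key reduction is that a natural isomorphism $\Id\Rightarrow\Phi$ would provide, for every object, an isomorphism $(1,R)\cong\Phi(1,R)=(1,PRP)$ in $\YBone(\Mat^N)$, so it suffices to produce a single $(1,R)$ with $(1,R)\not\cong(1,PRP)$. I would take the group-type solution (see \ref{def:group-type} and \cite{AS}) $R=(g\otimes I_N)P$, where $g\in GL_N$ is a fixed \emph{non-scalar} matrix (available since $N\geq 2$); equivalently $R\ket{ij}=g\ket{j}\otimes\ket{i}$, which satisfies the Yang--Baxter equation. Then $PRP=(I_N\otimes g)P$ (as $P(g\otimes I_N)P=I_N\otimes g$), and for any invertible $Q$ one computes, again using $P(Q\otimes Q)=(Q\otimes Q)P$,
\[(Q\otimes Q)R(Q\otimes Q)^{-1}=(QgQ^{-1}\otimes I_N)P.\]
So an isomorphism $(1,R)\cong(1,PRP)$ would force $A\otimes I_N=I_N\otimes g$ with $A=QgQ^{-1}$; evaluating both sides on the basis vectors $e_i\otimes e_j$ forces $g$ to be a scalar matrix, contradicting the choice of $g$. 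Hence no such isomorphism exists and $\Phi$ is not inner.

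I expect the first three steps to be routine algebra with the flip matrix; the real content is the last step. Essentially every easily-computed invariant of $(1,R)$ — its spectrum, the endomorphism algebra $\End(N,R)$, the product-form eigenvectors, the $R$-invariant subspaces of the form $W\otimes W$ — is itself unchanged under conjugation by $P$ (because $P$ fixes all vectors $w\otimes w$ and all operators $Q\otimes Q$), so none of these can separate $(1,R)$ from $(1,PRP)$. One therefore has to exhibit a solution whose asymmetry between its two tensor slots cannot be undone by a local change of basis $Q\otimes Q$, and the group-type family supplies exactly such an example; locating it is the only genuine obstacle.
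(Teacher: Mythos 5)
Your proof is correct, and on the functoriality side it coincides with the paper's argument: both rest on $P^{2}=\id$ and the naturality identity $P(Q\otimes Q)=(Q\otimes Q)P$, which shows that the same matrix $Q$ intertwines $PRP$ and $PSP$ whenever it intertwines $R$ and $S$, and that the functor squares to the identity. Where you genuinely diverge is the non-innerness claim. The paper disposes of it in a single sentence --- asserting that conjugation by $P$ is not realised as conjugation by any $Q\otimes Q$ --- without exhibiting a witness. You supply that missing content: you correctly reduce non-innerness to producing one object $(1,R)$ with $(1,R)\not\cong(1,PRP)$ in $\YBone(\Mat^N)$ (valid, since the components of a natural isomorphism $\Id\Rightarrow\Phi$ are isomorphisms), and then take the group-type solution $R=(g\otimes I_N)P$ with $g$ non-scalar, for which $(Q\otimes Q)R(Q\otimes Q)^{-1}=(QgQ^{-1}\otimes I_N)P$ can never equal $PRP=(I_N\otimes g)P$ because $A\otimes I_N=I_N\otimes g$ forces $g$ scalar. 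Your closing remark --- that spectrum, $\End(N,R)$, product-form eigenvectors and $W\otimes W$-invariant subspaces are all $P$-conjugation invariants and hence useless here --- correctly identifies why an explicit asymmetric example is required. If anything your argument is more complete than the paper's; the only (harmless) caveat is that it needs $N\geq 2$ for a non-scalar $g$ to exist, which is forced anyway since for $N=1$ the flip is the identity and the functor is trivially inner.
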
 \begin{proof}
    It is well-known that $(1,PRP)\in \YBone(\Mat^N)$ iff $(1,R)\in \YBone(\Mat^N)$.  We must check that if $A\in\HomYB((1,R),(1,S))$ then $A\in\HomYB((1,PRP),(1,PSP))$.  But since $P$ commutes with $A\otimes A$ this is immediate.  On the other hand, conjugation of $R$ by $P$ is not equivalent to conjugation by some $Q\otimes Q$ with $Q$ commuting with every $A\in\End(N)$, so this is not an inner autoequivalence.
\end{proof}    {We do not know if $R\mapsto PRP$ is lifts to an autoequivalence of $\MoonFun^1(\Bcat,\Mat^N)$.  The difficulty is that we do not know how to define the functor on (non-monoidal) morphisms.  On the other hand, it is definitely not an \emph{inner} autoequivalence of $\MoonFun^1(\Bcat,\Mat^N)$: the Gaussian solutions of Example \ref{ex:9x9unitaries} provide counterexamples.  In these cases $R$ and $PRP$ yield inequivalent $B_3$ representations.} This shortcoming notwithtanding, often sees classifications that incorporate this flip symmetry, see e.g. \cite{Hietarinta1992}.

\ignore{{
The transpose operation $(N,R)\mapsto (N,R^T)$ yields an anti-autoequivalence of $YB(\Mat)$, acting on morphisms by $A\mapsto A^T$.  This is due to the contravariant nature of the transpose operation.  Nonetheless, this symmetry can be useful in classifications \cite{HMR1}.
\ft{[It feels like there is a sentence missing here, at the moment I can't understand what the 'this` is referring to, or why it makes sense for the last sentence to start `Nonetheless'. Doesn't $R\mapsto R^T$ yield a contravariant functor, so isn't the point here that this is not an auto equivalence, but an equivalence between  $YB(\Mat)$ and $YB(\Mat)^{op}$?]}\ecr{[this is addressed elsewhere, so will presumably be deleted.  There are bookkeeping symmetries and real symmetries, is perhaps the point: scalar multiples for example.  Transpose doesn't even preserve natural isomorphism, since it swaps socles and cosocle?]}}}

As we have seen, inner autoequivalences for $\YBone(\X)$ with $\X\subset\Mat^N$ may not provide useful symmetries.  A very natural idea is to look at the inner autoequivalences of $\YBone(\Mat^N)$ that stabilise $\YBone(\X)$.  This can mean two things: stabilise the objects, or restrict to an autoequivalence.
For $\X=\Match^N$ we have:
\begin{theorem}\label{thm: outer auto match}
       The inner autoequivalences of $\YBone(\Mat^N)$ of the form $\Phi_Q$ that restrict to an autoequivalence of $\YBone(\Match^N)$ are those with $Q$ an invertible $N\times N$ monomial matrix.
\end{theorem}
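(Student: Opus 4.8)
The plan is to prove the two inclusions separately. The key observation is that the hypothesis forces $\Phi_Q$ to be a \emph{well-defined endofunctor} of $\YBone(\Match^N)$, hence to preserve morphisms and not merely objects, and the morphism condition alone already pins down $Q$; the object condition is then what makes the converse work.

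For the direction ``$\Phi_Q$ restricts $\Rightarrow Q$ monomial'', I would test $\Phi_Q$ on the single object $(1,\Id_{N^2})$, which lies in $\YBone(\Match^N)$ since $\Id_{N^2}$ is invertible, satisfies \eqref{def:ybobj} trivially, and is (trivially) charge conserving. Then $\Phi_Q(1,\Id_{N^2})=(1,\Id_{N^2})$, so the restricted functor must send $\EndYB(1,\Id_{N^2})$ — computed inside $\YBone(\Match^N)$ this is exactly $\Match^N(1,1)$, i.e. the set of diagonal $N\times N$ matrices — into itself, by $A\mapsto QAQ^{-1}$. Hence $QAQ^{-1}$ is diagonal for every diagonal $N\times N$ matrix $A$. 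Specialising to $A=\mathrm{diag}(1,2,\ldots,N)$, the matrix $QAQ^{-1}$ is diagonal and conjugate to $A$, so its diagonal entries are a permutation of $1,\ldots,N$; thus $QAQ^{-1}=PAP^{-1}$ for some permutation matrix $P$, so $P^{-1}Q$ commutes with a matrix of distinct eigenvalues and is therefore diagonal, whence $Q=P\,(P^{-1}Q)$ is monomial. (This is just the identification of the normaliser of the diagonal torus in $GL_N$ with the monomial matrices.)

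For the converse, write $Q=P\Delta$ with $P$ a permutation matrix and $\Delta$ an invertible diagonal matrix, so $Q\ket{i}=q_i\ket{\pi(i)}$ with $q_i\neq 0$ and $\pi\in\Sigma_N$. Then $Q^{\otimes 2}\ket{ij}=q_iq_j\ket{\pi(i)\pi(j)}$, so $Q^{\otimes 2}$ permutes the standard basis of $\C^N\otimes\C^N$ up to nonzero scalars; since for length-$2$ words $w$ is a place-permutation of $w'$ iff $\pi w$ is a place-permutation of $\pi w'$, conjugation by $Q^{\otimes 2}$ preserves the block structure defining $\Match^N(2,2)$. Hence for any $(1,R)\in\YBone(\Match^N)$ the matrix $Q^{\otimes 2}R(Q^{-1})^{\otimes 2}$ is again charge conserving, and by Lemma~\ref{def: PhiQ} it still satisfies the Yang--Baxter equation, so $\Phi_Q(1,R)\in\YBone(\Match^N)$. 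On morphisms, for diagonal $A$ one has $QAQ^{-1}=P(\Delta A\Delta^{-1})P^{-1}=PAP^{-1}$, again diagonal, so $\Phi_Q$ carries morphisms of $\YBone(\Match^N)$ to morphisms of $\YBone(\Match^N)$. Since $Q^{-1}$ is also monomial, $\Phi_{Q^{-1}}$ restricts in the same way, and $\Phi_Q\circ\Phi_{Q^{-1}}=\Phi_{QQ^{-1}}=\Phi_{\Id_N}$ is the identity functor (and likewise in the other order), so the restriction of $\Phi_Q$ is an autoequivalence of $\YBone(\Match^N)$.

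I do not expect a genuine obstacle here; the one point that needs care is fixing the precise meaning of ``restricts to an autoequivalence of $\YBone(\Match^N)$'', namely that $\Phi_Q$ must be a well-defined functor from $\YBone(\Match^N)$ to itself, and in particular must take morphisms to morphisms — this is exactly the hypothesis that forces $Q$ to be monomial. (Preservation of objects alone is strictly weaker, and showing it too forces $Q$ monomial would appear to need the classification of $\YBone(\Match^N)$ from \cite{MR1X}, which is why the morphism route is preferable.) A secondary routine check is that $\Match^N(1,1)$ really is the set of all diagonal $N\times N$ matrices, which holds because the place-permutation action of $\Sigma_1$ on length-$1$ words is trivial.
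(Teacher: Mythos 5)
Your proof is correct, but your ``only if'' direction takes a genuinely different route from the paper's. The paper extracts monomiality from the \emph{object} condition alone: it picks a charge-conserving Yang--Baxter operator $S$ with $S\ket{jj}=\beta_j\ket{jj}$ for distinct $\beta_j$ (whose existence it draws from the classification in \cite{MR1X}), writes out $Q\otimes Q\ket{ii}$, and uses the invariance of the span of the $\ket{jk}$, $j\neq k$, to force $\alpha_i=\beta_j$ whenever $q_{ij}\neq 0$, so that each row of $Q$ has a single nonzero entry. You instead extract monomiality from the \emph{morphism} condition at the fixed object $(1,\Id_{N^2})$: functoriality of the restriction forces $Q$ to conjugate $\Match^N(1,1)$ (the diagonal matrices) into itself, and the normaliser of the diagonal torus in $GL_N$ is the monomial group. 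Both arguments are valid proofs of the theorem as stated; what each buys is different. The paper's argument establishes the strictly stronger statement that merely \emph{stabilising the objects} of $\YBone(\Match^N)$ already forces $Q$ monomial (a distinction the paper flags explicitly just before the theorem), at the cost of invoking the existence of a suitable $S$; your argument is more self-contained and purely structural, but, as you yourself note, it uses the morphism-preservation part of the hypothesis and so does not recover that stronger object-only version. Your forward direction matches the paper's (conjugation by a monomial $Q^{\otimes 2}$ preserves the charge-conservation block structure and preserves diagonality of morphisms), though you supply the verification that the paper delegates to \cite{MR1X}. The only caveat worth recording is that your reading of ``restricts to an autoequivalence'' as requiring functoriality on morphisms is the correct reading of the statement, but it is not the hypothesis the paper actually uses in its own proof.
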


\begin{proof} 
    It is shown in \cite{MR1X} that if $Q$ is monomial then conjugation by $Q\otimes Q$ preserves charge conservation. Since the morphisms in $\YBone(\Match^N)$ are diagonal, and conjugation by a monomial matrix preserves this property, we see that $\Phi_Q$ with $Q$ monomial restricts to an autoequivalence of $\YBone(\Match^N)$.  Thus it is enough to show that this property is \emph{only} enjoyed by monomial $Q$.  

    Let $S$ be a charge conserving YBO such that $S\ket{jj}=\beta_j\ket{jj}$ has each $\beta_j$ distinct, {note that such an $S$ does exist as explained in Section~\ref{s:classifications}}. Now suppose that $(Q\otimes Q)^{-1}S(Q\otimes Q)=R$ is charge conserving, and define $\alpha_i$ by $R\ket{ii}=\alpha_i\ket{ii}$.  Now fix $i$, and let $Q\ket{i}=\sum_j q_{ij}\ket{j}$ so that $Q\otimes Q\ket{ii}=\sum_j(q_{ij})^2\ket{jj}+\sum_{j\neq k}q_{ij}q_{ik}\ket{jk}$. Since $S(Q\otimes Q)=(Q\otimes Q)R$ and both $R$ and $S$ leave invariant the subspace spanned by those $\ket{jk}$ with $j\neq k$, we find that \[\sum_j\alpha_i(q_{ij})^2\ket{jj}=\sum_j(q_{ij})^2\beta_j\ket{jj},\] for any fixed $j$.  In particular, if $q_{ij}\neq 0$ we have $\alpha_i=\beta_j$.  But since the $\beta_j$ are distinct, for each fixed $i$ there is at most, therefore, exactly one $j$ for which $q_{ij}\neq 0$.  Thus $Q$ is a monomial matrix.
    
   \end{proof}

 Since  $\Match$ is contained in the diagonal of $\Matcha$, the corresponding $\YBone(\Matcha^N)$-stabilising autoequivalences of $\YB^1(\Mat^N)$ of the form $\Psi_Q$ also have $Q$ monomial matrices.  In fact, we expect it to be much smaller:
\begin{conjecture}\label{conj: Matcha outer}
       The inner autoequivalences of $\YBone(\Mat^N)$ of the form $\Phi_Q$ that stabilise $\YBone(\Matcha^N)$ are those with $Q=DS$ where $D$ is an invertible diagonal matrix and $S$ is the permutation matrix associated with the transposition $(1\; N)(2\;N-1)\cdots$.
\end{conjecture}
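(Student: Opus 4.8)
The plan rests on the observation that $M\in\Mat^N(2,2)$ is additive charge conserving exactly when it is homogeneous of degree $0$ for the $\Z$-grading on $(\C^N)^{\otimes 2}$ with $\deg\ket{ij}=i+j$; I would then argue in two directions, echoing the structure of the proof of Theorem~\ref{thm: outer auto match}. Throughout, let $S$ denote the reversal permutation matrix $\ket{i}\mapsto\ket{N+1-i}$ (the $S$ of the statement), so that $S^{\otimes 2}$ carries the degree-$d$ part onto the degree-$(2N+2-d)$ part.

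For sufficiency: if $Q=DS$ with $D$ invertible diagonal, then $\Phi_Q$ preserves $\Ob(\YBone(\Matcha^N))$ because conjugation by $D^{\otimes 2}$ fixes the grading on the nose, while conjugation by $S^{\otimes 2}$ applies the involution $d\mapsto 2N+2-d$ of $\Z$ to the degrees of rows and columns simultaneously, and either move sends ``row-degree $=$ column-degree'' to itself. On morphisms — which in $\YBone(\Matcha^N)$ are diagonal, since $\Matcha^N(1,1)$ consists of diagonal matrices — $\Phi_Q$ sends a diagonal $A$ to $DSAS^{-1}D^{-1}$, still diagonal; and $Q^{-1}$ is again of the form (diagonal)$\,{\cdot}\,S$, so $\Phi_{Q^{-1}}$ restricts to a two-sided inverse. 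Hence $\Phi_Q$ restricts to an autoequivalence of $\YBone(\Matcha^N)$.

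For necessity I would first pin $Q$ to be monomial. The flip $(1,P_{N,N})$ lies in $\YBone(\Matcha^N)$ and has every invertible diagonal matrix as an automorphism there (since $A^{\otimes 2}$ commutes with $P_{N,N}$); as $\Phi_Q$ must carry these automorphisms into $\Matcha^N(1,1)$, the matrix $QAQ^{-1}$ is diagonal for every diagonal $A$, forcing $Q$ into the normaliser of the diagonal torus of $GL_N(\C)$, i.e.\ $Q=D\pi$ with $D$ invertible diagonal and $\pi$ a permutation matrix. By the sufficiency paragraph (with $S=I$) the factor $\Phi_D$ already restricts to $\YBone(\Matcha^N)$, so it remains to show $\pi\in\{\Id,S\}$. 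Using $(P_\pi^{\otimes 2}MP_\pi^{-\otimes 2})_{v,w}=M_{\pi^{-1}v,\,\pi^{-1}w}$ (acting letterwise on length-two words), the requirement that $\Phi_{P_\pi}$ keep objects additive charge conserving becomes: for every additive charge conserving Yang--Baxter operator $M$ and every nonzero entry $M_{v,w}$ one has $\pi(v_1)+\pi(v_2)=\pi(w_1)+\pi(w_2)$. Applied to the ``midpoint'' pair $(v,w)=((i-1,i+1),(i,i))$ this shows that if some such $M$ has a nonzero entry coupling $\ket{(i-1)(i+1)}$ (or $\ket{(i+1)(i-1)}$) with $\ket{ii}$, then $\pi(i-1)+\pi(i+1)=2\pi(i)$; ranging over $i$ this makes $\pi$ arithmetic, hence an affine bijection of $\{1,\dots,N\}$, and for $N\ge 3$ the only two of those are $\Id$ and $S$ (for $N\le 2$ every permutation is already $\Id$ or $S$). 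So the statement reduces to: for each consecutive triple $\{i-1,i,i+1\}\subseteq\{1,\dots,N\}$ there exists $(1,M)\in\YBone(\Matcha^N)$ with a nonzero entry coupling those two degree-$2i$ basis vectors — necessarily an entry forbidden by charge conservation, which is why $\Match^N$ alone cannot detect it.

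The hard part is exactly that last existence claim. For $N=3$ I would read it off the classification of $\YBone(\Matcha^3)$ in \cite{HMR1}, whose genuinely additive (non-charge-conserving) solutions couple $\ket{13},\ket{31},\ket{22}$; this settles $N\le 3$. For general $N$ the natural route is a block construction: embed such a three-dimensional solution on the coordinates $\{i-1,i,i+1\}$ into $\C^N$ by iterated $\boxplus_\mu$-sums with scalar summands on the remaining coordinates, observing that each step contributes only ``mixed'' blocks $\ket{ab}\mapsto\mu\ket{ba}$, which are degree-preserving, so additive charge conservation of the whole persists while the Yang--Baxter equation survives by the $\boxplus_\mu$ lemma recalled above. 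Verifying that this construction really outputs a Yang--Baxter operator with the wanted non-charge-conserving support at an arbitrary prescribed degree — equivalently, that $\Matcha^N\setminus\Match^N$ is ``visible'' through Yang--Baxter operators at every degree — is the one step I do not see how to carry out in general without the (currently unavailable for $N\ge 4$) classification of $\YBone(\Matcha^N)$, and is the reason the statement is posed as a conjecture rather than a theorem.
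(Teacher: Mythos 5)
This statement is a conjecture in the paper, and the paper does not prove it either: the authors verify sufficiency (essentially as you do, via the grading and the degree reversal $d\mapsto 2N+2-d$), and for necessity they show only that a $\Phi_Q$ preserving the \emph{entire} matrix set $\Matcha^N(2,2)$ must have $Q=DS$ — their argument hinges on a single generic $M$ with every degree-allowed entry nonzero, which need not satisfy the Yang--Baxter equation. Their explicitly acknowledged gap is that the Yang--Baxter solutions inside $\Matcha^N$ form a smaller set and might have a larger stabiliser. Your route to necessity is genuinely different, and the comparison cuts both ways.

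Where you gain: the consecutive-triple argument replaces the paper's generic $M$ by the far weaker requirement that for each $2\le i\le N-1$ \emph{some} $(1,M)\in\YBone(\Matcha^N)$ couples $\ket{(i-1)(i+1)}$ to $\ket{ii}$. You flag this existence as the step you cannot complete, but it is closable with material already in the paper: Example~\ref{ex:projection} exhibits an explicit invertible $S\in\Matcha^3(2,2)$ satisfying the YBE with nonzero couplings between $\ket{22}$ and both $\ket{13}$ and $\ket{31}$, and your iterated $\boxplus_\mu$ embedding of that block at positions $\{i-1,i,i+1\}$ (with identity or scalar summands elsewhere) does preserve both the YBE and additive charge conservation, since the mixed blocks $\ket{ab}\mapsto\mu\ket{ba}$ and the degree-shifted three-dimensional block are all homogeneous of degree zero. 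So the step you are pessimistic about is in fact fine.

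Where the real gap sits: your monomiality step. You derive $Q=D\pi$ by demanding that $\Phi_Q$ carry the diagonal automorphisms of the flip $(1,P_{N,N})$ into $\Matcha^N(1,1)$, i.e.\ you use preservation of \emph{morphisms}. The paper explicitly distinguishes ``stabilise the objects'' from ``restrict to an autoequivalence'', states Theorem~\ref{thm: outer auto match} with the latter wording but the conjecture with the former, and frames its open gap entirely at the object level. Under the object-level reading, the mechanism of the proof of Theorem~\ref{thm: outer auto match} fails — an additive charge conserving conjugate need not preserve the span of the $\ket{jk}$ with $j\ne k$ — and nothing in your proposal forces $Q$ monomial from object-stabilisation alone. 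So under the functor-restriction reading your sketch essentially closes the conjecture once the $\boxplus_\mu$ construction is written out, but under the reading the paper appears to intend, the missing piece is monomiality, not the existence claim you identify.
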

\mdef To see that these autoequivalences do stabilise $\YBone(\Matcha^N)$ observe that  diagonal matrices clearly stabilise $\YBone(\Match^N)$, so it is enough to show that $(S\otimes S) R(S\otimes S)$ is additive charge conserving if $R$ is.   But notice that $S\otimes S$ interchanges the spaces $V^{k}=\C\{\ket{ij}:i+j=k\}$ and $V^{2N+2-k}$, which are preserved by $R$, so each $V^{k}$ is invariant under $(S\otimes S) R (S\otimes S)$. 

 On the other hand, it is straightforward to prove that if an inner autoequivalence of $\YBone(\Mat^N)$ preserves $\Matcha^N(2,2)$ then it must be of the above form. Indeed, there exists an $M\in\Matcha^N(2,2)$ such that for {every} quadruple $(i,j,a,b)$ with $i+j=a+b$ we have $M_{i,j}^{a,b}\neq 0$.  Then $M$ preserves $V_k:=\C\{\ket{ij}: i+j=k\}$ for all $1\leq k\leq N$.  Since any $Q$ such that $Q\otimes QM(Q\otimes Q)^{-1}\in\Match^N(2,2)$ is monomial, then the dimension $Q\otimes QV_k$ must be the same as $\dim(V_k)$.  Thus $Q\ket{1}$ must be either $\ket{1}$ or $\ket{N}$, since the only two 1-dimensional $V_k$ are for $k=2$ and $k=2N$.  An easy induction on these two cases shows that $Q$ must be either a diagonal matrix times the identity matrix or a diagonal matrix times the order two matrix above.  It is conceivable (although unlikely) that the the smaller set of Yang-Baxter matrices that lie in $\Matcha^N$ have a larger stabiliser, but lacking a classification we cannot rule this out.

\mdef   \label{pa:X}
For $\X\subset\Mat^N$, $\YBone(\X)$ may have other natural symmetries, beyond those that come from stabilising inner autoequivalences of $\YBone(\Mat^N)$ of the form $\Phi_Q$.  One example is the following: in \cite{MR1X}, it is shown that if $(1,R)\in \YBone(\Match^N)$ then $(1,XRX^{-1})\in \YBone(\Match^N)$ for any invertible \emph{diagonal} $X\in\Mat^N(2,2)$.  However, this does not generally lift to a inner autoequivalence of $\YBone(\Mat^N)$.  Indeed, for $f\in\Hom((1,R),(1,S))$ we would need to define $F(f)$ an isomorphism from $XRX^{-1}$ to $XSX^{-1}$.  A condition that would suffice is if $F(f)\otimes F(f)=X^{-1}f\otimes fX$, but if $X$ is not a Kronecker product of some fixed matrix $Y$, this is not possible to accomplish uniformly.  In \cite{MR1X} this was called $X$-symmetry.  We do not know, but 
{based on low-rank testing}
strongly suspect, that $(1,R)$ and $(1,XRX^{-1})$ are $\infty$-equivalent, i.e., correspond to isomorphic objects in $\MoonFun^1(\Bcat,\Mat^N)$, {and, in fact, may be implemented as an inner autoequivalence of $\MoonFun^1(\Bcat,\Mat^N)$.}
Concretely, we have:
\begin{conjecture}\label{conj: X is natural}
Let $N \in \N$. 
    Let $(1,R)\in \YBone(\Match^N)$, $X$ a diagonal matrix in $\Mat^N(2,2)$ and define $S=XRX^{-1}$
{(recall from (\ref{pa:X}) or \cite{MR1X}  that this means that $S$ is a braid representation)}.  
    Then there exists a sequence of diagonal matrices $A_n\in \Mat^N(n,n)$ {independent of $R$} 
{(depending only on $X$)}
    such that $A_n\rho_n^R=\rho_n^S A_n$ {for all $n$}.  
   Thus, the assignment $R\mapsto XRX^{-1}$ lifts to an inner autoequivalence of $\MoonFun^1(\Bcat,\Match^N)$.
{ And in particular to an $\infty$-equivalence.} 
\end{conjecture}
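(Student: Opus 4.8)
The plan is to produce the matrices $A_n$ by a single closed formula and then verify the intertwining relation one braid generator at a time; no recursion, fixed point, or classification input is needed. Write $X\ket{ij}=x_{ij}\ket{ij}$ with every $x_{ij}\neq 0$, put $S:=XRX^{-1}$, and set $r(a,b):=x_{ab}/x_{ba}$ for $a\neq b$, so that $r(a,b)r(b,a)=1$. For a word $w=w_1\cdots w_n\in\ul{N}^n$ let $s_iw$ be the word with the entries in positions $i,i+1$ transposed, and define the diagonal matrix $A_n\in\Match^N(n,n)$ by
\[
A_n\ket{w}=a_w\ket{w},\qquad a_w:=\prod_{\substack{1\le j<k\le n\\ w_j>w_k}} r(w_j,w_k),
\]
the product being over the inversions of $w$ for the natural order on $\ul{N}$. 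This $A_n$ depends only on $X$, is diagonal (hence charge conserving) and invertible, and is the candidate asserted by the conjecture.

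First I would reduce to generators. Since $\rho_n^R$ and $\rho_n^S$ are homomorphisms, it suffices to prove $A_n\rho_n^R(\sigma_i)=\rho_n^S(\sigma_i)A_n$ for $1\le i\le n-1$; and because $\rho_n^S(\sigma_i)=X_i\,\rho_n^R(\sigma_i)\,X_i^{-1}$ with $X_i:=\Id_N^{\otimes i-1}\otimes X\otimes\Id_N^{\otimes n-i-1}$, this is equivalent to $[\,X_i^{-1}A_n,\ \rho_n^R(\sigma_i)\,]=0$. Now I would invoke charge conservation of $R$: the operator $\rho_n^R(\sigma_i)$ is block diagonal, preserving each line $\C\ket{w}$ with $w_i=w_{i+1}$ and each plane $\C\ket{w}\oplus\C\ket{s_iw}$ with $w_i\neq w_{i+1}$. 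Hence a diagonal matrix $\operatorname{diag}(d_w)$ commutes with it whenever $d_w=d_{s_iw}$ for all $w$ with $w_i\neq w_{i+1}$; this sufficient condition does not mention $R$, which is precisely what yields the $R$-independence in the statement. Since $X_i^{-1}A_n$ acts by $\ket{w}\mapsto x_{w_iw_{i+1}}^{-1}a_w\ket{w}$, the condition becomes the single identity
\[
a_{s_iw}=r(w_{i+1},w_i)\,a_w\qquad\text{for all }i\text{ and all }w\text{ with }w_i\neq w_{i+1}.
\]

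So the whole problem is reduced to checking this identity for the explicit $a_w$ above, which is a routine accounting of inversions: transposing the entries at positions $i,i+1$ leaves the multiset of inversion contributions unchanged except at the pair $(i,i+1)$ itself (the pairs $(j,i),(j,i+1)$ for $j<i$, and likewise $(i,k),(i+1,k)$ for $k>i+1$, merely exchange their contributions), and the pair $(i,i+1)$ toggles between contributing nothing and contributing $r(w_{i+1},w_i)$, using $r(a,b)r(b,a)=1$ in the case where it was previously an inversion. Notably no braid-relation consistency check is required, because the factors $r(\cdot,\cdot)$ are scalars and commute — this is exactly why the naive idea of transporting $a_w$ along adjacent transpositions of distinct entries has no obstruction.

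Finally I would package this as an inner autoequivalence of $\MoonFun^1(\Bcat,\Match^N)$. By \cite{MR1X} (see \ref{pa:X}) conjugation by the diagonal $X$ sends charge conserving braid representations to charge conserving braid representations, so $\Psi(F_R):=F_{XRX^{-1}}$ is a well-defined assignment on objects; on a morphism $\eta\colon F_R\to F_{R'}$ (components $\eta_n\in\Match^N(n,n)$) set $\Psi(\eta)_n:=A_n\eta_nA_n^{-1}$. The intertwining relation just established, valid for \emph{every} charge conserving $R$ with the \emph{same} $A_n$, makes $\Psi$ a functor, and for each $F_R$ the family $(A_n^{-1})_n$ is an isomorphism $\Psi(F_R)\to F_R$, natural in $F_R$ (the naturality square is the tautology $A_n^{-1}(A_n\eta_nA_n^{-1})=\eta_nA_n^{-1}$); hence $\Psi$ is inner and, in particular, $A_n$ realizes an $\infty$-equivalence between $F_R$ and $F_{XRX^{-1}}$. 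I expect no serious obstacle here: the only real content is the inversion identity. The one place to be careful is keeping straight which of $x_{ij},x_{ji}$ appears on which side and confirming that the sufficient condition $d_w=d_{s_iw}$ is genuinely $R$-independent, so that one fixed $A_n$ works simultaneously for all of $\YBone(\Match^N)$; a secondary check is that everything produced — $XRX^{-1}$, the $A_n$, the $A_n^{-1}$ — actually lies in $\Match^N$, which is immediate since diagonal conjugation and diagonal matrices preserve charge conservation.
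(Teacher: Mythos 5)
Your argument is correct, and it establishes more than the paper does: the statement is left as a \emph{conjecture} there, proved only for $N=2$ (via the explicit tensor-factorised intertwiner $A=A(1)\otimes\cdots\otimes A(n)$ with $A(i)=\mathrm{diag}\bigl((y/x)^{n-i},1\bigr)$) and verified by computer for $N=3$ up to $n=5$. Your closed formula $a_w=\prod_{j<k,\;w_j>w_k}r(w_j,w_k)$ with $r(a,b)=x_{ab}/x_{ba}$ handles all $N$ at once, and both pillars of the argument check out. First, since $R\in\Match^N(2,2)$ sends $\ket{ab}$ into $\C\ket{ab}\oplus\C\ket{ba}$, the operator $\rho_n^R(\sigma_i)$ preserves each line $\C\ket{w}$ with $w_i=w_{i+1}$ and each plane $\C\ket{w}\oplus\C\ket{s_iw}$ with $w_i\neq w_{i+1}$, so a diagonal matrix that is constant on these blocks commutes with it for \emph{every} charge conserving $R$; this is precisely where the $R$-independence of $A_n$ comes from, and reducing the intertwining relation to $[X_i^{-1}A_n,\rho_n^R(\sigma_i)]=0$ is legitimate because $X_i$ and $A_n$ are both diagonal. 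Second, the inversion identity $a_{s_iw}=r(w_{i+1},w_i)\,a_w$ is exactly as you describe: the pairs $(j,i),(j,i+1)$ and $(i,k),(i+1,k)$ exchange their contributions, only the pair $(i,i+1)$ toggles, and $r(a,b)r(b,a)=1$ closes the case where it was previously an inversion. Your $A_n$ differs from the paper's $N=2$ intertwiner by a factor that is constant on each charge sector (both satisfy the same ratio condition $a_{s_iw}/a_w=r(w_{i+1},w_i)$), which is harmless, and it is consistent with the paper's remark that for $N\geq 3$ the $A_n$ do not factorise into local tensor factors. The packaging as an inner autoequivalence coincides with the paper's own sketch ($\Psi(\eta)_n=A_n\eta_nA_n^{-1}$), and the only facts you use without proof --- invertibility of $X$ and that $XRX^{-1}$ again lies in $\YBone(\Match^N)$ --- are part of the conjecture's hypotheses (the latter also follows from your intertwining relation at $n=3$). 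In short, this is a valid proof of Conjecture \ref{conj: X is natural} in full generality, by a global inversion-counting construction that the paper does not possess.
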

To assemble this to an inner autoequivalence of $\MoonFun^1(\Bcat,\Mat)$ we define $\Psi(f)_n=A_nfA_n^{-1}$ for any natural isomorphism $f:F_R\rightarrow F_S$ with $F_R(\sigma)=R$ and $F_S(\sigma)=S$.

In fact, this appears to be a property of $\Match^N$, 
as it doesn't seem to rely on the Yang-Baxter equation. In the case $N=2$ we have the following:
\begin{lemma} Let $R:=\left[ \begin {array}{cccc} \alpha&0&0&0\\ \noalign{\medskip}0&a&b&0
\\ \noalign{\medskip}0&c&d&0\\ \noalign{\medskip}0&0&0&\beta
\end {array} \right]
\in\Match^2(2,2)$  and $X=diag(w,x,y,z)$. 
Fix $n \in \N$. Define $S=XRX^{-1}$ and 
$A(i)   
:=diag(\left(\frac{y}{x}\right)^{n-i},1) \in \Match^2(1,1)$.  
Then setting $R_i=\Id_2^{\otimes i-1}\otimes  R\otimes \Id_2^{\otimes n-i-1}$ and $S_i=\Id_2^{\otimes i-1}\otimes  S\otimes \Id_2^{\otimes n-i-1}$ and 
$A 
= A(1)\otimes A(2) \otimes \cdots \otimes A(n)$ 
(all in $\Match^2(n,n)$) 
we have 
$A R_i A^{-1}=S_i$ 
for all $1\leq i\leq n-1.$

\end{lemma}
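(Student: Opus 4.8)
The plan is to compute $S$ explicitly, reduce the identity $AR_iA^{-1}=S_i$ to a commutation statement about a diagonal matrix, and settle that by a single exponent count. Throughout set $t:=y/x$, which is nonzero since $X$ is invertible. First I would compute $S=XRX^{-1}$: because $\ket{11}$ and $\ket{22}$ are eigenvectors of $R$, conjugating by the diagonal $X$ fixes $\alpha,\beta$ and merely rescales the central block, giving
\[
S=\left[\begin{array}{cccc}\alpha&0&0&0\\0&a&b/t&0\\0&ct&d&0\\0&0&0&\beta\end{array}\right],
\]
so $S$ depends on $X$ only through $t$ (indeed one could first replace $X$ by $\mathrm{diag}(1,1,t,1)$ after absorbing the part of $X$ commuting with $R$). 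Since conjugation commutes with tensoring by identity factors, $S_i=X_iR_iX_i^{-1}$ where $X_i:=\Id_2^{\otimes i-1}\otimes X\otimes\Id_2^{\otimes n-i-1}$; hence $AR_iA^{-1}=S_i$ is equivalent to the assertion that the diagonal matrix $D_i:=X_i^{-1}A$ commutes with $R_i$, and it is this that I would verify for each $1\le i\le n-1$.

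Next I would record how the relevant operators act on the monomial basis $\{\ket w:w\in\{1,2\}^n\}$. From $A=A(1)\otimes\cdots\otimes A(n)$ with $A(j)=\mathrm{diag}(t^{\,n-j},1)$ one gets $A\ket w=t^{\lambda(w)}\ket w$ where $\lambda(w):=\sum_{j:\,w_j=1}(n-j)$, while $X_i\ket w=x_{w_iw_{i+1}}\ket w$ in terms of the diagonal entries $x_{11},x_{21},x_{12},x_{22}$ of $X$ (so $x_{21}=x$ and $x_{12}=y$). Thus $D_i$ is diagonal with $D_i\ket w=x_{w_iw_{i+1}}^{-1}t^{\lambda(w)}\ket w$. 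Now $R_i$ preserves each block of basis words agreeing outside positions $i,i+1$, acting there as the matrix $R$, so a diagonal operator commutes with $R_i$ exactly when, on each such block, it assigns the same scalar to $\ket w$ with $(w_i,w_{i+1})=(2,1)$ and to the word $\ket{w'}$ obtained by switching this pair to $(1,2)$ (forced by the off-diagonal entries $b,c$, and vacuous if $b=c=0$). Passing from $w$ to $w'$ moves the single ``$1$'' among sites $i,i+1$ from site $i+1$ to site $i$, hence $\lambda(w')-\lambda(w)=(n-i)-(n-i-1)=1$, so
\[
D_i\ket{w'}=x_{12}^{-1}t^{\lambda(w)+1}\ket{w'}=y^{-1}\tfrac{y}{x}\,t^{\lambda(w)}\ket{w'}=x^{-1}t^{\lambda(w)}\ket{w'}=x_{21}^{-1}t^{\lambda(w)}\ket{w'},
\]
which is the scalar that $D_i$ assigns to $\ket w$. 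This yields the commutation, whence $AR_iA^{-1}=X_iR_iX_i^{-1}=S_i$ for all $1\le i\le n-1$.

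There is no deep obstacle here: the only thing requiring care is the revlex/Kronecker bookkeeping, together with confirming that the exponent $n-j$ in $A(j)$ — rather than some other affine function of $j$ — is precisely the choice that makes an adjacent transposition $(2,1)\leftrightarrow(1,2)$ change $\lambda$ by exactly $1$, which is what forces the two middle diagonal entries of each $D_i$-block to agree for \emph{every} $i$ simultaneously. As a consistency check I would also note that the argument uses neither the Yang--Baxter equation nor invertibility of $R$, only invertibility of $X$; this is consistent with the remark in the text that the phenomenon ``does not seem to rely on the Yang--Baxter equation''.
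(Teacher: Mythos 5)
Your proof is correct and follows essentially the same route as the paper's: both reduce to checking that diagonal conjugation leaves the diagonal entries of $R_i$ untouched and rescales the $(2,1)\leftrightarrow(1,2)$ off-diagonal entries, and that the factor $(y/x)^{n-i}\cdot(y/x)^{-(n-i-1)}=y/x$ picked up from $A$ matches the factor introduced by conjugation by $X$. Your repackaging of the identity as the statement that the diagonal matrix $X_i^{-1}A$ commutes with $R_i$ is a tidy but equivalent reorganisation of the paper's direct entry-by-entry comparison.
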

\begin{proof}
    First observe that conjugating $R_i$ by a diagonal matrix only changes off-diagonal entries. We denote by $\ket{1}$ and $\ket{2}$ the standard basis for $\C^2$.  In this notation we have $\bra{12}S\ket{21}=\frac{by}{x}$ and $\bra{21}S\ket{12}=\frac{cx}{y}$.  Thus, for example, 
    \[\bra{j_1\cdots j_{i-1}12j_{i+2}\cdots j_n}S_i\ket{j_1\cdots j_{i-1}21j_{i+2}\cdots j_n}=\frac{by}{x}.\]
Now since 
$A(i) \ket{1}=\left(\frac{y}{x}\right)^{n-i}\ket{1}$ 
and 
$A(i) \ket{2}=\ket{2}$ we compute:
\begin{eqnarray*}
   && \bra{j_1\cdots j_{i-1}12j_{i+2}\cdots j_n}
 A R_i A^{-1} 
   \ket{j_1\cdots j_{i-1}21j_{i+2}\cdots j_n}
   =\\&&\bra{12}
    A(i)\otimes A(i+1) R (A(i)   \otimes    A(i+1))^{-1}
  \ket{21}
  =\left(\frac{y}{x}\right)^{n-i}b\left(\frac{y}{x}\right)^{i+1-n}
  =\frac{by}{x}.
\end{eqnarray*}  
   A similar computation shows that $S_i$ and 
   $A R_i A^{-1} $  
   coincide on vectors of the form \[\ket{j_1\cdots j_{i-1}12j_{i+2}\cdots j_n}.\] Since the remaining entries of $R_i$ are diagonal, this completes the proof.
\end{proof}
This immediately implies:
\begin{corollary} Conjecture \ref{conj: X is natural} is true for $N=2$.
\end{corollary}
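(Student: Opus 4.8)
The plan is to observe that the preceding lemma already carries all the content, so the task is purely one of repackaging. First, for fixed $n$ set $A_n := A(1)\otimes\cdots\otimes A(n)$ as in that lemma; crucially $A_n$ is diagonal and depends only on $X$ (through the ratio $y/x$) and on $n$, not on $R$. The lemma gives $A_n R_i A_n^{-1} = S_i$ for every $1\le i\le n-1$, where $R_i = \rho_n^R(\sigma_i)$ and $S_i = \rho_n^S(\sigma_i)$ by \eqref{eq:matrep}; since the $\sigma_i$ generate $B_n$ and conjugation by $A_n$ is a homomorphism of $GL_{2^n}(\C)$, this upgrades at once to $A_n\,\rho_n^R(\beta) = \rho_n^S(\beta)\,A_n$ for all $\beta\in B_n$. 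As $A_n$ is diagonal it lies in $\Match^2(n,n)$, and it is invertible because $X$ is. This already establishes the first assertion of Conjecture~\ref{conj: X is natural} for $N=2$.

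The remaining clauses are then formal. For $\infty$-equivalence: by the definition in \S\ref{ss:p} it suffices to exhibit invertible $T_n$ with $T_n\,\rho_n^S = \rho_n^R\,T_n$ for all $n$, and $T_n := A_n^{-1}$ works by the identity just obtained. Equivalently, $\{A_n^{-1}\}$ is the component data of a natural isomorphism $F_R\to F_S$ (where $S=XRX^{-1}$); since each component is charge-conserving and invertible, this natural isomorphism lives in $\MoonFun^1(\Bcat,\Match^2)$, a fortiori in $\MoonFun(\Bcat,\Mat)$, which is precisely $\infty$-equivalence.

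For the inner-autoequivalence clause I would apply the recipe described in \S\ref{ss:local}: take the object assignment $\tau(F_R) = F_{XRX^{-1}}$ (well defined since conjugation by the diagonal matrix $X$ preserves charge conservation), and for each $F_R$ choose the natural isomorphism $\xi_{F_R}\colon F_R\to F_{XRX^{-1}}$ in $\MoonFun^1(\Bcat,\Match^2)$ with components $A_n^{-1}$ constructed above. Defining $\Xi$ by $\Xi(F_R)=F_{XRX^{-1}}$ on objects and $\Xi(\varphi)=\xi_{F_{R'}}\,\varphi\,\xi_{F_R}^{-1}$ on morphisms $\varphi\colon F_R\to F_{R'}$ then produces, by construction, an inner autoequivalence of $\MoonFun^1(\Bcat,\Match^2)$ lifting $R\mapsto XRX^{-1}$; here the $R$-independence of the $A_n$ is exactly what makes the components of $\Xi(\varphi)$ automatically land in $\Match^2$ and makes $\Xi$ functorial on the nose.

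I expect no real obstacle: the genuine work — producing diagonal intertwiners in closed form and noting that the \emph{same} ones serve for every $R$ — is what the preceding lemma does. The only points deserving a line of verification are that the repackaged data stays inside $\Match^2$ (true, since everything in sight is diagonal or a diagonal conjugate) and that $\Xi$ respects composition and identities (immediate from its definition in terms of the $\xi_{F_R}$).
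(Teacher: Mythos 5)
Your proposal is correct and takes essentially the same route as the paper, which simply states that the preceding lemma ``immediately implies'' the corollary: you supply the routine details (the $\sigma_i$ generate $B_n$, conjugation by the $R$-independent diagonal $A_n$ is a homomorphism, and the assembly into an inner autoequivalence follows the recipe the paper gives earlier for $\MoonFun^a(\Bcat,\X)$). Nothing in your write-up deviates from or adds to the paper's intended argument beyond making its ``immediate'' step explicit.
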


\begin{remark}
    A computer calculation verifies the validity of Conjecture \ref{conj: X is natural} for $N=3$ up to $n=5$.  However, the corresponding diagonal matrices $A_n$ do not factorise into local factors.  Note also that there is no requirement that the $A_n$ be diagonal for the $X$ symmetry to correspond to a natural isomorphism, \emph{per se} but the computational experiments suggest this simplification.
\end{remark}

\mdef 
{Question: Is there a remnant of this in higher levels? I.e. when $F(1)>1$.} Does X symmetry generalise? If so, is it an inner autoequivalence?

\ignore{{
Since $\MonFun(\Bcat,\Mat^N)$ is a subcategory of $\Fun(\Bcat,\Mat^N)$ one may also ask which isomorphisms in $\Fun(\Bcat,\Mat^N)$, yield auto-equivalences of $YB(\X)\subset YB(\Mat^N)$.  That is, we may drop the requirement that a natural isomorphism be monoidal, but still require that the image be in $\MonFun(\Bcat,\X)$.  This could then be used to classify members of $YB(\X)$ by their corresponding representations of $B_n$, since such a natural isomorphism is given by a family $T_n\in\Aut^N(n,n)$ of matrices such that $(N,T_2RT_2^{-1})\in YB(\X)$.  Of course we do not know if the $B_n$ representations associated with $(N,R)$ and $(N,T_2RT_2^{-1})$ are equivalent for all $n$, since the $T_n$ are not required to be related for distinct $n$.  We may also stratify these in a manner similar to $p$-equivalence described above.  Namely we can define the \emph{$p$ supported automorphisms}  of $YB(\X)$ to be those that have $T_n=\Id$ for all $n>p$.  Of greatest interest are the $2$-supported automorphisms.  For example it is shown in \cite{MartinRowellTorzewska} that taking $T_2=D\in\Mat^N(2,2)$ a diagonal matrix and $T_n=\Id$ for all $n>2$ yields a $2$-supported automorphism of $YB(\Match^N)$.
}}

\subsection{Continuity and varieties of solutions}   $\;$ 

Ultimately, good notions of equivalence of braid representations (and indeed in general) are those that help to organise and understand. 
Such notions can of course extend beyond the artificial constraints of an extrinsically arbitrary choice of categorical context.   
As many of our examples show, 
braid representations  
often arrive in varieties - i.e. in sets 
indexed by free or almost free complex parameters (parameters selectable from Zariski-open sets). 
For a moment let us write $F_x \colon\Bcat \rightarrow \Mat$ for such a variety, 
where the $x$ represents a suitable suite of parameters. 
If the resultant representation theory depends continuously on the parameters then it is natural to classify the whole variety together, even though different points in the variety cannot in general be `intertwined' (indeed the spectrum of $F_x(\sigma)$ may vary with $x$). 
For example if the representation theory is semisimple then all the algebras arising will be isomorphic as algebras (if not isomorphic as braid representations), since isomorphism classes of semisimple algebras are discrete, and continuous variations of discrete variables are necessarily constant. 

This aspect of classification leads to profound differences in framework, for example if we restrict the source. An obvious example is restricting from $\Bcat$ to $\Sym$. In $\Sym$ the spectrum of $F(\sigma)$  is restricted to $\{1,-1\}$, which massively reduces the possible complexity of varieties. 
We will return to this point in \S\ref{s:classifications}, where we will have a helpful profusion of illustrative examples. 
For now we observe only that the symmetry of overall scaling (if $F(\sigma)$ is a solutions then $F'(\sigma) = 3F(\sigma)$, and so on, is also a solution) is a very simple example of this type.

\subsection{Universality}  \label{ss:universal}

Given a classification problem, a natural question that arises is what notion of equivalence is to be applied - and then the problem is replaced by the problem of classifying up to the given equivalence. 
Sometimes it will be obvious what the appropriate notion of equivalence is. Sometimes not.
If not, then we observe that there are two extremal cases: {\em discrete}, in which the relation is equality; and {\em trivial}, in which the whole set is one class. The latter makes classification of classes easy, but the classification is useless. The former is useful in principle, but the classification problem takes its hardest form. 
In our case 
(unlike `ordinary'  
representation theory of algebras)
there is not a canonical notion of equivalence other than discrete. However there are several possible notions of equivalence that may lie in the sweet-spot of utility while still making classification easier. 

Suppose we pick one of these notions. Now for each target restriction there is a natural question: does the target contain a transversal of classes. 
In this case we say that the target is {\em universal}. 

There is another related notion of universality specific to our 
(higher but) linear representation theoretic setting. 
Here in general we are studying representations $F:C \rightarrow D$ where $C$ is a quotient of 
the linearisation of $\Bcat$ - so each $C(n,n)$ is an algebra.
We say that a target is {\em (representation)-universal} if every algebra 
$C(n,n)$ is faithfully represented  by some $D$-rep.
(The essential point here is a computational-physics one: our target is universal if it 
captures every eigenvalue in the spectrum of every operator.
For this it is sufficient that every irreducible representation arises; and 
faithfulness is sufficient for this.)

It is convenient to illustrate with an example directly relevant to our case
- classifying braid representations $F:\Bcat\rightarrow \Mat$. 
Indeed the following example illustrates several aspects of our case. 
In particular we have the strategy of simplifying the problem by restricting the target,
and we have also discussed restricting the source. 

\newcommand{\Hcat}{{\mathsf{H}}}

An example of restriction of source is to pass to the Hecke category $\Hcat(q)$ - here $q \in \C$. It is the same problem, except that one imposes also a quadratic relation on $\sigma$, depending on $q$. Notice here that any classification of braid representations passes relatively straightforwardly to a classification of Hecke representations --- simply remove all solutions that fail to satisfy the new condition. Thus Hecke is a nice sub-problem of our main problem. 
(Even nicer, from the computational-physics perspective, are the sequence of further quotients - each quotienting by a fixed Young $q$-antisymmetriser - sometimes called TLM categories \cite{Brzezinski1995}. For example the rank 2 case is the diagonal Temperley--Lieb category itself.) 

This can also be viewed as a target restriction. Simply restrict to $R$-matrices with the appropriate two-eigenvalue spectrum. This is not a restriction to a target category, but we already have several examples of this kind of non-categorical restriction. 

Observe that $\Hcat(q)$ is again diagonal. It consists of all the Hecke algebras $H_n(q)$. 
As such we have $\Hcat(q)$ representation-universality for the CC/$\Match$ categories taken holistically.

\mdef {\bf{Theorem}}.  Every Hecke algebra $H_n(q)$ is faithfully represented by 
some CC representations, i.e. by some $F:\Bcat\rightarrow\Match^N$ for some $N$. 

\medskip 

\noindent {\em Proof}. 
For $n \in \N$ write $e_n \in H_n(q)$ for the preidempotent (defined up to scalars) that induces the $q$-alternating representation (N.B. for $q\in\C$ a root of unity the preidempotent is not generally normalisable as an idempotent). 
Observe that $e_n \in H_m(q)$ for all $m \geq n$ by the inclusion $H_n(q) \hookrightarrow H_m(q)$ which takes generator $g_i$ to $g_i$. 
For $N \in \N$ define the quotient $H^N_n(q) = H_n(q)/e_{N+1}$ (this is the quotient by the ideal generated by $e_{N+1}$ if defined in $H_n(q)$, or by 0 otherwise). 
Then by \cite{Martin91} the $U_q sl_N$ spin-chain representation is CC and is faithful on the quotient $H^N_n(q) = H_n(q)/e_{N+1}$. 
Fix $n \in \N$ and consider $N=n$. 
Of course with $n=N$ then $e_{N+1} = e_{n+1} \not\in H_n(q)$ 
so $H_n(q)/e_{n+1} = H_n(q)$.  \qed

\section{Classifications}\label{s:classifications}

\subsection{All Rank Classifications}

There are only a few situations where a classification is available that covers all ranks, i.e., sizes of matrices.  We describe 2 such \ppmm{that have proven particularly useful}.

\subsubsection{Charge Conserving braid representations/Yang-Baxter Objects}

A classification of objects $F$ in $\MonFun(\Bcat,\Match^N)$ with $F(1)=1$ (or equivalently objects in $\YBone(\Match^N)$ - see \ref{de:YBC}) \ppmm{is given} for all $N$
in \cite{MR1X}.  
In fact there are two classifications, one for equality as the notion of equivalence, and one for the notion of equivalence described briefly below.   
The outline 
for given $N$ is as follows:
\begin{enumerate}
    \item 
    {One considers}
    equivalence classes up to the following symmetries: $R\mapsto (A\otimes A)R(A\otimes A)^{-1}$, where $A$ is a monomial matrix (see Theorem \ref{thm: outer auto match}) and $R\mapsto XRX^{-1}$ where $X$ is a diagonal $N^2\times N^2$ matrix.  
    In practice we need only consider $A$ a permutation matrix, since supplemented with the second symmetry one obtains all monomial $A$.
    \item 
    Next, we construct `enough' {varieties of} YBOs $R\in\Match^N(2,2)$, parametrised by multisets of bi-coloured composition tableaux {with the  standard left-to-right/top-to-bottom filling}. 
    These are lists of composition tableaux 
    with a total of $N$ boxes such that 1) the filling respects the natural order on $\{1,\ldots,N\}$, and 2) the rows of each tableau are either coloured or left uncoloured with the first row always uncoloured. See (\ref{pa:CCrank2}) for examples.  
    To each such bi-coloured composition tableau 
    is associated an $R\in\Match^N$ where the coloured/uncoloured rows dictate the scalars $R\mid_{\C\{\ket{ii}\}}$ and the relative position of $i\neq j$ within the list/Young diagram dictates the form of $R_{\mid_{\C\{\ket{ij},\ket{ji}\}}}$, \emph{i.e.}  there are three cases corresponding to whether  $i,j$ are in the same/different Young diagram, and same/different row.
    \item 
    Finally, it is shown that any YBO $R\in\Match^N$ is equivalent to one as constructed above, by means of the symmetries described in the second step.
\end{enumerate}

\newcommand{\kkk}{\chi} 
\newcommand{\gammad}{\gamma} 

\mdef \label{pa:CCrank2}
For example, and for use later, the complete list of  
varieties of solutions in rank $N=2$ is
derived from the labels:
\beq  \label{eq:ccN2}
\two, \qquad \oneone, \qquad 
\oneonex,  \qquad 
\square\; \square 
\hspace{.2in} 
\eq 
The usual shorthand labels for these are $F_0$, $F_f$, $F_a$ and $F_/$ respectively. 
Case $F_0$ is the (up to scalar) trivial representation.
Case $F_f$ is the representation corresponding to $U_q sl_2$ symmetry;
and $F_a$ to $U_q sl_{1|1}$. And $F_/$ is the generalised flip. 
Explicitly:
\beq   \label{eq:F/}
F_/ (\sigma) \;  =  \;  
\begin{bmatrix} 
\alpha &&& \\ && \gammad \kkk \\ & 
\frac{\gammad^{}}{\kkk}\\ &&&\beta 
\end{bmatrix}
\; =\; \mat \alpha \\ & \gamma \kkk \\ && \frac{\gammad^{}}{\kkk}\\ &&&\beta \tam
\mat 1 \\ &&1 \\ &1 \\ &&&1 \tam
\eq 

\mdef
With $\MoonFun(\Bcat,\Match^N)$ and hence $\infty$-equivalence in mind, it is worth noting that the 
all-$n$ 
representation theory is completely understood in all the above rank-2 cases - albeit that some of them are highly non-trivial. 
See e.g. \cite{JamesMurphy79,DeguchiMartin92,MartinRittenberg92} for $F_a$; \cite{Martin92} for $F_f$;
and \cite{GalindoRowell} for $F_/$.

\subsubsection{Involutive Yang-Baxter Objects}\label{sss: invol ybo}
In \cite[Theorem 4.8]{LechnerPennigWood} the objects in $\MoonFun(\Bcat,\Invol)$ are classified up to isomorphism, i.e. $\infty$-equivalence.  They show that for each $N$, the isomorphism classes of $N^2\times N^2$ involutive Yang-Baxter objects are in 1-1 correspondence with pairs of Young diagrams with a total of $N$ boxes.  The key point is that any such object corresponds to 
{a $\Sym$ representation as in \ref{de:Sym} (i.e. a functor $F:\Sym \rightarrow \Mat$)}. They also show that $YB(\Invol)$ is closed under both the $\boxplus$ and $\boxtimes$ operations.  See also \cite{Davydov1998,Davydov2000} and paragraph \eqref{2eigs} for related results.

\subsection{Bounded Rank Classifications}  \label{ss:partialclass}
Most classifications in the literature bound the rank.  A few of these are as follows.
\subsubsection{$\YBone(\Mat^2)$}\label{ss:jarmo}
Hietarinta \cite{Hietarinta1992} classified all rank-2 ($4\times 4$) solutions to the constant Yang-Baxter equation up to 
sec.\ref{s:equivandauto}]}
\\ $\bullet$ 
local isomorphism (as in \S\ref{ss:local})
\\ $\bullet$ 
scalar multiples, 
\\ $\bullet$ 
transpose
(as in (\ref{eq:transpose-equiv})), 
\\ $\bullet$ 
conjugation by the $N=2$ swap matrix $\PP=\Fff_2$  (equivalent to passing to the `other' Kronecker convention, as in \ref{de:Farr}, 
\\ $\bullet$ 
the simultaneous reflection across the diagonal and skew diagonal 
(as in (\ref{eq:transpose-equiv})). 

Hietarinta's conventions in \cite{Hietarinta1992} are distinct from ours as he describes $RP$ rather than $R$.  We reproduce the invertible, non-scalar, solutions in terms of our conventions.  
A convenient nomenclature arises if we use the term `glue' for a non-zero matrix entry where the transpose-position entry is zero (a mild abuse of notation from non-semisimple linear representation theory). 
Almost all solutions are then charge-conserving (i.e. in $\Match^2$) together possibly with some glue, and we derive our nomenclature from this property. 
We use the name coming from (\ref{eq:ccN2}), and then append a sub-name indicating glue content where appropriate. 
Thus the first solution variety below is {\em slash-type}; 
the second is DS-equivalent (via $Q=\left[ \begin {array}{cc} 0&p\\ \noalign{\medskip}q&0\end {array}
 \right] 
$) to the $k=s$ case of the first 
(Hietarinta did not use DS-equivalence);
and the third to fifth are {\em slash-glue-types}: 
\ignore{{ 
\[ \left[ \begin {array}{cccc} k&q&p&s\\ \noalign{\medskip}0&0&k&q
\\ \noalign{\medskip}0&k&0&p\\ \noalign{\medskip}0&0&0&k\end {array}
 \right] ,
 \left[ \begin {array}{cccc} k&0&0&0\\ \noalign{\medskip}0&0&p&0
\\ \noalign{\medskip}0&q&0&0\\ \noalign{\medskip}0&0&0&s\end {array}
 \right] ,
 \left[ \begin {array}{cccc} p&0&0&k\\ \noalign{\medskip}0&0&p&0
\\ \noalign{\medskip}0&q&p-q&0\\ \noalign{\medskip}0&0&0&-q\end {array}
 \right],\]\[
 \left[ \begin {array}{cccc} 0&0&0&p\\ \noalign{\medskip}0&k&0&0
\\ \noalign{\medskip}0&0&k&0\\ \noalign{\medskip}q&0&0&0\end {array}
 \right] 
,
 \left[ \begin {array}{cccc} {k}^{2}&-kp&kp&pq\\ \noalign{\medskip}0&0
&{k}^{2}&kq\\ \noalign{\medskip}0&{k}^{2}&0&-kq\\ \noalign{\medskip}0&0
&0&{k}^{2}\end {array} \right], \left[ \begin {array}{cccc} 1&0&0&1\\ \noalign{\medskip}0&0&-1&0
\\ \noalign{\medskip}0&-1&0&0\\ \noalign{\medskip}0&0&0&1\end {array}
 \right]  ,\]\[
\left[ \begin {array}{cccc} 1&0&0&1\\ \noalign{\medskip}0&1&1&0
\\ \noalign{\medskip}0&-1&1&0\\ \noalign{\medskip}-1&0&0&1\end {array}
 \right], \left[ \begin {array}{cccc} {p}^{2}+2\,pq-{q}^{2}&0&0&{p}^{2}-{q}^{2}
\\ \noalign{\medskip}0&{p}^{2}-{q}^{2}&{p}^{2}+{q}^{2}&0
\\ \noalign{\medskip}0&{p}^{2}+{q}^{2}&{p}^{2}-{q}^{2}&0
\\ \noalign{\medskip}{p}^{2}-{q}^{2}&0&0&{p}^{2}-2\,pq-{q}^{2}
\end {array} \right],\]\[
 \left[ \begin {array}{cccc} {k}^{2}&0&0&0\\ \noalign{\medskip}0&0&kq
&0\\ \noalign{\medskip}0&kp&{k}^{2}-pq&0\\ \noalign{\medskip}0&0&0&{k}
^{2}\end {array} \right] , \left[ \begin {array}{cccc} {k}^{2}&0&0&0
\\ \noalign{\medskip}0&0&kq&0\\ \noalign{\medskip}0&kp&{k}^{2}-pq&0
\\ \noalign{\medskip}0&0&0&-pq\end {array} \right] .
\]
}}

\newcommand{\Hslashglue}{
\left[ \begin {array}{cccc} k&q&p&s\\ \noalign{\medskip}0&0&k&q
\\ \noalign{\medskip}0&k&0&p\\ \noalign{\medskip}0&0&0&k\end {array}
 \right] 
 }
\newcommand{\Hslash}{
 \left[ \begin {array}{cccc} k&0&0&0\\ \noalign{\medskip}0&0&p&0
\\ \noalign{\medskip}0&q&0&0\\ \noalign{\medskip}0&0&0&s\end {array}
 \right] 
 }

\newcommand{\Haglue}{
 \left[ \begin {array}{cccc} p&0&0&k\\ \noalign{\medskip}0&0&p&0
\\ \noalign{\medskip}0&q&p-q&0\\ \noalign{\medskip}0&0&0&-q\end {array}
 \right]
 }
 
\newcommand{\HslashDS}{
 \left[ \begin {array}{cccc} 0&0&0&p\\ \noalign{\medskip}0&k&0&0
\\ \noalign{\medskip}0&0&k&0\\ \noalign{\medskip}q&0&0&0\end {array}
 \right] 
}

\newcommand{\Hslashgluex}{
 \left[ \begin {array}{cccc} {k}^{2}&-kp&kp&pq\\ \noalign{\medskip}0&0
&{k}^{2}&kq\\ \noalign{\medskip}0&{k}^{2}&0&-kq\\ \noalign{\medskip}0&0
&0&{k}^{2}\end {array} \right]
}
\newcommand{\Hslashglu}{
\left[ \begin {array}{cccc} 1&0&0&1\\ \noalign{\medskip}0&0&-1&0
\\ \noalign{\medskip}0&-1&0&0\\ \noalign{\medskip}0&0&0&1\end {array}
 \right] 
}

\newcommand{\Hising}{
\left[ \begin {array}{cccc} 1&0&0&1\\ \noalign{\medskip}0&1&1&0
\\ \noalign{\medskip}0&-1&1&0\\ \noalign{\medskip}-1&0&0&1\end {array}
 \right]
}
\newcommand{\Height}{
 \left[ \begin {array}{cccc} {p}^{2}+2\,pq-{q}^{2}&0&0&{p}^{2}-{q}^{2}
\\ \noalign{\medskip}0&{p}^{2}-{q}^{2}&{p}^{2}+{q}^{2}&0
\\ \noalign{\medskip}0&{p}^{2}+{q}^{2}&{p}^{2}-{q}^{2}&0
\\ \noalign{\medskip}{p}^{2}-{q}^{2}&0&0&{p}^{2}-2\,pq-{q}^{2}
\end {array} \right]
}

\newcommand{\Hf}{
 \left[ \begin {array}{cccc} {k}^{2}&0&0&0\\ \noalign{\medskip}0&0&kq
&0\\ \noalign{\medskip}0&kp&{k}^{2}-pq&0\\ \noalign{\medskip}0&0&0&{k}
^{2}\end {array} \right]
}
\newcommand{\Ha}{
\left[ \begin {array}{cccc} {k}^{2}&0&0&0
\\ \noalign{\medskip}0&0&kq&0\\ \noalign{\medskip}0&kp&{k}^{2}-pq&0
\\ \noalign{\medskip}0&0&0&-pq\end {array} \right]
}

\[ \hspace{-.2in}
\Hslash , \HslashDS, \Hslashglu, \Hslashglue, \Hslashgluex, 
\]
\[
\Ha, \Haglue, \hspace{.2in} \Hf, 
\]
\[
\Hising, \;\; \Height 
\]

The middle row of solutions above are the {\em a-type}, {\em a-glue-type} and {\em f-type} respectively. The final row contains the only `anomalies' from CC-with-glue - let us call them {\em Ising-type} and {\em eight-vertex-type} respectively.  

\mdef 
The all-$n$ representation theory of the charge-conserving cases has already been discussed above. We will return to the glue types and eight-vertex in a separate work.
{The braid group representations of the Ising-type solution was analysed in \cite{FRW}. }

\mdef 
Here are the Jordan forms of the \ppmm{generic cases of the} solutions above, in the order above:
\[ \hspace{-.2in} 
\mat k \\ & \sqrt{pq} \\ && \!\! -\!\sqrt{pq} \\ &&&s \tam,
\mat \sqrt{pq} \\ & k \\ && k \\ &&& \!\!-\!\sqrt{pq} \tam,
\mat 1  & 1 \\ &1 \\ &&1 \\ &&& \!\! -1   \tam ,
\mat  k&1& \\ &k&1 \\ &&k \\ &&& \!\! -k \tam ,
\mat k^2 & \\ & k^2 \\ &&k^2 \\ &&& \!\! -k^2 \tam
\]
\[
\mat k^2 \\ &k^2 \\ &&-pq \\ &&&-pq \tam , \mat  p\\&p\\&&-q\\&&&-q \tam, 
\mat k^2 \\ &k^2 \\ &&k^2 \\ &&&-pq \tam 
\]
\[
\mat 1+i \\ & 1+i \\ && 1-i \\ &&& 1-i \tam, 
\mat 2p^2  
\\ &2p^2 
\\ && -2q^2 
\\ &&& -2q^2  
\tam
\]

\begin{remark}
\begin{itemize}
    \item The first two  
        solutions are DS-equivalent when $k=s$, see Lemma \ref{le:DSequiv}.  However, the        
        second solution is indecomposable, while the 
        first is not.  
        \item The  
        slash (first), type-a (sixth) and type-f (eighth) solutions are in $YB(\Match^2)$.  
        \item the  
        seventh solution with $k=0$ has the same form as the 
        sixth solution--both are in $YB(\Match^N)$.  In fact, one may be obtained from the other by conjugating by a diagonal matrix.
        \item The 
        Ising (ninth) solution is unitary after a suitable normalization.  
        \item The  
        third and fourth solutions (two of the three slash-glue-types) are the only non-diagonalisable solutions.
    \end{itemize}
\end{remark}

\subsubsection{$YB(\PermMat^N)$, $N\leq 10$}
Objects in $YB(\PermMat^N)$ are the same as bijective set-theoretic solutions to the Yang-Baxter equation \cite{ESS}, i.e., bijective maps $r\in\Aut(X\times X)$ for some finite set $X$ so that $r_1:=r\times \id_X$ and $r_2:=\id_X\times r$ satisfy $r_1r_2r_1=r_2r_1r_2$.   It is standard to define, for each $x,y\in X$, functions $\lambda_x$ and $\rho_y$ on $X$ by $r(x,y)=(\lambda_x(y),\rho_y(x))$, and consider only those $r$ for which $\lambda_x$ and $\rho_y$ are bijections--these $r$ are called \emph{non-degenerate}.  A further reduction is to require that $r^2=\id_{X\times X}$, i.e. involutive set-theoretic solutions.  Etingof, Schedler and Soloviev \cite{ESS} give a classification of all non-degenerate involutive solutions for $N=|X|\leq 8$ up to isomorphism (local equivalences) in $YB(\PermMat^N)$ (although the count is off by 2 for $N=8$). A complete classification for of non-degenerate involutive solutions for $N=9,10$ is found in \cite{Vendraminetal22}, where they also classify the non-involutive non-degenerate solutions for $N=8$, up to isomorphism: there are 422,449,480 such classes, while the involutive non-degenerate solutions form  34,530 isomorphism classes.  
While these do not include any degenerate solutions, such as the identity, note that the involutive solutions must all fit into at most 185 $\infty$-equivalence classes, as in the \cite{LechnerPennigWood} classification.  It would be interesting to understand which of these 185 classes contain a permutation solution.

\subsubsection{$YB(\Matcha^3)$}
A classification of additive charge conserving Yang-Baxter operators of rank $3$ is described in \cite{HMR1}.  There are 6 general classes stratified by the positions of the non-zero entries of the restriction to the span of the $\ket{ij}$ with $i+j=4$, i.e. $\C\{\ket{22},\ket{31},\ket{13}\}$, up to symmetries.  
Two of these classes yield no solutions at all, 
while
the remaining 4 yield a total of 13 distinct varieties of solutions.  In this case the symmetries {used} are: 
1) conjugation by the flip $\PP$, an autoequivalence; 
2) transpose symmetry, an anti-autoequivalence 
and 
3) the order 2 autoequivalence described in Conjecture \ref{conj: Matcha outer}, i.e., corresponding to the transposition $(1\; 3)$.

\section{Discussion}

The categorical perspective 
taken here 
can be brought to bear on many other problems by relaxing some of the conditions we have imposed above. 
Here is a sampling of the situations we have in mind:
\begin{itemize}
   
    \item Our monoidal functors are always strict, and in particular strong.  There are many situations where one has \emph{lax} or \emph{oplax} monoidal functors, where $F(a)\otimes F(b)$ and $F(a+b)$ are not isomorphic.  
    For example, if $X\in\ob(\catC)$ is an object in a (say, strict) braided fusion category we can define a functor from $\Bcat$ to $Vec$ by $F(n):=\End(X^{\otimes n})$ and 
    $F(\sigma)=c_{X,X}\in\End(X^{\otimes 2})$.  Here one has a lax monoidal structure  $J_{n,m}:F(n)\otimes F(m)\hookrightarrow F(n+m)$ where $f\otimes g\in \End(X^{\otimes n})\otimes \End(X^{\otimes m})$ is mapped to $(f\otimes \id_X^{\otimes m})(\id_X^{\otimes n}\otimes g)\in\End(X^{\otimes n+m})$.   One could imagine skeletalising $Vec$ to get a monoidal functor to $\Mat$, but the monoidal product in $\Mat$ may not be the usual one--choosing a consistent basis for each $F(n)$ is one issue.
    \item In \cite{RowellWang12} the concept of a localisation of a sequence of braid 
    {\em group} 
    representations $(\rho_n,V_n)$ is introduced, in which this sequence is related to the representations coming from some $(N,R)\in YB(\Mat)$.  The motivation comes from the problem of simulating the quantum circuits coming from braiding in topological quantum computation directly on a circuit-based model with gate set consisting of a single Yang-Baxter operator. A categorical interpretation of localisation is missing, but would likely require passing to directed system for a linearisation of $\Bcat$. I.e., we first take morphisms to be group algebras $\C[B_n]$ and fix an embedding of $\C[B_n]\hookrightarrow \C[B_{n+1}]$. Then decompose $\Bcat$ as the directed system of categories $\Bcat_n$ each with a single object $\ast_n$ and morphisms $B_n$, connected via the usual embeddings, for $j\leq k$,  $f_{j,k}:B_j\rightarrow B_{k}$ defined by $f_{j,k}(\sigma_i)=\sigma_i$.  Then we may constrain a functor $F:\Bcat\rightarrow\Mat$ to respect these embeddings, after equipping $\Mat$ with a similar decomposition as a directed system of categories.  For example, the usual block matrix embeddings of $\Mat_n$ into $\Mat_{n+1}$ by placing a $1\times 1$ identity block in the lower right would then allow families of representations such as the Burau representation.

\end{itemize}

\medskip  

The abovementioned  categorical perspective on classification has informed our exposition at every stage in this work. Of course there are alternative perspectives that suggest different approaches, and even take the classification programme in different directions. For example the statistical mechanics perspective offers numerous insights, and radical constructions, even while 
continuing to regard  
braid representations as monoidal functors (in statistical mechanical language this includes, for example, working with suitably `categorified' Andrews--Baxter--Forrester-type models rather than quantum spin chains). We will address this in a separate work.

{As a different form of restriction, one could attempt the classification of R-matrices 
(as opposed to $R^{}$-check matrices - in our terms $R\PP$ rather than  $R$) that are upper-triangular. 
Note that the target space here is smaller than the glue target space, 
as mentioned in \S\ref{ss:subcats}. }
\ppmm{Here we 
recall the  rank-3 classification due to 
Hietarinta in \cite{HietarintaUpperTri}.  All higher ranks remain open.}

\bibliographystyle{abbrv}
\bibliography{bib/local.bib,bib/Loopy}

\end{document}